\let\OLDthebibliography\thebibliography
\renewcommand\thebibliography[1]{
  \OLDthebibliography{#1}
  \setlength{\parskip}{4.5pt}
  \setlength{\itemsep}{0pt plus 0.3ex}
}
\newtheorem{theorem}{Theorem}[section]
\newtheorem{definition}[theorem]{Definition}
\newtheorem*{definition*}{Definition}
\newtheorem{proposition}[theorem]{Proposition}
\newtheorem{lemma}[theorem]{Lemma}
\newtheorem*{lemma*}{Lemma}
\newtheorem{remark}[theorem]{Remark}
\newtheorem*{remark*}{Remark}
\newtheorem*{remarks*}{Remarks}
\newtheorem{ass}[theorem]{Assumption}
\newtheorem*{notation*}{Notation}
\newtheorem*{ex*}{Example}
\newtheorem*{exs*}{Examples}
\newtheorem*{app*}{Application}
\newtheorem{conjecture*}{Conjecture}
\def\ts{\thinspace}
\newcommand{\diam}{\blackdiamond}
\title{
Order conditions for sampling the invariant measure of ergodic stochastic differential equations on manifolds
%High order numerical approximation of the invariant measure of ergodic constrained stochastic differential equations
}
\author{
Adrien Laurent\textsuperscript{1} and Gilles Vilmart\textsuperscript{1}
%Georg Gottwald\textsuperscript{1}, Adrien Laurent\textsuperscript{2} and Gilles Vilmart\textsuperscript{2}
}
\begin{document}
\footnotetext[1]{
Universit\'e de Gen\`eve, Section de math\'ematiques, UNI DUFOUR, 24 rue du Général Dufour, CP 64, 1211 Geneva 4, Switzerland. Adrien.Laurent@unige.ch, Gilles.Vilmart@unige.ch.}

\maketitle

\begin{abstract}

We derive a new methodology for the construction of high order integrators for sampling the invariant measure of ergodic stochastic differential equations with dynamics constrained on a manifold.
We obtain the order conditions for sampling the invariant measure for a class of Runge-Kutta methods applied to the constrained overdamped Langevin equation.
The analysis is valid for arbitrarily high order and relies on an extension of the exotic aromatic Butcher-series formalism.
To illustrate the methodology, a method of order two is introduced, and numerical experiments on the sphere, the torus and the special linear group confirm the theoretical findings.

\smallskip
\noindent
{\it Keywords:\,} constrained stochastic differential equations, manifolds, invariant measure, ergodicity, exotic aromatic B-series, order conditions.
\smallskip

\noindent
{\it AMS subject classification (2020):\,} 60H35, 70H45, 37M25, 65L06
 \end{abstract}

%\tableofcontents

\section{Introduction}
%\label{section:Introduction}
\setcounter{footnote}{1}

We consider systems of stochastic differential equations (SDEs) in~$\R^d$ subject to a smooth scalar constraint and a Stratonovich noise of the form
\begin{equation}
\label{equation:projected_SDE_general}
dX(t)=\Pi_\MM(X(t)) f(X(t))dt +\Pi_\MM(X(t)) \Sigma(X(t)) \circ dW(t),\quad X(0)=X_0\in \MM,
\end{equation}
where~$\Pi_\MM\colon\R^d\to\R^{d\times d}$ is the orthogonal projection on the tangent bundle of the manifold~$\MM=\{x\in\R^d,\zeta(x)=0\}$ of codimension~$q$,~$\zeta\colon\R^d\to\R^q$ is a given constraint,~$f\colon\R^d\to\R^d$ is a smooth drift,~$\Sigma\colon\R^d\to\R^{d\times d}$ is a smooth diffusion coefficient and~$W$ is a standard~$d$-dimensional Brownian motion in~$\R^d$ on a probability space equipped with a filtration and fulfilling the usual assumptions.
For simplicity of the analysis, we assume that~$\MM$ is a compact smooth manifold of codimension~$q=1$.
The smoothness and compactness of~$\MM$ guaranty in particular the existence and uniqueness of a solution to~\eqref{equation:projected_SDE_general} with bounded moments for all times~$t>0$.\footnote{The extension to manifolds of any codimension~$q\geq 1$ will be further discussed in Remarks~\ref{remark:weakened_assumptions} and~\ref{remark:extension_RK_methods}.}
In addition, thanks to the projection operator~$\Pi_\MM$, the solution~$X(t)$ lies on~$\MM$ for all~$t>0$.
In the additive noise case where~$\Sigma(x)=\sigma I_d$ with~$\sigma>0$, equation~\eqref{equation:projected_SDE_general} can also be rewritten equivalently with a Lagrange multiplier (see~\cite[Sect.\ts 3.2.4.1]{Lelievre10fec} or~\cite[Sect.\ts 3.3]{Lelievre12ldw}) as
\begin{equation*}
%\label{equation:projected_SDE_Lagrange}
dX(t)=f(X(t))dt +\sigma dW(t)+g(X(t)) d\lambda_t, \quad \zeta(X(t))=0,\quad X(0)=X_0\in \MM,
\end{equation*}
where~$g=\nabla \zeta$ and~$\lambda$ is an adapted stochastic process determined by the equation~$\zeta(X)=0$.

A major motivation of model~\eqref{equation:projected_SDE_general} appears in computational problems in molecular dynamics with the constrained overdamped Langevin equation (obtained in the particular case where~$\Sigma(x)=\sigma I_d$ is a constant homothety),
\begin{equation}
\label{equation:projected_Langevin}
dX(t)=\Pi_\MM(X(t)) f(X(t))dt +\sigma \Pi_\MM(X(t)) \circ dW(t),\quad X(0)=X_0\in \MM,
\end{equation}
with~$\sigma>0$,~$f=-\nabla V$ and~$V\colon\R^d\to\R$ is a smooth potential.
The overdamped Langevin equation is widely used to model the motion of a set of particles subject to a potential~$V$ in a high friction regime.
The possible constraints can be induced for example by strong covalent bonds between atoms, or fixed angles in molecules.
Sampling from the constrained overdamped Langevin equation allows to compute the so-called free energy, which is a key quantity in thermodynamic (see, for instance,~\cite{Ciccotti08pod,Lelievre10fec,Lelievre12ldw} and references therein).
Equations of the form~\eqref{equation:projected_SDE_general} appear naturally when studying conservative SDEs, that is, SDEs possessing an invariant~$H$ conserved almost surely by all realisations of~\eqref{equation:projected_SDE_general}. The solution of conservative SDEs are subject to the constraint~$\zeta(X)=0$ with~$\zeta(x)=H(x)-H(X_0)$.
Drawing samples on a manifold also has many applications in statistics (see~\cite{Brubaker12afo,Diaconis13sfa} and references therein).
%\modg{Equation~\eqref{equation:projected_Langevin} also arises when a set of particles is subject to the stiff potential~$V^\varepsilon=V-\frac{\sigma^2}{2}\ln(\norme{\nabla\zeta})+\frac{1}{2\varepsilon}\zeta^2$. Indeed the stochastic process~$X^\varepsilon$ solution of the penalized dynamic
%\begin{equation*}
%%\label{equation:projected_SDE_varepsilon}
%dX^\varepsilon=-\nabla V^\varepsilon(X^\varepsilon) dt +\sigma dW,\quad X^\varepsilon(0)=X_0,
%\end{equation*}
%converges when~$\varepsilon\to 0$ to the solution of equation~\eqref{equation:projected_Langevin} (see, for instance,~\cite{VandenEijnden03ntf,Ciccotti08pod,Lelievre10fec}). La référence ne colle pas vraiment...}

Under regularity conditions on the generator of the SDE and on~$\MM$, it was shown in~\cite{Ciccotti08pod,Faou09csd} that the solution~$X(t)$ of the SDE~\eqref{equation:projected_SDE_general} is ergodic, that is, there exists a unique invariant measure~$d\mu_\infty$ on~$\MM$ that has a density~$\rho_\infty$ with respect to~$d\sigma_\MM$, the canonical measure on~$\MM$ induced by the Euclidean metric of~$\R^d$, such that for all test functions~$\phi\in \CC^\infty(\R^d,\R)$,
\begin{equation}
\label{equation:ergodic_theorem}
\lim_{T\to\infty}\frac{1}{T}\int_0^T \phi(X(t)) dt= \int_{\MM} \phi(x) d\mu_\infty(x)\quad \text{almost surely}.
\end{equation}
In the case of the overdamped Langevin equation~\eqref{equation:projected_Langevin} on~$\MM$, the process is naturally ergodic and the invariant measure is given by~$d\mu_\infty=\rho_\infty d\sigma_\MM=\frac{1}{Z}\exp\left(-\frac{2}{\sigma^2}V\right)d\sigma_\MM$ with~$Z=\int_{\MM}\exp\left(-\frac{2}{\sigma^2}V\right) d\sigma_\MM$.
Approximating the quantity~$\int_{\MM} \phi(x) d\mu_\infty(x)$ is a computational challenge when the dimension~$d$ is high, which is the case in the context of molecular dynamics where the dimension is proportional to the number of particles, because a standard quadrature formula becomes prohibitively expensive.
We emphasize that~$\mu_\infty$ is singular with respect to the Lebesgue measure on~$\R^d$.
In addition, the integrator samples should remain on the manifold~$\MM$.
Hence, the order conditions for sampling the invariant measure in the Euclidean context of~$\R^d$ do not generalize straightforwardly to the manifold case.
The main goal of this article is to build and analyse high order one-step integrators for approximating~$\int_{\MM} \phi(x) d\mu_\infty(x)$ that lie on the manifold~$\MM$ and that have the form
\begin{equation}
\label{equation:integrator}
X_{n+1}=\Phi(X_n,h,\xi_n),
\end{equation}
where the~$\xi_n$ are standard independent random vectors and~$h$ is the numerical step.

There are different ways to approximate the solution of the SDE problem~\eqref{equation:projected_SDE_general}.
A strong approximation focuses on approaching the realisation of a single trajectory of~\eqref{equation:projected_SDE_general} for a given realisation of the Wiener process~$W$.
A weak approximation approaches the average of functionals of the solution.
%In particular, a numerical approximation~\eqref{equation:integrator} is said to have local weak order~$r$ for approximating the solution of~\eqref{equation:projected_SDE_general} if for all~$\phi\in \CC^\infty(\R^d,\R)$, for all~$x\in\MM$, there exists a positive constant~$C(\phi,x)$ such that
%\begin{equation*}
%\label{equation:local_weak_order}
%\Big|\E[\phi(X_1)|X_0=x]-\E[\phi(X(h))|X_0=x]\Big|\leq C(\phi,x)h^{r+1}.
%\end{equation*}
We focus here on the approximation for the invariant measure, that is approaching averages of functionals of the solution in the stationary state. This convergence is the numerical equivalent of~\eqref{equation:ergodic_theorem}.
%Under ellipticity assumptions, the quantity~$u(x,t)$ converges exponentially fast to~$\int_{\MM} \phi(x) d\mu_\infty(x)$ as~$t\to\infty$, thus we directly approach averages of functionals of the solution in the stationary state.
%We shall also assume that the numerical method~\eqref{equation:integrator} is ergodic, that is, there exists a measure~$d\mu_h$ that has a density with respect to~$d\sigma_\MM$ such that
%\begin{equation}
%%\label{equation:numerical ergodicity}
%\lim_{N\to \infty} \frac{1}{N+1}\sum_{n=0}^N \phi(X_n)=\int_{\MM} \phi d\mu^h\quad \text{almost surely}.
%\end{equation}
The integrator~\eqref{equation:integrator} is said to have order~$p$ for the invariant measure if for all~$\phi\in \CC^\infty(\R^d,\R)$, there exists a positive constant~$C(\phi)$ independent of the initial condition~$X_0$ such that
\begin{equation}
\label{equation:def_error_inv_measure}
e(\phi,h)\leq C(\phi)h^p\quad \text{where} \quad e(\phi,h)=\bigg|\lim_{N\to \infty} \frac{1}{N+1}\sum_{n=0}^N \phi(X_n)-\int_\MM \phi d\mu_\infty\bigg|.
\end{equation}
We recall that a scheme of weak order~$r$ immediately has order~$p\geq r$ for the invariant measure.
For the underdamped and overdamped Langevin dynamics in~$\R^d$, the articles~\cite{BouRabee10lra,Leimkuhler13rco,Abdulle14hon,Abdulle15lta,Leimkuhler16tco} proposed multiple schemes of high order for the invariant measure with low weak order (typically~$r=1$).
We mention in particular the work~\cite{Abdulle14hon} that introduced a methodology for the analysis and design of high order integrators for the invariant measure. This methodology, that relies on Talay-Tubaro expansions~\cite{Talay90eot}, backward error analysis and modified differential equations for SDEs~\cite{Zygalakis11ote,Abdulle12hwo,Debussche12wbe,Kopec15wbea,Kopec15wbeb}, is generalised in the context of manifolds in the present paper.

A widely used and simple numerical scheme for sampling the invariant measure distribution on manifolds is the Euler scheme (see~\cite{Ciccotti05bms,Lelievre08adc,Lelievre10fec,Lelievre12ldw} for instance).
Two variants exist for the overdamped Langevin equation~\eqref{equation:projected_Langevin}, both of order one in the weak sense, or for sampling the invariant measure: the Euler integrator with explicit projection direction
\begin{equation}
\label{equation:EE}
X_{n+1}=X_n+hf(X_n) +\sigma \sqrt{h} \xi_n + \lambda g(X_n),
\quad
\zeta(X_{n+1})=0,
\end{equation}
and alternatively the Euler integrator with implicit projection direction
\begin{equation}
\label{equation:IE}
X_{n+1}=X_n+hf(X_n) +\sigma \sqrt{h} \xi_n + \lambda g(X_{n+1}),
\quad
\zeta(X_{n+1})=0.
\end{equation}
To the best of our knowledge, no high order numerical integrators for sampling the invariant measure of the overdamped Langevin equation with constraints~\eqref{equation:projected_Langevin} have been proposed in the literature.
In~\cite{Lelievre19hmc}, an order two discretization based on the RATTLE integrator (see~\cite{Ryckaert77nio,Andersen83rav,Hairer10sod}) is applied to the underdamped Langevin equation, rather than to the overdamped Langevin dynamic~\eqref{equation:projected_Langevin}.
The previously described discretizations can be combined with Metropolis-Hastings rejection procedures~\cite{Metropolis53eos,Hastings70mcs}. We quote in particular the Markov-Chain Monte-Carlo (MCMC) methods~\cite{Girolami11rml,Brubaker12afo,Lelievre12ldw} and the Hybrid Monte-Carlo methods~\cite{Zappa18mco,Lelievre19hmc}, where the need for a reverse projection check is shown to be a key step.
We also mention the integrators in~\cite{Zhang19eso,Lelievre20mpm} that are based on an Euler discretization and present new approaches for projecting on the manifold.
The alternative approach of using Metropolis-Hastings rejection procedure allows to fully remove the bias on the invariant measure. Analogously to the Euclidean case, this procedure does not make high order discretizations obsolete because, in particular, the rejection rate depends on the quality of the discretization and the dimension of the problem in general, and in the case of stiff problems or problems in high dimension, it suffers from timestep restrictions.
%Then, some quantities in statistical
%physics are useful beyond averages with respect to the invariant measure, such as the mean force (see~\cite{Ciccotti08pod}) or the autocorrelations (which allow to compute transport coefficients using Green-Kubo formulas).
%The accuracy in the approximation of such quantities relies on the weak error of the integrator, and it proved challenging to create Metropolis methods of high weak order.
Note also that in the specific case where~$\MM$ is a Lie group, high order integrators can be naturally obtained using splitting methods, that are, however, typically limited to weak order two of accuracy (see~\cite{Blanes05otn} for further details in the context of ODEs).

This article proposes new tools for constructing integrators of any high order for sampling the invariant measure of constrained SDEs of the form~\eqref{equation:projected_SDE_general} and relies on the formalism of trees and Butcher-series.
%In order to create high order integrators for sampling the invariant measure of constrained SDEs of the form~\eqref{equation:projected_SDE_general}, we use a B-series formalism to tackle the calculations of the order conditions.
Originally introduced by Hairer and Wanner in~\cite{Hairer74otb}, and based on the work of Butcher~\cite{Butcher72aat}, B-series have proved to be a powerful standard tool for the numerical analysis of deterministic differential equations, as presented, for instance, in the textbooks~\cite{Hairer06gni,Butcher16nmf}.
In the last decades, several works extended B-series to the stochastic context. We mention in particular Burrage and Burrage~\cite{Burrage96hso,Burrage00oco} and Komori, Mitsui and Sugiura~\cite{Komori97rta} who first introduced stochastic trees and B-series for studying the order conditions of strong convergence of SDEs, Rö{\ss}ler~\cite{Rossler04ste,Rossler06rta,Rossler06rkm,Rossler10ste,Rossler10saw} and Debrabant and Kv{\ae}rn{\o}~\cite{Debrabant08bsa,Debrabant10rkm,Debrabant11cos} for the design and analysis of high order weak and strong integrators on a finite time interval,~\cite{Anmarkrud17ocf} for creating schemes preserving quadratic invariants, and~\cite{Kupper12ark}, where tree series were applied to a class of stochastic differential algebraic equations (SDAEs) for the computation of strong order conditions.
Finally we mention the recent work~\cite{Laurent20eab}, that introduced the exotic aromatic B-series for the computation of order conditions for sampling the invariant measure of ergodic SDEs in~$\R^d$, and that we extend in this paper to the context of SDEs on manifolds.

This article is organized as follows.
Section~\ref{section:high_order_ergodic_approximation} is devoted to the analysis of the accuracy of integrators for sampling the invariant measure on a manifold~$\MM$.
In Section~\ref{section:Runge_Kutta_section}, we apply this methodology on a class of Runge-Kutta methods for solving the constrained overdamped Langevin equation~\eqref{equation:projected_Langevin}, to derive arbitrary high order conditions for the invariant measure, with special emphasis on order two conditions, and to introduce a new order two scheme that uses only a few evaluations of~$f$ per step.
The detailed calculations of the order conditions for the invariant measure are done in Section~\ref{section:B_series} with the help of an extension of the exotic aromatic B-series formalism~\cite{Laurent20eab}.
%In section~\ref{section:B_series}, we present an extension of the exotic aromatic B-series formalism, and we use it to detail the computations of the order two conditions.
We compare in Section~\ref{section:numerical_experiments} the new order two scheme with the Euler scheme~\eqref{equation:IE} in numerical experiments on a sphere, a torus and the special linear group~$\SL(m)$ to confirm its order of convergence for sampling the invariant measure.

\section{High order ergodic approximation on a manifold}
\label{section:high_order_ergodic_approximation}

In this section, we present a new criterion for building integrators of any order for the invariant measure by extending the~$\R^d$ results in~\cite{Debussche12wbe,Abdulle14hon} to the context of manifolds.
We first settle down a few notations and assumptions, before we recall the standard weak expansions of the exact and numerical solution using the backward Kolmogorov equation.
%Let us settle down a few notations and assumptions.
For~$\zeta\colon \R^d\rightarrow \R$ a smooth map, we denote~$g=\nabla \zeta$ its gradient, and~$G(x)=g^T(x)g(x)=\abs{g(x)}^2$ the Gram function related to the manifold~$\MM=\{x\in\R^d,\zeta(x)=0\}$, where we denote by~$\abs{x}=(x^T x)^{1/2}$ the Euclidean norm in~$\R^d$.
We assume in the rest of the article that~$\MM$ is a compact and smooth manifold of codimension one embedded in~$\R^d$.
We suppose in addition that the Gram function~$G$ is strictly positive on~$\MM$,~$G(x)\geq \alpha>0$ for all~$x\in\MM$.
With these notations, the projection~$\Pi_\MM$ on the tangent bundle is given by~$\Pi_\MM(x)=I-G(x)^{-1}g(x)g(x)^T$.
We denote~$\LL$ the generator of the SDE~\eqref{equation:projected_SDE_general}. It is given, for~$\phi\in\CC^\infty(\R^d,\R)$, by
\begin{equation}
\label{equation:generator_L_general}
\LL\phi
=\phi'(\Pi_\MM f)
+\frac{1}{2}\sum_{i=1}^d \phi'((\Pi_\MM\Sigma e_i)'(\Pi_\MM\Sigma e_i))
+\frac{1}{2}\sum_{i=1}^d \phi''(\Pi_\MM \Sigma e_i,\Pi_\MM \Sigma e_i),
\end{equation}
where~$(e_i)_{i=1, \dots, d}$ is the canonical basis of~$\R^d$ and, for all vectors~$a^1$, \dots,~$a^m\in\R^d$, we use the following notation for differentials in~$\R^d$,
$$\phi^{(m)}(a^1,\dots,a^m)
=\sum_{i_1,\dots,i_m=1}^d \partial_{i_1,\dots,i_m} \phi \ts a^1_{i_1}\dots a^m_{i_m}
=\sum_{i_1,\dots,i_m=1}^d \frac{\partial^m\phi}{\partial x_{i_1}\dots\partial x_{i_m}} \ts a^1_{i_1}\dots a^m_{i_m}.$$
For the overdamped Langevin equation~\eqref{equation:projected_Langevin}, the generator~\eqref{equation:generator_L_general} reduces to
\begin{align}
\label{equation:generator_L_Langevin}
\LL\phi
&=\phi'f
-G^{-1}(g,f)\phi'g
-\frac{\sigma^2}{2}G^{-1}\Div(g)\phi'g
+\frac{\sigma^2}{2}G^{-2}(g,g'g)\phi'g
+\frac{\sigma^2}{2}\Delta \phi\\
&-\frac{\sigma^2}{2}G^{-1}\phi''(g,g)
=\frac{\sigma^2}{2}\exp\Big(\frac{2}{\sigma^2}V\Big)\Div_\MM\Big(\exp\Big(-\frac{2}{\sigma^2}V\Big)\nabla_\MM\phi\Big),\nonumber
\end{align}
where~$\nabla_\MM\psi:=\Pi_\MM \nabla\psi$ and~$\Div_\MM (H):=\Div(H)-G^{-1}(g,H'(g))$.
The adjoint~$\LL^*$ of the generator~\eqref{equation:generator_L_general} in~$L^2(d\sigma_\MM)$ for the SDE~\eqref{equation:projected_SDE_general}, i.e.\ts, the operator that satisfies for all test functions~$\phi$,~$\psi\in \CC^\infty(\R^d,\R)$,
$$
\int_\MM (\LL\phi) \psi d\sigma_\MM=\int_\MM \phi (\LL^*\psi) d\sigma_\MM,
$$
is given by
$$
\LL^*\phi=-\Div_\MM(\phi f)+\frac{1}{2}\sum_{i=1}^d \Div_\MM(\Div_\MM(\phi \Sigma e_i)\Sigma e_i).
$$

\begin{remark}
As~$\LL$ is a self-adjoint operator in~$L^2(d\mu_\infty)$, but not in~$L^2(d\sigma_\MM)$ in general, it could be more natural to perform the analysis in the space~$L^2(d\mu_\infty)$.
However, as we allow the substages of our numerical integrators to explore the open neighbourhood of~$\MM$ in~$\R^d$, we shall work in this paper with differential operators that cannot be rewritten in general with intrinsic derivatives on the manifold~$\MM$.
In addition, performing directly the integration by parts calculations in~$L^2(d\mu_\infty)$ with such operators is not straightforward, and this motivated the choice of~$L^2(d\sigma_\MM)$ for the analysis. A similar choice was done in~\cite{Laurent20eab} in the context of~$\R^d$.
\end{remark}

We follow the framework of~\cite{Faou09csd}.
In particular, we rely on the construction of the local orthogonal coordinates.
In a neighbourhood~$N_\MM$ of the manifold~$\MM$, there exists an atlas of local orthogonal coordinate systems~$(y,z)\in (V\subset\R^{d-1})\times (-\varepsilon,\varepsilon)$ for~$\varepsilon>0$, with respect to local charts~$\psi\colon U\subset N_\MM\rightarrow (V\subset\R^{d-1})\times (-\varepsilon,\varepsilon)$, such that if~$\psi(x)=(y,z)$, then~$z=\zeta(x)$.
We make the following regularity assumption on the generator~$\LL$.
\begin{ass}
\label{assumption:ellipticity}
On an open neighbourhood~$N_\MM$ of~$\MM$ in~$\R^d$, there exists a constant~$C>0$ such that for all~$x \in N_\MM$ and~$(y,z)=\psi(x)$, for all one-form field~$v\colon T\MM\rightarrow\R$ on~$\MM$ of norm one, we have
$$\sum_{i,j=1}^{d-1} \sum_{k=1}^d \widetilde{\Sigma}_{ik}(y,z)\widetilde{\Sigma}_{jk}(y,z) v_i(\widetilde{x}) v_j(\widetilde{x}) \geq C,$$
where~$\widetilde{x}\in\MM$ is such that~$\psi(\widetilde{x})=(y,0)$ and, for~$k=1,\dots,d$,~$(\widetilde{\Sigma}_{ik}(y,z))_i\in \R^{d-1}$ is defined as the restriction of the vector~$(\Pi_\MM(\widetilde{x}) \Sigma_{ik}(x))_i\in \R^d$ to the tangent space~$T_{\widetilde{x}}\MM$ of~$\MM$, rewritten in the local orthogonal coordinate system.
\end{ass}
This assumption is a variant in the manifold case of the uniform ellipticity property of the generator~$\LL$ in the Euclidean context of~$\R^d$.
In addition, Assumption~\ref{assumption:ellipticity} is automatically satisfied for the constrained overdamped Langevin equation~\eqref{equation:projected_Langevin} and yields that the function~$u(x,t)=\E[\phi(X(t))|X(0)=x]$ satisfies the backward Kolmogorov equation (see~\cite{Faou09csd}):
\begin{equation}
\label{equation:Kolmogorov}
\frac{\partial u}{\partial t}(x,t) = \LL u(x,t), \quad u(x,0)=\phi(x),
\quad x\in N_\MM, \quad t>0.
\end{equation}
We refer to~\cite{Kopec15wbea,Kopec15wbeb} for similar results in the context of~$\R^d$.
The backward Kolmogorov equation~\eqref{equation:Kolmogorov} allows us to write the following expansion of~$u(x,h) =\E[\phi(X(h))|X_0=x]$ for~$h$ small enough,
\begin{equation}
\label{equation:dvp_exa}
u(x,h) = \phi(x)+\sum_{j=1}^N \frac{h^j}{j!} \LL^j\phi(x)+ h^{N+1} R_N^h(\phi,x), \quad x\in N_\MM,
\end{equation}
where~$N_\MM$ is an open neighbourhood of~$\MM$ in~$\R^d$ and the remainder satisfies the estimate~$\abs{R_N^h(\phi,x)}\leq C_N(\phi)$ where the constant~$C_N(\phi)$ is independent of~$h$ and~$x$.

We now assume the existence and uniqueness of an invariant measure, as well as an additional regularity property on~$\LL$, in the spirit of~\cite[Hypotheses H1-H2]{Debussche12wbe} in the context of~$\R^d$.
\begin{ass}
\label{assumption:inv_measure_Poisson}
There exists an open neighbourhood~$N_\MM$ of~$\MM$ in~$\R^d$ and a unique positive function~$\rho_\infty\in\CC^\infty(N_\MM,\R)$ satisfying~$\int_\MM \rho_\infty d\sigma_\MM=1$ and~$\LL^*\rho_\infty=0$ on~$N_\MM$.
Moreover, for all~$\phi\in\CC^\infty(N_\MM,\R)$ such that~$\int_\MM \phi d\sigma_\MM=0$, there exists a unique solution~$\rho\in\CC^\infty(N_\MM,\R)$ to the Poisson problem~$\LL^*\rho=\phi$ that satisfies~$\int_\MM \rho d\sigma_\MM=0$.
\end{ass}
The existence and uniqueness of the invariant measure are in particular satisfied for the constrained overdamped Langevin equation~\eqref{equation:projected_Langevin} (see~\cite[Sect.\ts 2.3]{Faou09csd} for further details).
Assumption~\ref{assumption:inv_measure_Poisson} yields the ergodicity of the process~$X(t)$ solution of~\eqref{equation:projected_SDE_general} with the unique invariant measure~$d\mu_\infty=\rho_\infty d\sigma_\MM$ on~$\MM$.
To proceed further, we shall assume that the integrator~\eqref{equation:integrator} is ergodic, that is, there exists a measure~$d\mu_h$ that has a density with respect to~$d\sigma_\MM$ such that
\begin{equation}
\label{equation:ergodicity_num}
\lim_{N\to \infty} \frac{1}{N+1}\sum_{n=0}^N \phi(X_n)=\int_{\MM} \phi d\mu^h\quad \text{almost surely}.
\end{equation}
We refer to~\cite{Talay90sod,Talay96pnm,Mattingly02efs,Talay02shs} in the Euclidian case, and to~\cite{Faou09csd} in the manifold case, and references therein, for further details on the ergodicity of numerical integrators.
In addition, we suppose that~$\E[\phi(X_1)|X_0=x]$, the numerical analog of~$u(x,h)$, can be developed in powers of~$h$ as was done, for instance, in~\cite{Talay90eot,Abdulle14hon} in the context of~$\R^d$.
\begin{ass}
\label{assumption:dvp_num}
For all~$\phi \in \CC^\infty(\R^d ,\R)$, the numerical integrator~\eqref{equation:integrator} has a weak Taylor expansion of the form
\begin{equation}
\label{equation:dvp_num}
\E[\phi(X_1)|X_0=x] = \phi(x) + \sum_{j=1}^N h^j \AA_{j-1}\phi(x)+ h^{N+1}R_N^h(\phi,x),\quad x\in N_\MM,
\end{equation}
for all~$h$ assumed small enough, and where~$N_\MM$ is an open neighbourhood of~$\MM$ in~$\R^d$ and the remainder satisfies~$\abs{R_N^h(\phi,x)}\leq C_N(\phi)$ where the constant~$C_N(\phi)$ is independent of~$h$ and~$x$.
The~$\AA_j$'s,~$j=0,1,2,\ldots$ are linear differential operators with coefficients depending smoothly on~$f$,~$g$ and their (high order) derivatives (and depending on the choice of the integrator).
\end{ass}
Under Assumptions~\ref{assumption:ellipticity} and~\ref{assumption:dvp_num}, by comparing the expansions~\eqref{equation:dvp_num} and~\eqref{equation:dvp_exa}, the integrator has at least weak order~$p$ if~$\AA_{j-1}=\LL^j/j!$ for~$j=1,\dots,p$.
However, as observed already in~$\R^d$, high order for the invariant measure can be achieved in spite of a low weak order. This is the purpose of Theorem~\ref{theorem:operator_conditions_invariant_measure} where we present a new sufficient condition for a scheme to have order~$r$ for the invariant measure. This result, that relies on the powerful tool of backward error analysis for SDEs, is similar to~\cite[Thm.\ts 3.3]{Abdulle14hon} in the context of smooth compact manifolds.
\begin{theorem}
\label{theorem:operator_conditions_invariant_measure}
Under Assumptions~\ref{assumption:ellipticity},~\ref{assumption:inv_measure_Poisson} and~\ref{assumption:dvp_num}, if the numerical scheme is consistent (that is,~$\AA_0=\LL$) and ergodic, and if it satisfies in~$L^2(d\sigma_\MM)$
\begin{equation*}
%\label{equation:algebraic_order_condition_invariant_measure}
\AA_j^*\rho_{\infty}=0, \quad j=1,\dots,r-1,
\end{equation*}
then it has order~$r$ for the invariant measure and the numerical error~\eqref{equation:def_error_inv_measure} satisfies, for~$h\rightarrow 0$,
\begin{align*}
e(\phi,h)&=h^r \int_\MM \phi(x) \rho_r(x) d\sigma_\MM(x)+\OO(h^{r+1})\\
&=h^r\int_0^\infty \int_\MM u(x,t)\AA_r^* \rho_\infty(x) d\sigma_\MM(x) dt+\OO(h^{r+1}),
\end{align*}
where~$\rho_r\in\CC^\infty(N_\MM,\R)$ is the unique solution of the Poisson problem~$\LL^* \rho_r=-\AA_r^*\rho_\infty$ in~$N_\MM$ that satisfies~$\int_\MM \rho_r d\sigma_\MM=0$, with~$N_\MM$ an open neighbourhood of~$\MM$ in~$\R^d$.
\end{theorem}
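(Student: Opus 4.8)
The plan is to determine the density $\rho^h$ of the numerical invariant measure $d\mu^h = \rho^h\,d\sigma_\MM$ by a formal expansion in powers of $h$, matched against the discrete stationarity relation, and then to convert the resulting leading term into the two error formulas. Writing $\mathcal{N}\phi(x)=\E[\phi(X_1)\mid X_0=x]$ for the one-step transition operator, stationarity of $\mu^h$ reads $\int_\MM \mathcal{N}\phi\,d\mu^h=\int_\MM \phi\,d\mu^h$ for every test function $\phi$; inserting the weak expansion~\eqref{equation:dvp_num} and passing to adjoints in $L^2(d\sigma_\MM)$ yields the formal identity $\sum_{j\geq 1} h^j \AA_{j-1}^*\rho^h=0$. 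Substituting the ansatz $\rho^h=\rho_\infty+\sum_{i\geq 1} h^i\rho_i$ and collecting equal powers of $h$ produces a hierarchy whose $(m+1)$-th line is $\LL^*\rho_m+\sum_{k=1}^m \AA_k^*\rho_{m-k}=0$, using $\AA_0=\LL$.

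First I would run the induction on this hierarchy. The $h^1$-line is $\LL^*\rho_\infty=0$, which holds by Assumption~\ref{assumption:inv_measure_Poisson}. Assuming $\rho_1=\cdots=\rho_{m-1}=0$ for some $m\leq r-1$, the $(m+1)$-th line collapses to $\LL^*\rho_m=-\AA_m^*\rho_\infty$; the hypothesis $\AA_m^*\rho_\infty=0$ forces $\LL^*\rho_m=0$, and the uniqueness in Assumption~\ref{assumption:inv_measure_Poisson} together with the normalisation $\int_\MM\rho_m\,d\sigma_\MM=0$ gives $\rho_m=0$. Hence $\rho_1=\cdots=\rho_{r-1}=0$, and the first non-trivial line is exactly the Poisson problem $\LL^*\rho_r=-\AA_r^*\rho_\infty$ defining $\rho_r$. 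Its solvability requires $\int_\MM\AA_r^*\rho_\infty\,d\sigma_\MM=0$, which I would obtain by duality from $\AA_r 1=0$; the latter is itself forced by applying~\eqref{equation:dvp_num} to $\phi\equiv 1$, since $\mathcal{N}1=1$ makes all the $\AA_{j-1}1$ vanish.

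The heart of the argument, and the step I expect to be the main obstacle, is to turn this purely formal expansion into a rigorous statement $\rho^h=\rho_\infty+h^r\rho_r+\OO(h^{r+1})$ with a remainder controlled uniformly in $h$. Because the substages of~\eqref{equation:integrator} explore the neighbourhood $N_\MM$, the operators $\AA_j$, $\LL$ and their adjoints involve ambient derivatives in $\R^d$ rather than intrinsic ones, so the estimates must be carried out in $L^2(d\sigma_\MM)$ over $N_\MM$; this is precisely the role of the neighbourhood formulation of Assumptions~\ref{assumption:inv_measure_Poisson} and~\ref{assumption:dvp_num}. I would fix a truncation order $N>r$, insert the truncated density $\rho_\infty+h^r\rho_r$ into the stationarity identity, and use the uniform remainder bound in~\eqref{equation:dvp_num}, the regularity and solvability of the Poisson problems (Assumption~\ref{assumption:inv_measure_Poisson}), and the ergodicity~\eqref{equation:ergodicity_num} to absorb the higher-order corrections, a backward error analysis in the spirit of~\cite{Debussche12wbe,Abdulle14hon}. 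Granting this, the first formula follows from~\eqref{equation:def_error_inv_measure}: the signed error $\int_\MM\phi\,d\mu^h-\int_\MM\phi\,d\mu_\infty$ equals $\int_\MM\phi\,(\rho^h-\rho_\infty)\,d\sigma_\MM=h^r\int_\MM\phi\rho_r\,d\sigma_\MM+\OO(h^{r+1})$.

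Finally I would recover the semigroup representation of the leading term. Since $\rho_r$ solves $\LL^*\rho_r=-\AA_r^*\rho_\infty$ and the adjoint semigroup $e^{t\LL^*}$ relaxes toward the projection onto $\rho_\infty$, the mean-zero solution admits the integral representation $\rho_r=\int_0^\infty e^{t\LL^*}\AA_r^*\rho_\infty\,dt$, which one verifies by differentiating in $t$ and invoking $\int_\MM\AA_r^*\rho_\infty\,d\sigma_\MM=0$ for convergence. Pairing with $\phi$ and exchanging $e^{t\LL^*}$ for $e^{t\LL}$ through the duality $\int_\MM\phi\,(e^{t\LL^*}\eta)\,d\sigma_\MM=\int_\MM (e^{t\LL}\phi)\,\eta\,d\sigma_\MM$, together with $e^{t\LL}\phi=u(\cdot,t)$ from~\eqref{equation:Kolmogorov}, transforms $\int_\MM\phi\rho_r\,d\sigma_\MM$ into $\int_0^\infty\int_\MM u(x,t)\AA_r^*\rho_\infty(x)\,d\sigma_\MM(x)\,dt$, which is the second stated formula; the convergence of the time integral again rests on the ergodicity guaranteed by Assumption~\ref{assumption:inv_measure_Poisson}.
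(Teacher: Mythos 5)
Your proposal is correct and follows essentially the same route as the paper's proof in Appendix~\ref{section:proof_order_invariant_measure}: a formal $h$-expansion of the numerical invariant density, an induction giving $\rho_1=\dots=\rho_{r-1}=0$ under the hypothesis $\AA_j^*\rho_\infty=0$, identification of $\rho_r$ through the Poisson problem, rigorous control of the remainder deferred to the backward-error-analysis estimate of Debussche--Faou (the paper's Lemma~\ref{lemma:Debussche_Faou_theorem}), and the same semigroup/duality computation for the integral representation of the leading error term. The only cosmetic difference is that you derive the hierarchy directly from the discrete stationarity relation in terms of the $\AA_k^*$, whereas the paper routes it through the modified generator $\LL^h=\LL+\sum_{n\geq1}h^nL_n$ with Bernoulli-number corrections; under the vanishing hypothesis both recursions collapse to the same equation $\LL^*\rho_r=-\AA_r^*\rho_\infty$.
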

The proof of Theorem~\ref{theorem:operator_conditions_invariant_measure} is detailed in Appendix~\ref{section:proof_order_invariant_measure} for the sake of completeness. The idea is to write an expansion of the error in the spirit of~\cite{Talay90eot}, and to generalize the analysis in~\cite{Debussche12wbe,Abdulle14hon} on~$\T^d$ and in~\cite{Faou09csd} to the context of smooth compact manifolds.

Theorem~\ref{theorem:operator_conditions_invariant_measure} states the result for times~$t\rightarrow \infty$. A bound of the error at finite time~$t_n=nh$ is typically given by the following exponential estimate (see~\cite{Faou09csd,Debussche12wbe})
\begin{equation*}
%\label{equation:numerical_error_definition}
\bigg|\E[\phi(X_n)]-\int_{\MM} \phi(x) d\mu_\infty(x)\bigg|\leq Ke^{-\mu t_n}+Ch^p,
\end{equation*}
where the constant~$\mu>0$ is in practice the spectral gap of a certain operator that depends on the numerical integrator.
Reducing the error term~$Ke^{-\mu t_n}$ is out of the scope of this paper, though the recent works~\cite{Lelievre13onr,Duncan16vru,Abdulle19act} proposed numerical methods in~$\R^d$ that improve the rate of convergence at infinity, while sometimes also reducing the variance.

\begin{remark}
\label{remark:weakened_assumptions}
One can consider possible generalisations of Theorem~\ref{theorem:operator_conditions_invariant_measure} in the case where~$\MM$ is not compact, or if~$\MM$ is a manifold of any dimension. We refer to~\cite{Abdulle14hon} for the non-compact extension of Theorem~\ref{theorem:operator_conditions_invariant_measure} in the context of~$\R^d$.
\end{remark}

\section{High order integrators for constrained Langevin dynamics}
\label{section:Runge_Kutta_section}

In this section, we propose a new class of Runge-Kutta methods for sampling the invariant measure of equation~\eqref{equation:projected_Langevin}, and present the methodology for deriving the conditions of any order for the invariant measure using Theorem~\ref{theorem:operator_conditions_invariant_measure}.
In particular, we compute exactly the consistency and order two conditions for the invariant measure as they are the most relevant for the applications.

\subsection{Runge-Kutta methods for constrained overdamped Langevin}
\label{section:class_and_consistency}

When discretizing naively equation~\eqref{equation:projected_Langevin}, one cannot ensure in general that the integrator stays on~$\MM$. It is natural to discretize instead the equivalent formulation with Lagrange multipliers
\begin{equation*}
dX=f(X)dt +\sigma dW+g(X)d\lambda_t,\quad \zeta(X)=0,\quad X(0)=X_0\in \MM.
\end{equation*}
The class of numerical schemes we obtain is in the spirit of deterministic Runge-Kutta methods for differential algebraic problems such as the methods SHAKE and RATTLE (see~\cite{Ryckaert77nio,Andersen83rav,Hairer10sod}), introduced in the context of constrained
Hamiltonian dynamics, or the SPARK class of methods for general DAEs (see~\cite{Jay98spf}).
Since evaluating~$f$ is in practical applications the most expensive part of the algorithm compared to evaluating~$g$, we propose high order integrators that are implicit in~$g$ and explicit in~$f$ in the spirit of implicit-explicit (IMEX) integrators (see, e.g.,~\cite{Hairer10sod}), so that there are only a few evaluations of~$f$ per step.
We thus consider the following class of Runge-Kutta integrators
\begin{align}
\label{equation:defRK}
  Y_i&=X_n+h\sum \limits_{j=1}^s a_{ij}f(Y_j) +\sigma \sqrt{h} d_i \xi_n + \lambda_i \sum \limits_{j=1}^s \widehat{a}_{ij} g(Y_j), \quad i = 1, \dots ,s, \nonumber\\
  \zeta(Y_i)&=0 \quad \text{if} \quad \delta_i=1, \quad i = 1, \dots ,s,\\
  X_{n+1}&=Y_s, \nonumber
\end{align}
where~$A=(a_{ij}), \widehat{A}=(\widehat{a}_{ij})\in \R^{s\times s}$ and~$\delta_i=\sum_{j=1}^s \widehat{a}_{ij}\in \{0,1\}$ are the given Runge-Kutta coefficients, and the~$\xi_n\sim\NN(0,I_d)$ are independent standard Gaussian random vectors in~$\R^d$ (an alternative with discrete bounded random variables is discussed in Remark~\ref{remark:discrete_rv}). We fix~$\delta_s=1$ so that~$X_{n+1}\in\MM$ and we ask that if~$\delta_i=0$, then~$\widehat{a}_{ij}=0$ for~$j=1,\dots,s$ (internal stages without projection,~$Y_i\notin \MM$ a.s.).
Ideally, one aims for IMEX integrators with a low number of evaluations of~$f$, we hence assume in addition that~$\widehat{A}$ is a lower triangular matrix and~$A$ is a strictly lower triangular matrix (in the spirit of DIRK methods).
We represent the numerical integrators with their associated Butcher tableau, where~$b=(a_{s,i})_i$,~$\widehat{b}=(\widehat{a}_{s,i})_i$,~$c= A\ind$ and~$\ind=(1,\dots,1)^T$.
$$\begin{array}
{c|c||c|c||c}
c & A & \delta & \widehat{A} & d\\
\hline
 & b^T &  & \widehat{b}^T & 
\end{array}$$
For instance, the Euler schemes can be written as Runge-Kutta methods of the form~\eqref{equation:defRK} with~$s=2$ and the following Butcher tableaux.
$$
\text{Euler }\eqref{equation:EE}:
\begin{array}
{c|cc||c|cc||c}
0 & 0 & 0 & 0 & 0 & 0 & 0\\
1 & 1 & 0 & 1 & 1 & 0 & 1\\
\hline
 & 1 & 0 &  & 1 & 0 & 
\end{array}
\qquad
\text{Euler }\eqref{equation:IE}:
\begin{array}
{c|cc||c|cc||c}
0 & 0 & 0 & 0 & 0 & 0 & 0\\
1 & 1 & 0 & 1 & 0 & 1 & 1\\
\hline
 & 1 & 0 &  & 0 & 1 & 
\end{array}
$$
Note that the class of methods~\eqref{equation:defRK} satisfies automatically Assumption~\ref{assumption:dvp_num}.

\begin{remark}
\label{remark:extension_RK_methods}
The class of Runge-Kutta methods~\eqref{equation:defRK} can be straightforwardly generalized (as done in~\cite{Laurent20eab} in the Euclidean case~$\R^d$) to study partitioned problems where~$f=f_1+f_2$ and, for example, to create IMEX schemes.
In order to improve the order of the method without increasing its cost, one could also apply a postprocessor (in the spirit of~\cite{Vilmart15pif} in~$\R^d$) or use multiple independent noises in~\eqref{equation:defRK} instead of only one random variable~$\xi_n\sim\NN(0,I_d)$.
This last extension can increase the number of conditions but may also increase the set of solutions. We refer in particular to~\cite{Debrabant10rkm,Laurent20eab} in the context of~$\R^d$, where it is shown for a class of stochastic Runge-Kutta method that the order conditions for weak order 3 cannot be satisfied in general, unless we use at least two independent noises.
In addition, if we rewrite the internal stages of~\eqref{equation:defRK} as
$$Y_i=X_n+h\sum \limits_{j=1}^s a_{ij}f(Y_j) +\sigma \sqrt{h} d_i \xi_n + \bigg(\sum \limits_{j=1}^s \widehat{a}_{ij} g(Y_j)\bigg)\lambda_i,$$
where~$g:\R^d\to \R^{d\times q}$ and~$\lambda_i\in\R^q$, then the same class of methods is also fit for solving~\eqref{equation:projected_Langevin} with a multidimensional constraint~$\zeta:\R^d\to\R^q$.
Note that the coefficients of the method do not depend on the dimension of the space~$d$ or the codimension~$q$ of the manifold.
%We recall from Remark~\ref{remark:weakened_assumptions} that the methodology we present in Section~\ref{section:Runge_Kutta_section} for computing the order conditions by using Theorem~\ref{theorem:operator_conditions_invariant_measure} carries over if~$\MM$ is a smooth compact submanifold of~$\R^d$ of any dimension.
%However the B-series approach proposed in Section~\ref{section:B_series} for the calculation of the order conditions does not extend straightforwardly to the case of multiple constraints.
This will be studied in future work.
\end{remark}

\begin{remark}
\label{remark:discrete_rv}
If~$\xi_n$ is a Gaussian random variable, its realisations can be arbitrarily large, and the existence and uniqueness of the solution of the system~\eqref{equation:defRK} does not hold in general.
A standard remedy to ensure that the projection on~$\MM$ always exists for~$h\leq h_0$ small enough is to replace the standard Gaussian random vectors~$\xi$ in~\eqref{equation:defRK} by bounded discrete random vectors~$\widehat{\xi}$ that have the same first moments in the spirit of~\cite[Chap.\ts 2]{Milstein04snf}.
This way, the order of the method is preserved both in the weak sense and for the invariant measure, and the method is well-posed for all~$h$ small enough.
For weak/ergodic order two, one can consider, for instance, the random vectors~$\widehat{\xi}$ with independent components~$\widehat{\xi}_i$ that satisfy
\begin{equation}
\label{equation:discrete_rv}
\P(\widehat{\xi}_i=0)=\frac{2}{3} \quad \text{and} \quad \P(\widehat{\xi}_i=\pm\sqrt{3})=\frac{1}{6}, \quad i=1,\dots,d.
\end{equation}
\end{remark}

The following lemma guarantees the well-posedness of a method of the form~\eqref{equation:defRK} with bounded random variables~$\widehat{\xi}_n$. The result is still true when~$A$ and~$\widehat{A}$ are general matrices, but we consider only the lower triangular case for the sake of brevity. This result is in the spirit of~\cite[Chap.\ts VII]{Hairer10sod} for deterministic DAEs.

\begin{lemma}
\label{lemma:existence_fixed_point}
For Runge-Kutta methods of the form~\eqref{equation:defRK} where the~$\xi_n$ are replaced by bounded random variables~$\widehat{\xi}_n$, there exists~$h_0>0$ such that for all~$h\leq h_0$, for any initial condition~$X_n\in\MM$, there exists a unique solution~$X_{n+1}$ of~\eqref{equation:defRK} in a neighbourhood of~$X_n$.
Furthermore, the internal stages satisfy~$Y_i=X_n+\OO(\sqrt{h})$ and~$\lambda_i=\OO(\sqrt{h})$ for~$i=1,\dots,s$.
\end{lemma}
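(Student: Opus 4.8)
The plan is to exploit the triangular structure of $A$ and $\widehat{A}$ to solve~\eqref{equation:defRK} one stage at a time, applying the implicit function theorem at each stage that carries a constraint. To handle the non-smooth dependence on $h$ through $\sqrt{h}$, I would first set $\eta=\sqrt{h}$ and regard the stage equations as functions of $\eta$, of the (bounded) realisation $\widehat{\xi}_n$ treated as a fixed parameter, and of the unknowns. Since $A$ is strictly lower triangular, the drift contribution to $Y_i$ involves only $Y_1,\dots,Y_{i-1}$, and since $\widehat{A}$ is lower triangular the projection contribution involves $Y_1,\dots,Y_i$ with the diagonal term $\widehat{a}_{ii}g(Y_i)$ the only implicit piece. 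I would therefore argue by induction on $i=1,\dots,s$, assuming $Y_1,\dots,Y_{i-1}$ and $\lambda_1,\dots,\lambda_{i-1}$ already built as functions of $\eta$ that are smooth near $\eta=0$ and satisfy $Y_j=X_n+\OO(\eta)$, $\lambda_j=\OO(\eta)$.

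A stage with $\delta_i=0$ forces $\widehat{a}_{ij}=0$ for all $j$ by the standing conventions on~\eqref{equation:defRK}, so it is fully explicit, $Y_i=X_n+h\sum_{j<i}a_{ij}f(Y_j)+\sigma\sqrt{h}\,d_i\widehat{\xi}_n$, which is smooth in $\eta$ and gives $Y_i=X_n+\OO(\eta)$ at once ($\lambda_i$ is then irrelevant and may be set to zero). For a stage with $\delta_i=1$ I would gather the already-known data into $\Xi_i=X_n+h\sum_{j<i}a_{ij}f(Y_j)+\sigma\sqrt{h}\,d_i\widehat{\xi}_n$ and $W_i=\sum_{j<i}\widehat{a}_{ij}g(Y_j)$, and solve the $(d+1)$-dimensional system
\begin{align*}
\Phi_i^{(1)}(Y_i,\lambda_i)&=Y_i-\Xi_i-\lambda_i\bigl(W_i+\widehat{a}_{ii}g(Y_i)\bigr)=0,\\
\Phi_i^{(2)}(Y_i,\lambda_i)&=\zeta(Y_i)=0,
\end{align*}
for $(Y_i,\lambda_i)\in\R^d\times\R$. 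At $\eta=0$ one has $\Xi_i=X_n$, and since $X_n\in\MM$ the pair $(Y_i,\lambda_i)=(X_n,0)$ solves this system.

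The crux is that the Jacobian of $(\Phi_i^{(1)},\Phi_i^{(2)})$ in $(Y_i,\lambda_i)$ at this base point reduces to
$$
J_i=\begin{pmatrix} I & -g(X_n)\\ g(X_n)^T & 0\end{pmatrix},
$$
since $\lambda_i=0$ removes the term $\lambda_i\widehat{a}_{ii}g'(Y_i)$ from the first block, and $W_i+\widehat{a}_{ii}g(X_n)=\bigl(\sum_{j\le i}\widehat{a}_{ij}\bigr)g(X_n)=\delta_i g(X_n)=g(X_n)$ using $\delta_i=1$. This saddle-point matrix is invertible, with Schur complement $g(X_n)^T g(X_n)$, so $\det J_i=\abs{g(X_n)}^2=G(X_n)\geq\alpha>0$ by the standing assumption on $\MM$. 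The implicit function theorem then produces a locally unique solution $(Y_i,\lambda_i)$ depending smoothly on $\eta$ with value $(X_n,0)$ at $\eta=0$; reading off the first-order term gives $Y_i=X_n+\OO(\eta)=X_n+\OO(\sqrt{h})$ and $\lambda_i=\OO(\sqrt{h})$, which closes the induction and yields $X_{n+1}=Y_s\in\MM$.

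The step needing the most care, and the one I regard as the main obstacle, is upgrading the pointwise implicit function theorem to a single $h_0>0$ that works uniformly over all initial data $X_n\in\MM$ and all realisations $\widehat{\xi}_n$. I would obtain this from a quantitative, contraction-mapping form of the implicit function theorem: the operator norm of $J_i^{-1}$ is controlled by $1/\alpha$ together with bounds on $f$, $g$ and their derivatives on a fixed compact tube around $\MM$, while $\abs{\widehat{\xi}_n}$ is bounded by hypothesis and $\MM$ is compact, so the radius of the neighbourhood and the admissible range of $\eta$ can be chosen independently of $X_n$ and $\widehat{\xi}_n$. This is exactly where boundedness of $\widehat{\xi}_n$ is indispensable: for a Gaussian $\xi_n$ the term $\sigma\sqrt{h}\,d_i\widehat{\xi}_n$ may be arbitrarily large and no uniform $h_0$ can exist, consistently with the motivation of Remark~\ref{remark:discrete_rv}.
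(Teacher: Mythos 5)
Your proof is correct, and it follows the same overall strategy as the paper's: induction over the stages using the triangular structure, an application of the implicit function theorem at the base point $(Y_i,\lambda_i)=(X_n,0)$, $h=0$, and uniformity of $h_0$ from the compactness of $\MM$, the bound $G\geq\alpha>0$ and the boundedness of $\widehat{\xi}_n$. The one genuine difference is in how the implicit function theorem is set up. The paper first eliminates $\lambda_i$: it writes $\zeta(Y_i)-\zeta(X_n)=\int_0^1 g^T(X_n+\tau(Y_i-X_n))\,d\tau\,(Y_i-X_n)=0$, solves this for $\lambda_i$, and substitutes back to obtain a single $d$-dimensional equation $F(Y_i,h)=0$ whose Jacobian at the base point is the scalar matrix $G(X_n)I_d$; the multiplier is then recovered a posteriori. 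You instead keep the pair $(Y_i,\lambda_i)$ and apply the implicit function theorem to the augmented $(d+1)$-dimensional system, whose Jacobian is the saddle-point matrix with Schur complement $G(X_n)$. The two devices are equivalent here (both invertibility conditions reduce to $G(X_n)\geq\alpha$), but yours is the more standard formulation for index-two DAEs and yields $Y_i$ and $\lambda_i$ simultaneously, while the paper's elimination keeps the unknown in $\R^d$ at the cost of a slightly more contorted auxiliary function $F$. Your explicit reparametrization $\eta=\sqrt{h}$ is also a useful point of rigour that the paper leaves implicit when it invokes smoothness in $h$ through the term $\sigma\sqrt{h}\,d_i\widehat{\xi}_n$.
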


\begin{proof}
We proceed by induction on~$i$. We assume that for~$j<i$, the~$Y_j$ are already defined and satisfy~$Y_j=X_n+\OO(\sqrt{h})$. The result is straightforward if~$\delta_i=0$. We thus assume that~$\delta_i=1$ and prove the existence of a unique solution to the equations of the internal stage~$i$:
\begin{align}
\label{equation:proof_existence_RK_1}
  Y_i&=X_n+h\sum_{j=1}^{i-1} a_{ij}f(Y_j) +\sigma \sqrt{h} d_i \widehat{\xi}_n + \lambda_i \sum_{j=1}^i \widehat{a}_{ij} g(Y_j),\\
\label{equation:proof_existence_RK_2}
  \zeta(Y_i)&=0.
\end{align}
Using~$\zeta(X_n)=0$, we rewrite equation~\eqref{equation:proof_existence_RK_2} as
\begin{equation}
\label{equation:proof_existence_RK_3}
\zeta(Y_i)-\zeta(X_n)=\int_0^1 g^T(X_n+\tau(Y_i-X_n))d\tau(Y_i-X_n)=0.
\end{equation}
Inserting~\eqref{equation:proof_existence_RK_1} in~\eqref{equation:proof_existence_RK_3} yields
\begin{equation}
\label{equation:proof_existence_RK_4}
\int_0^1 g^T(X_n+\tau(Y_i-X_n))d\tau\Big[h\sum_{j=1}^{i-1} a_{ij}f(Y_j) +\sigma \sqrt{h} d_i \widehat{\xi}_n + \lambda_i \sum_{j=1}^i \widehat{a}_{ij} g(Y_j)\Big]=0.
\end{equation}
Multiplying both sides of equation~\eqref{equation:proof_existence_RK_1} by~$\int_0^1 g^T(X_n+\tau(Y_i-X_n))d\tau \Big(\sum_{j=1}^i \widehat{a}_{ij} g(Y_j)\Big)$, and substituting~$\lambda_i$ in~\eqref{equation:proof_existence_RK_1} with its value from~\eqref{equation:proof_existence_RK_4}, we deduce that~$F(Y_i,h)=0$, where the function~$F:\R^d\times\R\rightarrow\R^d$ is given by
\begin{align*}
F(y,t)&=\int_0^1 g^T(X_n+\tau(y-X_n))d\tau
\bigg[
\Big(t\sum_{j=1}^{i-1} a_{ij}f(Y_j) +\sigma \sqrt{t} d_i \widehat{\xi}_n\Big)\Big(\sum_{j=1}^{i-1} \widehat{a}_{ij} g(Y_j)+\widehat{a}_{ii} g(y)\Big)\\
&+\Big(\sum_{j=1}^{i-1} \widehat{a}_{ij} g(Y_j)+\widehat{a}_{ii} g(y)\Big) \Big(y-X_n-t\sum_{j=1}^{i-1} a_{ij}f(Y_j) -\sigma \sqrt{t} d_i \widehat{\xi}_n\Big)
\bigg].
\end{align*}
As~$F(X_n,0)=0$ and the partial differential~$\partial_y F(X_n,0)=G(X_n) I_d$ is invertible, the implicit function theorem yields the existence and uniqueness of~$Y_i$ in a ball of center~$X_n$ for~$h\leq h_0$ small enough. As~$\widehat{\xi}_n$ is bounded and~$\MM$ is compact, there exists a deterministic~$h_0$ that works for every initial condition~$X_n\in\MM$.
Now that~$Y_i$ is well-posed, we deduce from the identity~$F(Y_i,h)=0$ that~$Y_i=X_n+\OO(\sqrt{h})$ and we derive from~\eqref{equation:proof_existence_RK_4} that~$\lambda_i$ is well-posed for~$h$ small enough and satisfies~$\lambda_i=\OO(\sqrt{h})$. Finally we observe that~$(Y_i,\lambda_i)$ is indeed a solution to~\eqref{equation:proof_existence_RK_1}-\eqref{equation:proof_existence_RK_2}.
\end{proof}

\begin{remark}
\label{remark:Newton_method}
In practice, one can solve numerically each internal stage of the set of equations~\eqref{equation:defRK} with a fixed point iterations or a Newton method starting from~$Y_i=X_n$ and~$\lambda_i=0$. As~$\MM$ is compact, if the~$\xi_n$ are replaced by bounded random variables, these two methods converge for~$h\leq h_0$ where~$h_0$ is small enough and independent of the initial condition. It is crucial to initialize the~$Y_i$ in a neighbourhood of~$X_n$ as~\eqref{equation:defRK} has multiple solutions in general. For example, the Euler scheme~\eqref{equation:IE} always has two solutions if~$\MM$ is a sphere (the two intersections of~$\MM$ and a straight line going through the center of~$\MM$).
\end{remark}

Before looking at the consistency and the order conditions of the class of methods~\eqref{equation:defRK}, we introduce a concise notation for multiplying vectors component-wise.
\begin{definition}
\label{definition:diamond}
For~$y, y^{(1)}, \dots, y^{(n)}\in\R^d$ and~$m\geq 0$, we define the diamond product and the diamond power as the vectors in~$\R^d$,
$$y^{(1)}\diam \dots \diam y^{(n)}=\bigg(\prod_{k=1}^n y^{(k)}_i\bigg)_i \quad \text{and} \quad y^{\diam m}=(y_i^m)_i.$$
\end{definition}

We present below the detailed calculation of the consistency conditions of the class of methods~\eqref{equation:defRK} for the constrained overdamped Langevin equation~\eqref{equation:projected_Langevin}.
Similar proofs can be found in~\cite[Prop.\ts 3.24]{Lelievre10fec} for the Euler schemes~\eqref{equation:EE}-\eqref{equation:IE}, and in~\cite{Abdulle14hon,Laurent20eab} for Runge-Kutta methods in~$\R^d$.
\begin{proposition}
\label{proposition:consistency}
For a Runge-Kutta method of the form~\eqref{equation:defRK}, the operator~$\AA_0$ in~\eqref{equation:dvp_num} is given for~$\phi\in \CC^\infty(\R^d,\R)$ by
\begin{align*}
\AA_0\phi&=
b^T\ind \phi'f
- b^T\ind G^{-1}(g,f)\phi'g
-\frac{\sigma^2}{2} d_s^2 G^{-1}\Div(g)\phi'g
+\frac{\sigma^2}{2} d_s^2 \Delta\phi
-\frac{\sigma^2}{2} d_s^2 G^{-1} \phi''(g,g)\\
&+\sigma^2 d_s\left(\widehat{b}^T d-\widehat{b}^T(\delta \diam d)+\frac{1}{2}d_s\right) G^{-2} (g,g'g)\phi'g
+\sigma^2 d_s \left(\widehat{b}^T(\delta \diam d)-\widehat{b}^T d \right) G^{-1} \phi'g'g.
\end{align*}
In particular, if 
\begin{equation}
\label{equation:consistency_conditions}
b^T\ind=d_s=1 \quad \text{and} \quad \widehat{b}^T d=\widehat{b}^T(\delta \diam d),
\end{equation}
then the method is consistent, that is,~$\AA_0=\LL$.
\end{proposition}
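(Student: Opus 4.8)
The plan is to expand each internal stage in half-integer powers of $h$, to determine the Lagrange multipliers recursively from the constraints, and then to extract the coefficient of $h$ in the weak expansion~\eqref{equation:dvp_num}. By Lemma~\ref{lemma:existence_fixed_point}, for $h$ small enough the stages are well-defined and satisfy $Y_i-X_n=\OO(\sqrt h)$ and $\lambda_i=\OO(\sqrt h)$, which legitimizes the Ans\"atze
\begin{equation*}
Y_i=X_n+\sqrt h\, Y_i^{(1)}+h\, Y_i^{(2)}+\OO(h^{3/2}),\qquad \lambda_i=\sqrt h\, \lambda_i^{(1)}+h\, \lambda_i^{(2)}+\OO(h^{3/2}).
\end{equation*}
Substituting these into~\eqref{equation:defRK}, using that $A$ is strictly lower triangular (so $\sum_j a_{ij}f(Y_j)=c_i f(X_n)+\OO(\sqrt h)$ with $c_i=(A\ind)_i$) together with $g(Y_j)=g+\sqrt h\, g'Y_j^{(1)}+\OO(h)$, and matching powers of $\sqrt h$, I would obtain (writing $g=g(X_n)$, $g'=g'(X_n)$)
\begin{equation*}
Y_i^{(1)}=\sigma d_i\xi_n+\delta_i\lambda_i^{(1)}g,\qquad
Y_i^{(2)}=c_i f+\delta_i\lambda_i^{(2)}g+\lambda_i^{(1)}g'\sum_{j=1}^s\widehat a_{ij}Y_j^{(1)}.
\end{equation*}

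Next I would fix the multipliers by imposing $\zeta(Y_i)=0$ on the constrained stages $\delta_i=1$. Using $\zeta(X_n)=0$ and Taylor expanding, the constraint reads $(g,Y_i-X_n)+\tfrac12(Y_i-X_n)^{\!\top}g'(Y_i-X_n)+\OO(\abs{Y_i-X_n}^3)=0$. At order $\sqrt h$ this gives $(g,Y_i^{(1)})=0$, hence $\lambda_i^{(1)}=-\sigma d_i G^{-1}(g,\xi_n)$ and the projected leading term $Y_i^{(1)}=\sigma d_i\Pi_\MM\xi_n$; an unconstrained stage instead has $Y_i^{(1)}=\sigma d_i\xi_n$, so uniformly $Y_j^{(1)}=\sigma d_j(I-\delta_j G^{-1}gg^{\top})\xi_n$. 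The crucial point is the order-$h$ constraint $(g,Y_i^{(2)})+\tfrac12(Y_i^{(1)})^{\!\top}g'\,Y_i^{(1)}=0$: the quadratic term in the constraint, which is easy to overlook, is precisely what will generate the $\Div(g)$ contribution and the $\tfrac12 d_s$ shift in the final formula. Since the coefficient of $\lambda_i^{(2)}$ in $(g,Y_i^{(2)})$ is $(g,g)=G$, this linear equation determines $\lambda_s^{(2)}$ at the last stage in terms of $f$, $g$, $g'$ and $\xi_n$.

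Because $\delta_s=1$ and $X_{n+1}=Y_s$, a Taylor expansion of $\phi$ around $X_n$ combined with $\E[\xi_n]=0$ (so the $\OO(\sqrt h)$ and the third-derivative $\OO(h^{3/2})$ terms vanish in expectation) identifies the coefficient of $h$ as
\begin{equation*}
\AA_0\phi=\E\big[\phi'(X_n)Y_s^{(2)}\big]+\tfrac12\,\E\big[\phi''(Y_s^{(1)},Y_s^{(1)})\big].
\end{equation*}
All remaining expectations are Gaussian and reduce to $\E[\xi_n\xi_n^{\top}]=I_d$; the identities I would use are $\E[\phi''(\Pi_\MM\xi_n,\Pi_\MM\xi_n)]=\Delta\phi-G^{-1}\phi''(g,g)$, $\E[(g,\xi_n)\phi'(g'\xi_n)]=\phi'g'g$, $\E[(g,\xi_n)(g,g'\xi_n)]=(g,g'g)$ and $\E[(\Pi_\MM\xi_n)^{\!\top}g'(\Pi_\MM\xi_n)]=\Div(g)-G^{-1}(g,g'g)$, where the last identities follow from cyclicity of the trace and the idempotency $\Pi_\MM^2=\Pi_\MM$. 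The bookkeeping $c_s=b^{\top}\ind$ and $\widehat a_{sj}=\widehat b_j$ then assembles the diamond combinations $\widehat b^{\top}d$ and $\widehat b^{\top}(\delta\diam d)$, and collecting all contributions yields exactly the claimed expression for $\AA_0\phi$.

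Finally, comparing $\AA_0\phi$ with the generator~\eqref{equation:generator_L_Langevin} term by term gives the consistency conditions: matching $\phi'f$ forces $b^{\top}\ind=1$, matching $\Delta\phi$ (equivalently $\phi''(g,g)$) forces $d_s^2=1$, the absence of a $\phi'g'g$ term in $\LL$ forces $\widehat b^{\top}d=\widehat b^{\top}(\delta\diam d)$, and with these the $G^{-2}(g,g'g)\phi'g$ coefficients automatically agree, which is precisely~\eqref{equation:consistency_conditions}. The main obstacle is the careful recursive bookkeeping of the implicit multipliers across stages with $\delta_i\in\{0,1\}$, and in particular retaining the second-order term in the constraint expansion, which is the sole source of the $\Div(g)$ and $\tfrac12 d_s$ contributions.
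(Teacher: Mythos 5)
Your proposal is correct and follows essentially the same route as the paper's proof: a half-integer power expansion of the internal stages and Lagrange multipliers, recursive determination of $\lambda_i^{(1)}$ and $\lambda_i^{(2)}$ from the order-$\sqrt h$ and order-$h$ terms of the constraint $\zeta(Y_i)=0$ (including the quadratic term $\tfrac12(Y_i^{(1)})^{T}g'Y_i^{(1)}$, which the paper likewise retains and which produces the $\Div(g)$ and $\tfrac12 d_s$ contributions), followed by Gaussian moment identities and comparison with the generator~\eqref{equation:generator_L_Langevin}. The only cosmetic difference is that you package the second-moment computations via $\Pi_\MM\xi_n$ and trace identities where the paper expands the corresponding three terms separately before taking expectations.
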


\begin{proof}
If we apply one step of a method of the form~\eqref{equation:defRK} with the initial condition~$X_0=x$, then the internal stages~$Y_i$ satisfy the following expansion
\begin{align*}
Y_i&=x+\sigma \sqrt{h}d_i \xi +hc_i f(x) +R^h,\quad \text{if} \quad \delta_i=0,\\
Y_i&=x+\sqrt{h}\left[\sigma d_i \xi+\lambda_{1/2,i}(x) g(x)\right]
+h\Big[c_i f(x)+\lambda_{1,i}(x) g(x)+\sigma\lambda_{1/2,i}(x)\sum_{j=1}^s \widehat{a}_{ij}d_j g'(x)\xi \\&+\lambda_{1/2,i}(x)\sum_{j=1}^s \widehat{a}_{ij} \lambda_{1/2,j}(x)\delta_j g'(x)g(x) \Big]
+R^h,\quad \text{if} \quad \delta_i=1,
\end{align*}
where the remainder satisfies~$\abs{R^h}\leq Ch^{3/2}$, and where we used that~$\lambda_{i}$ can be developed in powers of~$\sqrt{h}$ as~$\lambda_{i}=\sqrt{h}\lambda_{1/2,i}+h\lambda_{1,i}+\dots$ in the spirit of~\cite[Lemma\ts 3.25]{Lelievre10fec}.
% This property is proven by applying recursively the implicit function theorem.
If~$\delta_i=1$,~$\zeta(Y_i)$ can also be expanded as
\begin{align*}
\zeta(Y_i)&=\zeta(x)
+\sqrt{h}\left[\sigma d_i (g,\xi)+ \lambda_{1/2,i} G \right]
+h\Big[c_i (g,f)+\lambda_{1,i} G +\lambda_{1/2,i}\sum_{j=1}^s \widehat{a}_{ij} \lambda_{1/2,j}\delta_j (g,g'g) \\&+\sigma\lambda_{1/2,i}\sum_{j=1}^s \widehat{a}_{ij}d_j (g,g'\xi)+\frac{1}{2}\sigma^2 d_i^2 (\xi,g'\xi)+\sigma \lambda_{1/2,i} d_i (g,g'\xi) + \frac{1}{2} \lambda_{1/2,i}^2 (g,g'g) \Big]+\dots
\end{align*}
where we omitted the dependency in~$x$ of~$G$,~$g$,~$g'$ and the~$\lambda_{k/2,j}$'s for brevity.
We have~$\zeta(Y_i)=\zeta(x)=0$ (since~$x\in \MM$), thus by identifying each term of the expansion with zero, we get
\begin{align*}
\lambda_{1/2,i}&=-\sigma \delta_i d_i G^{-1}(g,\xi),\\
\lambda_{1,i}&=- \delta_i c_i G^{-1}(g,f)
+\sigma^2 \delta_i\left(\sum_{j=1}^s \widehat{a}_{ij} d_i d_j+d_i^2\right) G^{-2}(g,\xi)(g,g'\xi)\\
&-\sigma^2 \delta_i\left(\sum_{j=1}^s \widehat{a}_{ij} \delta_j d_i d_j+\frac{1}{2} d_i^2\right) G^{-3}(g,\xi)^2 (g,g'g)
-\frac{\sigma^2}{2} \delta_i d_i^2 G^{-1}(\xi,g'\xi).
\end{align*}
For~$\phi$ a test function, the operator~$\AA_0\phi$ satisfies
$$\E[\phi(X_1)]=\E[\phi(Y_s)]=\phi(x)+h\AA_0\phi(x)+h^2\AA_1\phi(x)+\dots$$
By replacing~$Y_s$ with its expansion in powers of~$h^{1/2}$, and by identifying the first terms, we deduce that
\begin{align*}
\AA_0\phi&=\E\Big[
c_s\phi'f
- c_s G^{-1}(g,f)\phi'g
-\frac{\sigma^2}{2} d_s^2 G^{-1}(\xi,g'\xi)\phi'g
+\sigma^2 d_s (\widehat{b}^T d+d_s) G^{-2}(g,\xi)(g,g'\xi)\phi'g\\
&-\sigma^2 d_s \left(\widehat{b}^T(\delta \diam d)+\frac{1}{2} d_s \right) G^{-3}(g,\xi)^2 (g,g'g)\phi'g
+\frac{\sigma^2}{2} d_s^2 \phi''(\xi,\xi)
-\sigma^2 d_s^2 G^{-1} (g,\xi) \phi''(g,\xi)\\
&+\frac{\sigma^2}{2} d_s^2 G^{-2} (g,\xi)^2 \phi''(g,g)
+\sigma^2 d_s \widehat{b}^T(\delta \diam d) G^{-2} (g,\xi)^2 \phi'g'g
-\sigma^2 d_s \widehat{b}^T d G^{-1} (g,\xi)\phi'g'\xi
\Big],
\end{align*}
where we used that~$\delta_s=1$ and that all the terms containing an odd number of~$\xi$ vanish since odd moments of~$\xi$ are zero. Distributing the expectation on each term and using~$c_s=b^T\ind$ yields the desired expression of~$\AA_0\phi$.
We deduce the consistency conditions~$b^T\ind=d_s=1$ and~$\widehat{b}^T d=\widehat{b}^T(\delta \diam d)$ in order to get~$\AA_0=\LL$.
\end{proof}

\begin{remark}
The analysis presented in Section~\ref{section:class_and_consistency} is conducted for the overdamped Langevin dynamics~\eqref{equation:projected_Langevin}.
It would be interesting to consider extensions with multiplicative noise or a non-gradient vector field~$f$. The calculations would likely become more involved and we may get more order conditions (see, for instance,~\cite[Thm.\ts 3.3]{Abdulle14hon} and~\cite[Remark 5.1 and Sect.\ts 5.5]{Laurent20eab} in the context of~$\R^d$, where many additional terms arise, in particular for the integration by parts calculations).
This will be studied in future work.
\end{remark}

\subsection{Order conditions for the invariant measure on manifolds}
%\label{section:Main_theorems_order_conditions}

We now derive the methodology for getting the conditions of arbitrary high order for sampling the invariant measure of the constrained overdamped Langevin equation~\eqref{equation:projected_Langevin}.
In particular, the following theorem presents the Runge-Kutta conditions for order two for the invariant measure on~$\MM$.
Note that the number of conditions does not depend on the dimension of the space~$d$.

\begin{theorem}[Runge-Kutta conditions for order two for the invariant measure]
\label{theorem:RK_conditions}
We consider a Runge-Kutta method of the form~\eqref{equation:defRK} and assume the consistency condition~\eqref{equation:consistency_conditions}.
If the method is ergodic and if the following conditions are satisfied, then the integrator has order two for the invariant measure:
$$\begin{array}{l}
\widehat{b}^T d=b^T d,\\
b^T c=b^T (\delta \diam c)=b^T d^{\diam 2}=b^T (\delta \diam d^{\diam 2})=2\widehat{b}^T d-\frac{1}{2},\\
\widehat{b}^T c=\widehat{b}^T (\delta \diam c)=\widehat{b}^T d^{\diam 2}=\widehat{b}^T(\delta \diam d^{\diam 2})=\widehat{b}^T d^{\diam 3}=\widehat{b}^T(\delta \diam d^{\diam 3})=2\widehat{b}^T d-\frac{1}{2},\\
\widehat{b}^T (c \diam d)=\widehat{b}^T (\delta \diam c \diam d),\\
b^T (d \diam \widehat{A}((\ind-\delta)\diam d)))=0,\\
\widehat{b}^T A((\delta-\ind)\diam d))=\widehat{b}^T (\delta\diam A((\delta-\ind)\diam d)))=(\widehat{b}^T d)^2-2 \widehat{b}^T d +\frac{1}{2},\\
\widehat{b}^T (d \diam \widehat{A}c)=\widehat{b}^T (d \diam \widehat{A}d^{\diam 2})=\widehat{b}^T (d \diam \widehat{A}(\delta\diam d^{\diam 2}))=2 \widehat{b}^T (d \diam \widehat{A}d)+(\widehat{b}^T d)^2-2 \widehat{b}^T d +\frac{1}{2},\\
\widehat{b}^T (d^{\diam 2} \diam \widehat{A}d)=\widehat{b}^T (d \diam \widehat{A}d)+\frac{1}{2}(\widehat{b}^T d)^2,\\
\widehat{b}^T (c \diam \widehat{A}((\delta-\ind)\diam d)+\widehat{b}^T (d \diam \widehat{A}((\delta-3\cdot\ind)\diam d)+\widehat{b}^T (d \diam \widehat{A}(\delta\diam c)=2(\widehat{b}^T d)^2-4 \widehat{b}^T d +1,\\
\widehat{b}^T (d^{\diam 2} \diam \widehat{A}(\delta \diam d))+\widehat{b}^T (d \diam \widehat{A}(\delta \diam d))=2\widehat{b}^T (d \diam \widehat{A}d)+\frac{3}{2}(\widehat{b}^T d)^2-2 \widehat{b}^T d +\frac{1}{2},\\
\widehat{b}^T (d \diam (\widehat{A}((\ind-\delta) \diam d))^{\diam 2})=0,\\
\widehat{b}^T (d \diam (\widehat{A} d)^{\diam 2})+3 \widehat{b}^T (d \diam \widehat{A} (d \diam \widehat{A} ((\ind-\delta)\diam d)))=(4-2\widehat{b}^T d)\widehat{b}^T (d \diam \widehat{A}d)+3(\widehat{b}^T d)^2-4 \widehat{b}^T d +1.
\end{array}$$
In the particular case where we set~$\delta=\ind$, the order two conditions reduce to the following:
$$\begin{array}{l}
(\widehat{b}^T d)^2-2 \widehat{b}^T d +\frac{1}{2}=0,\\
\widehat{b}^T d=b^T d,\\
b^T c=b^T d^{\diam 2}=\widehat{b}^T c=\widehat{b}^T d^{\diam 2}=\widehat{b}^T d^{\diam 3}=2\widehat{b}^T d-\frac{1}{2},\\
\widehat{b}^T (d \diam \widehat{A}c)=\widehat{b}^T (d \diam \widehat{A}d^{\diam 2})=2 \widehat{b}^T (d \diam \widehat{A}d),\\
\widehat{b}^T (d^{\diam 2} \diam \widehat{A}d)=\widehat{b}^T (d \diam \widehat{A}d)+\widehat{b}^T d-\frac{1}{4},\\
\widehat{b}^T (d \diam (\widehat{A} d)^{\diam 2})=(4-2\widehat{b}^T d)\widehat{b}^T (d \diam \widehat{A}d) +2 \widehat{b}^T d -\frac{1}{2}.
\end{array}$$
\end{theorem}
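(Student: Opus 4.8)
The plan is to invoke Theorem~\ref{theorem:operator_conditions_invariant_measure} with $r=2$. Since consistency ($\AA_0=\LL$) is already secured by the assumed condition~\eqref{equation:consistency_conditions}, order two for the invariant measure reduces to the single operator identity $\AA_1^*\rho_\infty=0$ in $L^2(d\sigma_\MM)$. The argument therefore splits into two parts: (i) computing the second operator $\AA_1$ of the weak expansion~\eqref{equation:dvp_num}, and (ii) translating $\AA_1^*\rho_\infty=0$ into the displayed polynomial identities in the Runge--Kutta coefficients.

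First I would compute $\AA_1$ by pushing the expansion carried out in the proof of Proposition~\ref{proposition:consistency} one half-integer order further. Using Lemma~\ref{lemma:existence_fixed_point}, each internal stage and each Lagrange multiplier admits an expansion $Y_i=x+\sqrt{h}(\cdots)+h(\cdots)+h^{3/2}(\cdots)+\OO(h^2)$ and $\lambda_i=\sqrt{h}\lambda_{1/2,i}+h\lambda_{1,i}+h^{3/2}\lambda_{3/2,i}+\OO(h^2)$, where the $\lambda_{k/2,i}$ are obtained recursively by inserting the stage expansion into $\zeta(Y_i)=0$ and matching powers of $\sqrt{h}$. Substituting $Y_s$ into $\E[\phi(Y_s)]$, Taylor-expanding $\phi$, and taking the Gaussian expectation (odd moments of $\xi$ vanish, even moments produce the usual combinatorial weights) yields $\AA_1\phi$ as a finite linear combination of elementary differential operators in $\phi$, $f=-\nabla V$ and $g=\nabla\zeta$, each weighted by a polynomial in $A,\widehat{A},b,\widehat{b},c,d,\delta$ assembled from $\diam$-products as in Definition~\ref{definition:diamond}. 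This step is long but mechanical.

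The second step is where the exotic aromatic B-series of Section~\ref{section:B_series} enter. Rather than forming $\AA_1^*$ directly, I would test the condition against an arbitrary $\phi$: the identity $\AA_1^*\rho_\infty=0$ is equivalent to $\int_\MM(\AA_1\phi)\,\rho_\infty\,d\sigma_\MM=0$ for all $\phi\in\CC^\infty(N_\MM,\R)$. Each monomial of $\AA_1\phi$ is encoded as an exotic aromatic forest, and the manifold integration by parts against $\rho_\infty=Z^{-1}\exp(-\tfrac{2}{\sigma^2}V)$ is realized combinatorially as the grafting and \emph{liana} operations of the extended formalism; the relation $f=-\nabla V$ (so that derivatives falling on $\rho_\infty$ produce factors of $f$) lets one eliminate the potential in favour of $f$. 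Once every derivative has been moved off $\phi$, the integral becomes a sum $\sum_\gamma c(\gamma)\int_\MM \phi\,F(\gamma)\,\rho_\infty\,d\sigma_\MM$ indexed by closed exotic aromatic forests $\gamma$, with $c(\gamma)$ a polynomial in the Runge--Kutta data. Requiring vanishing for all $\phi$ forces $c(\gamma)=0$ for each $\gamma$ in an independent spanning set, and reading off these polynomials gives exactly the listed conditions; setting $\delta=\ind$ removes the stages with $\delta_i=0$ and collapses the system to the reduced list.

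The main obstacle is step (ii), and specifically the selection of an independent set of exotic aromatic forests after integration by parts. The manifold integration by parts (which brings in $\Div_\MM$, the factors $G^{-1}$ and the constraint direction $g$) together with the gradient structure $f=-\nabla V$ and the Gaussian moment identities create numerous linear dependencies among the naively distinct forests, so that a direct term-by-term matching would produce a redundant and possibly inconsistent system. It is precisely these $G^{-1}$ and $\Div_\MM$ contributions, absent in the $\R^d$ theory of~\cite{Laurent20eab}, that require extending the exotic aromatic B-series in Section~\ref{section:B_series}. The role of that formalism is to organize these relations and isolate a minimal basis, so that the conditions stated in the theorem are both necessary and sufficient for $\AA_1^*\rho_\infty=0$; verifying that this basis is complete, i.e.\ that no further condition is hidden, is the delicate point of the argument.
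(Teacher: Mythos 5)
Your proposal follows essentially the same route as the paper: invoke Theorem~\ref{theorem:operator_conditions_invariant_measure} with $r=2$, compute $\AA_1$ by pushing the stage expansions of Proposition~\ref{proposition:consistency} one order further, and then use the integration-by-parts identity~\eqref{equation:IPP_mu_infty} (encoded combinatorially as Lemma~\ref{lemma:IPP_B_series}) to reduce $\int_\MM\AA_1\phi\,d\mu_\infty$ to $\int_\MM\AA_1^0\phi\,d\mu_\infty$ with $\AA_1^0$ of order one, whose coefficients are then set to zero (Tables 2 and 3). The one place you overshoot is the final "delicate point": the theorem only asserts \emph{sufficiency}, so one never needs the forests $F(\gamma)$ to form an independent spanning set -- forcing every coefficient $a^0(\gamma)$ to vanish trivially gives $\AA_1^0=0$ hence $\AA_1^*\rho_\infty=0$, and the question of whether the listed conditions are also necessary (which would indeed require controlling the linear dependencies among elementary differentials of gradients) is not addressed, nor needed, in the paper.
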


For simplicity, we used in Theorem~\ref{theorem:RK_conditions} the notation~$\diam$ of Definition~\ref{definition:diamond}. For instance, the condition~$\widehat{b}^T (d^{\diam 2} \diam \widehat{A}d)=\widehat{b}^T (d \diam \widehat{A}d)+\frac{1}{2}(\widehat{b}^T d)^2$ rewrites into
$$\sum_{i,j=1}^s \widehat{b}_i d_i^2 \widehat{a}_{ij} d_j=\sum_{i,j=1}^d \widehat{b}_i d_i \widehat{a}_{ij} d_j+\frac{1}{2}\Big(\sum_{i=1}^d \widehat{b}_i d_i\Big)^2.$$
The order conditions of Theorem~\ref{theorem:RK_conditions} can be obtained from straightforward calculations with the following methodology.
We compute the operator~$\AA_1$ with the same method used for~$\AA_0$ in Proposition~\ref{proposition:consistency}. It is a differential operator of order four with the following first terms
\begin{equation}
\label{equation:expansion_A1_order_4}
\AA_1\phi=\frac{\sigma^4}{8}\Delta^2\phi-\frac{\sigma^4}{4}G^{-1}\Delta\phi''(g,g)+\frac{\sigma^4}{8}G^{-2}\phi^{(4)}(g,g,g,g)+\BB\phi,
\end{equation}
where~$\BB$ is a differential operator of order three.
We present the complete expansion of~$\AA_1$ in Section~\ref{section:B_series} by using a B-series approach.
If we assume that~$\widehat{b}^T d=b^T d$, then we can integrate by parts to transform~$\int_\MM \AA_1\phi d\mu_\infty$ into an integral of the form~$\int_\MM \AA_1^0\phi d\mu_\infty$ where~$\AA_1^0\phi$ is a differential operator of order one in~$\phi$ (in the spirit of~\cite{Abdulle14hon,Laurent20eab}).
On a manifold, the integration by parts is a corollary of the Green theorem (see, for instance,~\cite[Chap.\ts II]{Sakai96tom}).
As we shall see below, it reveals a crucial tool for deriving order conditions for the invariant measure.
To perform the calculations in a systematic manner, a formalization of the integration by parts process with trees and B-series is presented in Section~\ref{section:B_series}.
\begin{lemma}[Integration by parts on~$\MM$]
%\label{lemma:IPP}
If~$\psi:\R^d\to\R$ and~$H:\R^d\to\R^d$ are smooth functions, then
\begin{equation*}
%\label{equation:IPP_general_dsigma}
\int_{\MM}(\nabla_\MM\psi, H) d\sigma_\MM=-\int_\MM \psi \Div_\MM(\Pi_\MM H)d\sigma_\MM,
\end{equation*}
where~$\nabla_\MM\psi:=\Pi_\MM \nabla\psi$ and~$\Div_\MM (H):=\Div(H)-G^{-1}(g,H'g)$.
In addition, with the invariant measure~$d\mu_\infty=\rho_\infty d\sigma_\MM$ and~$k\geq 0$, we obtain
\begin{align}
\label{equation:IPP_mu_infty}
\int_{\MM} \Big[G^{-k} \psi'H
&-G^{-(k+1)}(g,H)\psi'g\Big] d\mu_\infty
=
\int_\MM \Big[G^{-(k+1)}(g,H'g)\psi\\
&-(2k+1)G^{-(k+2)}(g,g'g)(g,H)\psi
-G^{-k}\Div(H)\psi
+2kG^{-(k+1)}(g,g'H)\psi \nonumber\\
&+G^{-(k+1)}\Div(g)(g,H)\psi
+\frac{2}{\sigma^2}G^{-(k+1)}(g,f)(g,H)\psi
-\frac{2}{\sigma^2}G^{-k}(f,H)\psi\Big]d\mu_\infty. \nonumber
\end{align}
\end{lemma}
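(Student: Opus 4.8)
The plan is to deduce both identities from the divergence (Green) theorem on the compact boundaryless hypersurface $\MM$, which asserts that $\int_\MM \Div_\MM(V)\,d\sigma_\MM=0$ for every smooth tangent vector field $V$ on $\MM$ (see~\cite[Chap.\ts II]{Sakai96tom}). For the first identity I would first record the pointwise relation $(\nabla_\MM\psi,H)=(\Pi_\MM\nabla\psi,H)=(\nabla\psi,\Pi_\MM H)$, using that $\Pi_\MM$ is the symmetric idempotent orthogonal projection. Applying the Green theorem to the tangent field $V=\psi\,\Pi_\MM H$ together with the Leibniz rule $\Div_\MM(\psi V)=\psi\Div_\MM(V)+(\nabla_\MM\psi,V)$ then gives $\int_\MM(\nabla_\MM\psi,\Pi_\MM H)\,d\sigma_\MM=-\int_\MM\psi\,\Div_\MM(\Pi_\MM H)\,d\sigma_\MM$, which is the claim. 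The only ambient computation needed here is to check that for a tangent field $V$ the intrinsic surface divergence agrees with the stated formula $\Div_\MM(V)=\Div(V)-G^{-1}(g,V'g)$, i.e.\ that the extra term subtracts exactly the normal contribution $(n,V'n)$ with $n=g/\abs{g}$; this is immediate since $G^{-1}(g,V'g)=(n,V'n)$.

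For the second identity the key observation is that the integrand on the left-hand side is precisely $G^{-k}\rho_\infty(\nabla_\MM\psi,H)$. Indeed, $G^{-k}\psi'H-G^{-(k+1)}(g,H)\psi'g=G^{-k}(\nabla\psi,\Pi_\MM H)=G^{-k}(\nabla_\MM\psi,H)$, so the left-hand side equals $\int_\MM(\nabla_\MM\psi,V)\,d\sigma_\MM$ with the tangent field $V:=G^{-k}\rho_\infty\,\Pi_\MM H$. I would then apply the first identity to obtain $-\int_\MM\psi\,\Div_\MM(V)\,d\sigma_\MM=-\int_\MM\psi\,\rho_\infty^{-1}\Div_\MM(V)\,d\mu_\infty$, so the right-hand side integrand must equal $-\rho_\infty^{-1}\Div_\MM(V)$. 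The scalar-times-tangent Leibniz rule gives $\Div_\MM(V)=G^{-k}\rho_\infty\Div_\MM(\Pi_\MM H)+(\nabla(G^{-k}\rho_\infty),\Pi_\MM H)$, which splits the problem into a gradient factor and a divergence factor.

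The remaining work is the explicit expansion of these two pieces, and this bookkeeping is where I expect the main difficulty to lie. For the gradient factor I would use the explicit density of the overdamped Langevin equation, $\rho_\infty=Z^{-1}\exp(-\tfrac{2}{\sigma^2}V)$ with $f=-\nabla V$, giving $\nabla\rho_\infty=\tfrac{2}{\sigma^2}\rho_\infty f$, together with $\nabla G=2g'g$ (recall $g=\nabla\zeta$, so $g'=\nabla^2\zeta$ is symmetric); hence $\rho_\infty^{-1}\nabla(G^{-k}\rho_\infty)=\tfrac{2}{\sigma^2}G^{-k}f-2kG^{-(k+1)}g'g$, and pairing against $\Pi_\MM H=H-G^{-1}(g,H)g$ produces exactly the terms carrying $\tfrac{2}{\sigma^2}(f,H)$, $\tfrac{2}{\sigma^2}(g,f)(g,H)$, $(g,g'H)$, and part of the $(g,g'g)(g,H)$ term. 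For the divergence factor I would expand $\Div_\MM(\Pi_\MM H)=\Div(\Pi_\MM H)-G^{-1}(g,(\Pi_\MM H)'g)$, carefully differentiating $G^{-1}(g,H)g$ and repeatedly exploiting the symmetry of $g'$ to identify $(g'g,H)=(g,g'H)$, $(H'^{T}g,g)=(g,H'g)$, and the $(g,g'g)$-contractions; this collapses to $\Div_\MM(\Pi_\MM H)=\Div(H)-G^{-1}\Div(g)(g,H)+G^{-2}(g,g'g)(g,H)-G^{-1}(g,H'g)$, supplying the terms $-G^{-k}\Div(H)$, $G^{-(k+1)}\Div(g)(g,H)$, $G^{-(k+1)}(g,H'g)$ and completing the $-(2k+1)G^{-(k+2)}(g,g'g)(g,H)$ coefficient. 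Collecting both contributions reproduces the seven terms on the right-hand side, so the main obstacle is purely the careful tracking of the $G^{-j}$ powers and the Hessian contractions rather than any conceptual step.
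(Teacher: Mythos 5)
Your proposal is correct and follows exactly the route the paper indicates (the lemma is presented there as a corollary of the Green theorem on the compact boundaryless manifold $\MM$, with the computational details left out): reduce both identities to $\int_\MM\Div_\MM(V)\,d\sigma_\MM=0$ for tangent fields $V$, take $V=G^{-k}\rho_\infty\Pi_\MM H$, and expand via the Leibniz rule using $\nabla\rho_\infty=\tfrac{2}{\sigma^2}\rho_\infty f$ and $\nabla G=2g'g$. I checked the bookkeeping — in particular your identity $\Div_\MM(\Pi_\MM H)=\Div(H)-G^{-1}\Div(g)(g,H)+G^{-2}(g,g'g)(g,H)-G^{-1}(g,H'g)$ and the resulting $-(2k+1)$ coefficient — and all seven terms of \eqref{equation:IPP_mu_infty} come out as stated.
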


For example, let us integrate by part the terms of order four w.r.t.~$\phi$ of the operator~$\AA_1\phi$ in equation~\eqref{equation:expansion_A1_order_4}.
Applying identity~\eqref{equation:IPP_mu_infty} with~$\psi=\frac{\sigma^4}{8}\Delta\phi'(e_i)$,~$H=e_i$ and~$k=0$, and then summing on~$i=1,\dots,d$ yields
\begin{align}
\label{equation:ex_IPP_1}
\int_{\MM} \Big[\frac{\sigma^4}{8}\Delta^2\phi
&-\frac{\sigma^4}{8}G^{-1}\Delta\phi''(g,g)\Big] d\mu_\infty
=
\int_\MM \Big[-\frac{\sigma^4}{8} G^{-2}(g,g'g)\Delta\phi'g\\
&+\frac{\sigma^4}{8} G^{-1}\Div(g)\Delta\phi'g
+\frac{\sigma^2}{4} G^{-1}(g,f)\Delta\phi'g
-\frac{\sigma^2}{4} \Delta\phi'f\Big]d\mu_\infty. \nonumber
\end{align}
We apply again~\eqref{equation:IPP_mu_infty} with~$\psi=\frac{\sigma^4}{8}\phi^{(3)}(g,g,e_i)$,~$H=e_i$ and~$k=1$, and then sum on~$i=1,\dots,d$ to get
\begin{align}
\label{equation:ex_IPP_2}
\int_{\MM} \Big[\frac{\sigma^4}{8}G^{-1}\Delta\phi''(g,g)
&-\frac{\sigma^4}{8}G^{-2}\phi^{(4)}(g,g,g,g)\Big] d\mu_\infty\\
&=
\int_\MM \Big[-\frac{\sigma^4}{4}G^{-1}\sum_i \phi^{(3)}(g,\partial_i g,ei)
+\frac{\sigma^4}{2}G^{-2}\phi^{(3)}(g,g,g'g) \nonumber\\
&-\frac{3\sigma^4}{8}G^{-3}(g,g'g)\phi^{(3)}(g,g,g)
+\frac{\sigma^4}{8}G^{-2}\Div(g)\phi^{(3)}(g,g,g) \nonumber\\
&+\frac{\sigma^2}{4}G^{-2}(g,f)\phi^{(3)}(g,g,g)
-\frac{\sigma^2}{4}G^{-1}\phi^{(3)}(g,g,f)\Big]d\mu_\infty. \nonumber
\end{align}
Substracting~\eqref{equation:ex_IPP_2} from~\eqref{equation:ex_IPP_1} allows to express 
$\int_\MM \AA_1\phi d\mu_\infty$ with derivatives of~$\phi$ of order strictly less than 4.
We iterate this method to obtain~$\int_\MM \AA_1\phi d\mu_\infty=\int_\MM \AA_1^0\phi d\mu_\infty$ where~$\AA_1^0$ is an operator of order one in~$\phi$, and then find sufficient conditions such that~$\AA_1^0=0$. This implies that~$\AA_1^*\rho_\infty=0$, and Theorem~\ref{theorem:operator_conditions_invariant_measure} then gives the order two for the invariant measure.
The computation of~$\AA_1^0$ is further detailed in Section~\ref{section:B_series}.

Although constructing methods of high weak order is not the main focus of this paper, considering the explicit formula for~$\AA_1$ and comparing with~$\LL^2/2$ (see Section~\ref{section:B_series} for their detailed expansion in B-series), one immediately obtains the following theorem for weak order two of accuracy.
\begin{theorem}[Runge-Kutta conditions for weak order two]
\label{theorem:weak_RK_conditions}
We consider a Runge-Kutta method of the form~\eqref{equation:defRK} and assume that it satisfies~\eqref{equation:consistency_conditions}.
If the following conditions are satisfied, then the integrator has weak order two:
$$\begin{array}{l}
b^T d=b^T c=b^T (\delta \diam c)=b^T d^{\diam 2}=b^T (\delta \diam d^{\diam 2})=\frac{1}{2},\\
\widehat{b}^T d=\widehat{b}^T c=\widehat{b}^T (\delta \diam c)=\widehat{b}^T d^{\diam 2}=\widehat{b}^T(\delta \diam d^{\diam 2})=\widehat{b}^T d^{\diam 3}=\widehat{b}^T(\delta \diam d^{\diam 3})=\frac{1}{2},\\
\widehat{b}^T (c \diam d)=\widehat{b}^T (\delta \diam c \diam d),\\
\widehat{b}^T (d \diam \widehat{A}d)=\frac{1}{8},\\
b^T (d \diam \widehat{A}((\ind-\delta)\diam d))=0,\\
\widehat{b}^T A((\ind-\delta)\diam d)=\widehat{b}^T (\delta\diam A((\ind-\delta)\diam d))=\frac{1}{4},\\
\widehat{b}^T (d \diam \widehat{A}c)=\widehat{b}^T (d \diam \widehat{A}d^{\diam 2})=\widehat{b}^T (d \diam \widehat{A}(\delta\diam d^{\diam 2}))=0,\\
\widehat{b}^T (d^{\diam 2} \diam \widehat{A}d)=\frac{1}{4},\\
\widehat{b}^T (c \diam \widehat{A}((\ind-\delta)\diam d))-\widehat{b}^T (d \diam \widehat{A}(\delta\diam d))-\widehat{b}^T (d \diam \widehat{A}(\delta\diam c))=\frac{1}{8},\\
\widehat{b}^T (d^{\diam 2} \diam \widehat{A}(\delta \diam d))+\widehat{b}^T (d \diam \widehat{A}(\delta \diam d))=\frac{1}{8},\\
\widehat{b}^T (d \diam (\widehat{A}((\ind-\delta) \diam d))^{\diam 2})=0,\\
\widehat{b}^T (d \diam (\widehat{A} d)^{\diam 2})+3 \widehat{b}^T (d \diam \widehat{A} (d \diam \widehat{A} ((\ind-\delta)\diam d)))=\frac{1}{8}.
\end{array}$$
%$$\begin{array}{l}
%\widehat{b}^T d=ad=\widehat{b}^T c=\widehat{b}^T (\delta \diam c)=a c=a\delta c=\widehat{b}^T d^{\diam 2}=\widehat{b}^T (\delta \diam d^{\diam 2})=b^T d^{\diam 2}=b^T (\delta \diam d^{\diam 2})=\widehat{b}^T d^{\diam 3}=\widehat{b}^T (\delta \diam d^{\diam 3})=\frac{1}{2}\\
%\widehat{b}^T (c \diam d)=\widehat{b}^T (\delta\diam c\diam d)\\
%\widehat{b}^T (d \diam \widehat{A}d)=\frac{1}{8}\\
%\widehat{b}^T (d^{\diam 2}\diam \widehat{A}d)=\frac{1}{4}\\
%\widehat{b}^T (d \diam \widehat{A}d^{\diam 2})=\widehat{b}^T (d \diam \widehat{A}(\delta \diam d^{\diam 2}))=0\\
%\widehat{b}^T (d \diam \widehat{A}c)=0\\
%\widehat{b}^T (d \diam \widehat{A}(\delta \diam d))+\sum\widehat{a}_{sj}\widehat{a}_{jp} \delta_p d_j^2 d_p=\frac{1}{8}\\
%\widehat{b}^T (c \diam \widehat{A}((\ind-\delta)\diam d)=\widehat{b}^T (d \diam \widehat{A}(\delta \diam d))+\widehat{b}^T (d \diam \widehat{A}(\delta \diam c))+\frac{1}{8}\\
%b^T (d \diam \widehat{A}((\ind-\delta)\diam d)))=0\\
%\sum \widehat{a}_{sj}a_{jp} (1-\delta_p) d_p=\sum \widehat{a}_{sj}a_{jp} \delta_j (1-\delta_p) d_p=\frac{1}{4}\\
%\widehat{b}^T (d \diam (\widehat{A}((\ind-\delta) \diam d))^{\diam 2})=0\\
%\widehat{b}^T (d \diam (\widehat{A} d)^{\diam 2})+3 \widehat{b}^T (d \diam \widehat{A} (d \diam \widehat{A} ((\ind-\delta)\diam d))) =\frac{1}{8}
%\end{array}$$
%The sums are over all involved indices among~$j$,~$p$,~$q$ and we use the notation~$x y^1\dots y^n=\sum x_{sj}y^1_j\dots y^n_j$ for~$x\in \R^{d\times d}$ and~$y^1$, ...,~$y^n\in\R^d$.
\end{theorem}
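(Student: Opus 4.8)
The plan is to invoke the elementary characterization of weak order recalled just before Theorem~\ref{theorem:operator_conditions_invariant_measure}: under Assumptions~\ref{assumption:ellipticity} and~\ref{assumption:dvp_num}, comparing the expansions~\eqref{equation:dvp_exa} and~\eqref{equation:dvp_num} shows that the scheme has weak order two as soon as $\AA_0=\LL$ and $\AA_1=\LL^2/2$. The first identity is precisely the consistency statement of Proposition~\ref{proposition:consistency}, which holds under~\eqref{equation:consistency_conditions}. Hence the whole proof reduces to imposing the single operator equation $\AA_1=\LL^2/2$ and reading off the resulting constraints on the Runge-Kutta coefficients.

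First I would compute $\AA_1$ explicitly, following verbatim the procedure used for $\AA_0$ in the proof of Proposition~\ref{proposition:consistency}, but carried one order further. Concretely, I expand each internal stage $Y_i$ and each multiplier $\lambda_i=\sqrt{h}\lambda_{1/2,i}+h\lambda_{1,i}+h^{3/2}\lambda_{3/2,i}+h^2\lambda_{2,i}+\dots$ in powers of $\sqrt{h}$, substitute into $\E[\phi(Y_s)]$, Taylor-expand $\phi$, and collect the coefficient of $h^2$. Because the multiplier coefficients are determined order by order by the constraint $\zeta(Y_i)=0$ exactly as in Proposition~\ref{proposition:consistency}, this step is purely mechanical; the only new ingredient is that one must now use the fourth moments of the Gaussian vector $\xi$ in addition to the second moments, all odd moments still vanishing. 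The outcome is the fourth-order differential operator whose leading part is displayed in~\eqref{equation:expansion_A1_order_4} and whose full expression is recorded in Section~\ref{section:B_series}.

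Next I would compute $\LL^2/2$ by composing the generator~\eqref{equation:generator_L_Langevin} with itself. Both $\AA_1$ and $\LL^2/2$ are then fourth-order differential operators in $\phi$ whose coefficients are explicit functions of $f$, $g$, $G$ and their derivatives. The identity $\AA_1=\LL^2/2$ is imposed by matching these operators term by term: each distinct elementary differential of $\phi$, equivalently each exotic aromatic tree of Section~\ref{section:B_series}, contributes one scalar equation relating the Runge-Kutta coefficients to the corresponding coefficient of $\LL^2/2$. Collecting all these scalar equations yields exactly the list displayed in the theorem, and since these equations are by construction the necessary and sufficient conditions for the two B-series to coincide, assuming them gives $\AA_1=\LL^2/2$, hence weak order two.

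The main difficulty is not conceptual but combinatorial: the order-four operator $\AA_1$ contains a large number of elementary differentials, many of them involving the derivatives of $g$ and negative powers of the Gram function $G$, and the contributions of the four multiplier coefficients $\lambda_{1/2,i},\dots,\lambda_{2,i}$ together with the Gaussian moments must be tracked without error. This is precisely what the extension of the exotic aromatic B-series formalism developed in Section~\ref{section:B_series} is designed to handle: it provides a basis of elementary differentials indexed by trees, so that the expansions of $\AA_1$ and of $\LL^2/2$ become two B-series and the matching reduces to comparing coefficients of identical trees. With that bookkeeping in place, the conditions of the theorem follow from straightforward, if lengthy, identification.
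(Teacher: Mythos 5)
Your proposal is correct and follows essentially the same route as the paper: establish $\AA_0=\LL$ via Proposition~\ref{proposition:consistency}, compute $\AA_1$ by pushing the stage and multiplier expansions one order further, and match it against $\LL^2/2$ coefficient by coefficient in the exotic aromatic B-series basis of Section~\ref{section:B_series} (Table 2). The only minor caveat is your claim that the resulting equations are \emph{necessary}: since distinct exotic aromatic forests can represent the same elementary differential when $f$ and $g$ are gradients, tree-by-tree matching yields sufficient conditions, which is all the theorem asserts.
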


\begin{remark}
For~$\delta=\ind$, the weak order two conditions of Theorem~\ref{theorem:weak_RK_conditions} have no solution, which is in contrast with the invariant measure case presented in Theorem~\ref{theorem:RK_conditions}.
Indeed, the condition~$\widehat{b}^T A((\ind-\delta)\diam d)=\frac{1}{4}$ cannot be fulfilled if we fix~$\delta=\ind$.
\end{remark}

\subsection{Illustrative examples of high order Runge-Kutta methods on manifolds}
\label{section:example_methods}

In this section, we present several examples of high order Runge-Kutta methods of the form~\eqref{equation:defRK}. The purpose of these examples is to illustrate our analysis, and deriving new integrators with small error constant, favourable stability properties, small variance and fast convergence to equilibrium is a challenging open question which is not addressed in the present paper.
First, we introduce a method that has order two for sampling the invariant measure of the constrained Langevin dynamics~\eqref{equation:projected_Langevin}.
Since there are many solutions to the order conditions, we obtain this integrator by solving numerically an optimization problem: we minimize the absolute values of the coefficients of the method under the constraints given by the order conditions of Theorem~\ref{theorem:RK_conditions}.
This method is explicit in~$f$ and uses only three evaluations of~$f$ per step.
It is defined by the following Butcher tableau
$$
\begin{array}
{c|cccc||c|cccc||c}
0 & 0 & 0 & 0 & 0 &
 1 & 1 & 0 & 0 & 0 & d_1\\
c_2 & c_2 & 0 & 0 & 0 &
1 & \widehat{a}_{21} & \widehat{a}_{22} & 0 & 0 & d_2\\
c_3 & 0 & c_3 & 0 & 0 &
1 & \widehat{a}_{31} & \widehat{a}_{32} & \widehat{a}_{33} & 0 & d_3\\
1 & \widehat{a}_{41} & \widehat{a}_{42} & \widehat{a}_{43} & 0 &
1 & \widehat{a}_{41} & \widehat{a}_{42} & \widehat{a}_{43} & 0 & 1\\
\hline
 & \widehat{a}_{41} & \widehat{a}_{42} & \widehat{a}_{43} & 0 &
 & \widehat{a}_{41} & \widehat{a}_{42} & \widehat{a}_{43} & 0 & 
\end{array}
$$
or by the associated set of equations
\begin{align}
\label{equation:RK_order_2}
  Y_1&=X_n+\sigma \sqrt{h} d_1 \xi_n + \lambda_1 g(Y_1), \nonumber\\
  Y_2&=X_n+hc_2 f(Y_1)+\sigma \sqrt{h} d_2 \xi_n + \lambda_2 \left[\widehat{a}_{21} g(Y_1)+\widehat{a}_{22}g(Y_2)\right], \nonumber\\
  Y_3&=X_n+hc_3 f(Y_2)+\sigma \sqrt{h} d_3 \xi_n + \lambda_3 \left[\widehat{a}_{31} g(Y_1)+\widehat{a}_{32} g(Y_2)+\widehat{a}_{33}g(Y_3)\right], \nonumber\\
  X_{n+1}&=X_n+h\sum_{j=1}^3 \widehat{a}_{4j} f(Y_j)+\sigma \sqrt{h} \xi_n + \lambda_4 \sum_{j=1}^3 \widehat{a}_{4j} g(Y_j), \\
\text{where }&\lambda_1,\lambda_2,\lambda_3,\lambda_4 \text{ are such that }\zeta(Y_1)=\zeta(Y_2)=\zeta(Y_3)=\zeta(X_{n+1})=0, \nonumber
\end{align}
and with the values of~$c_i$,~$d_i$,~$\widehat{a}_{ij}$ given in Appendix~\ref{section:RK_coefficients}.
To implement one step of this scheme, we apply a few iterations of the Newton method to find the projections on~$\MM$.
We emphasize that if the stepsize~$h$ is not small enough, the fixed point problems of finding~$\lambda_i$ such that~$\zeta(Y_i)=0$ may not be well defined, leading to diverging Newton iterations.
%The stepsize~$h$ has to be chosen in agreement with the geometry of the manifold~$\MM$.
Following Remark~\ref{remark:discrete_rv}, we replace the standard Gaussian random vectors~$\xi$ in~\eqref{equation:defRK} by independent bounded discrete random vectors~$\widehat{\xi}$ that satisfy~\eqref{equation:discrete_rv}.
This way, the order two for the invariant measure is preserved and the method is well-posed for~$h$ small enough.

With the same methodology we used to obtain the order conditions of Theorem~\ref{theorem:RK_conditions} and Theorem~\ref{theorem:weak_RK_conditions}, and with the expressions of~$\AA_1\phi$ and~$\AA_1^0\phi$ (see Section~\ref{section:B_series} for further details), we also get classes of Runge-Kutta integrators and their order conditions for the following specific subproblems.
\paragraph{Euclidean case~$\R^d$.\!\!}
Fixing~$g=0$ in the expressions of~$\AA_1\phi$ and~$\AA_1^0\phi$ yields the order two conditions in the weak sense and for the invariant measure in~$\R^d$ as given in~\cite[Tables 1-2]{Laurent20eab}.
\paragraph{Deterministic case.\!\!}
Fixing~$\sigma=0$ in the expression of~$\AA_1\phi$ yields the order conditions for approximating the solution of ODEs of the form~$\dot x=\Pi_\MM(x)f(x)$, where~$f$ is a gradient.
Note that this equation can be rewritten as the following differential algebraic equation (DAE) of index two (see~\cite[Chap.\ts VII]{Hairer10sod}):
\begin{align}
\label{equation:DAE_problem}
\dot{x}&=f(x)+\lambda g(x),\\
0&=\zeta(x). \nonumber
\end{align}
We obtain a class of deterministic Runge-Kutta methods for solving DAEs of the form~\eqref{equation:DAE_problem} by setting~$\sigma=0$ in~\eqref{equation:defRK}. A Runge-Kutta method of this form is consistent if~$b^T \ind=1$, and has order two if~$\widehat{b}^T c=\widehat{b}^T (\delta \diam c)=b^T c=b^T (\delta \diam c)=1/2$.
For instance, an order two method for solving ODEs of the form~\eqref{equation:DAE_problem} is
$$
  X_{n+1}=X_n+h\frac{f(X_n)+f(X_{n+1})}{2}+ \lambda \frac{g(X_n)+g(X_{n+1})}{2},\quad
  \zeta(X_{n+1})=0.
$$
\paragraph{Spherical case.\!\!}
In the simple case where~$\MM$ is the unit sphere in~$\R^d$ (that is, when the constraint is of the form~$\zeta(x)=(\abs{x}^2-1)/2$ and~$g(x)=x$), the consistency conditions~\eqref{equation:consistency_conditions} reduce to~$b^T \ind=d_s=1$.
The weak order two conditions of Theorem~\ref{theorem:weak_RK_conditions} reduce to the following conditions:
$$\begin{array}{l}
b^T d=b^T c=b^T (\delta \diam c)=b^T d^{\diam 2}=b^T (\delta \diam d^{\diam 2})=\widehat{b}^T d=\widehat{b}^T c=\frac{1}{2},\\
\widehat{b}^T (d \diam \widehat{A}d)=\frac{1}{8},\\
\widehat{b}^T A((\ind-\delta)\diam d)=\frac{1}{4},\\
\widehat{b}^T (d \diam \widehat{A}c)=0.
\end{array}$$
%$$\begin{array}{l}
%\widehat{b}^T d=ad=\widehat{b}^T c=a c=a\delta c=b^T d^{\diam 2}=b^T (\delta \diam d^{\diam 2})=\frac{1}{2}\\
%\widehat{b}^T (d \diam \widehat{A}d)=\frac{1}{8}\\
%\widehat{b}^T (d \diam \widehat{A}c)=0\\
%\sum \widehat{a}_{sj}a_{jp} (1-\delta_p) d_p=\frac{1}{4}
%\end{array}$$
On the other hand, the order two conditions for the invariant measure of Theorem~\ref{theorem:RK_conditions} on the sphere are the following:
$$\begin{array}{l}
\widehat{b}^T d=b^T d,\\
b^T c=b^T (\delta \diam c)=b^T d^{\diam 2}=b^T (\delta \diam d^{\diam 2})=\widehat{b}^T c=2\widehat{b}^T d-\frac{1}{2},\\
\widehat{b}^T (d \diam \widehat{A}c)=2 \widehat{b}^T (d \diam \widehat{A}d)+(\widehat{b}^T d)^2-2 \widehat{b}^T d +\frac{1}{2},\\
\widehat{b}^T A((\delta-\ind)\diam d)=(\widehat{b}^T d)^2-2 \widehat{b}^T d +\frac{1}{2}.
\end{array}$$
For example, the following integrator has order two for the invariant measure if~$\MM$ is a sphere:
\begin{align*}
  Y_1&=X_n+h\left(\frac{3}{2}-\sqrt{2}\right) f(Y_2) +\sigma \sqrt{h} \left(1-\frac{\sqrt{2}}{2}\right) \xi_n + \lambda_1 (2 Y_1-Y_2), \quad \zeta(Y_1)=0,\\
  Y_2&=X_n+h f(Y_1) +\sigma \sqrt{h} \xi_n + \lambda_2 Y_1, \quad \zeta(Y_2)=0,\\
  X_{n+1}&=Y_2.
\end{align*}
\paragraph{Brownian motions on manifolds.\!\!}
Runge-Kutta methods of the form~\eqref{equation:defRK} can also be used for simulating a Brownian motion on a manifold (see~\cite[Chap.\ts III]{Hsu02sao}) by solving numerically
\begin{equation}
\label{equation:Brownian_walk}
dX=\Pi_\MM(X) \circ dW,\quad X(0)=X_0\in \MM.
\end{equation}
We recall that in the context of~$\R^d$, the Euler-Maruyama integrator is exact for approximating a Brownian motion in law. However, in the context of manifolds, there are no exact Runge-Kutta integrators for simulating a Brownian motion on~$\MM$ in general. In particular, the Euler scheme~\eqref{equation:IE} only has weak order one for solving~\eqref{equation:Brownian_walk} in general.
Fixing~$f=0$ in~\eqref{equation:defRK} yields a class of Runge-Kutta methods for solving~\eqref{equation:Brownian_walk}. The consistency conditions are~$d_s=1$ and~$\widehat{b}^T d=\widehat{b}^T (\delta \diam d)$.
The conditions for order two for the invariant measure (respectively for weak order two) of such a Runge-Kutta method are obtained by deleting the order conditions in Theorem~\ref{theorem:RK_conditions} (respectively in Theorem~\ref{theorem:weak_RK_conditions}) that involve~$A$,~$b$ or~$c$.
In the specific case where~$\MM$ is a sphere, the consistency conditions~\eqref{equation:consistency_conditions} become~$d_s=1$ and the weak order two conditions of Theorem~\ref{theorem:weak_RK_conditions} reduce to the two following conditions
$$\widehat{b}^T d=\frac{1}{2}, \quad \widehat{b}^T (d \diam \widehat{A}d)=\frac{1}{8}.$$
For example, a weak order two method for simulating a Brownian motion on a sphere is
$$
  X_{n+1}=X_n+\sqrt{h}\xi_n+ \lambda \frac{3 X_n +\sqrt{h}\xi_n + X_{n+1}}{4},\quad
  \zeta(X_{n+1})=0.
$$
In addition, there are no additional order two conditions for the invariant measure, that is, any consistent integrator, such as the Euler scheme~\eqref{equation:IE}, has at least order two for the invariant measure on the sphere.

\section{Exotic aromatic B-series for computing order conditions}
\label{section:B_series}

As described in the introduction, B-series were introduced to tackle the calculations of order conditions of ODEs by representing Taylor expansions with trees.
In~\cite{Chartier07pfi}, an extension of the original B-series, called aromatic B-series, was used to study volume-preserving integrators. It allowed in particular to represent the divergence of a B-series. B-series and aromatic B-series were also studied later in~\cite{MuntheKaas16abs,McLachlan16bsm,Floystad20tup} for their geometric properties, and in~\cite{Chartier10aso,Bogfjellmo19aso} for their algebraic structure of Hopf algebras.
In~\cite{Laurent20eab}, a new formalism of B-series, called exotic aromatic B-series, was introduced for computing order conditions for sampling the invariant measure of SDEs in~$\R^d$. It added a new kind of edge, called \emph{liana}, to the aromatic trees in order to represent new terms such as the Laplacian of an aromatic B-series.
In this section, we extend the formalism of exotic aromatic B-series by allowing the representation of scalar products, and show that the operators~$\LL^j$ and~$\AA_j$ can be represented conveniently in the form of B-series. We also rewrite the integration by parts formula~\eqref{equation:IPP_mu_infty} as a straightforward process on graphs, and apply it to compute~$\AA_1^0$.

We consider graphs~$\gamma=(V,E,L)$ where~$V$ is the set of nodes,~$E$ the set of edges and~$L$ the set of lianas.
We split the set of edges into~$E=E_0\cup E_S$ where~$E_0$ are the standard oriented edges as defined in~\cite{Laurent20eab}, and where~$E_S$ is a new set of non-oriented edges represented as double horizontal straight lines. If~$(v,w)=(w,v)\in E_S$, we consider this edge as an outgoing edge for both~$v$ and~$w$, but~$v$ and~$w$ are not predecessors of each other. If~$(v,w)\in E_S$, we denote~$S(v)=w$ and~$S(v)=v$ otherwise. 
We consider graphs where each node has exactly one outgoing edge, except exactly one node, called the root~$r$, that has none.
If we consider only the graph~$(V,E)$, where we erase the lianas, it can be decomposed in two kinds of connected components: one that contains the root, that we name the rooted tree, and the other components that we name aromas.
We decompose the set of nodes in~$V=V_f\cup V_g\cup\{r\}$ where~$V_f$ are the nodes representing the function~$f$ and are represented with black disks (respectively~$V_g$ represent the function~$g$ and are drawn with white disks).
We write~$N_f(\gamma)$ the number of elements of~$V_f$ (respectively~$N_g(\gamma)$ the number of elements of~$V_g$) and~$N_l(\gamma)$ the number of lianas.
The order of a directed graph~$\gamma=(V,E,L)$ is defined as
$$\abs{\gamma} =N_f(\gamma) +N_l(\gamma) +\frac{N_g(\gamma)}{2} -\abs{E_S}.$$
For instance, the graph~$\gamma=(V,E,L)$ with 
\begin{equation}
\label{equation:example_forest}
V_f=\{v_2,v_5,v_6\}, \quad V_g=\{v_1,v_3,v_4,v_7\}, \quad E_S=\{(v_6,v_7)\},
\end{equation}
$$E_0=\{(v_1,r),(v_2,v_1),(v_3,r),(v_4,v_4),(v_5,v_4)\},
\quad L=\{(v_2,v_2),(v_3,v_5),(v_5,v_6)\},$$
satisfies~$\abs{\gamma}=7$ and is represented as
$$\includegraphics[scale=1]{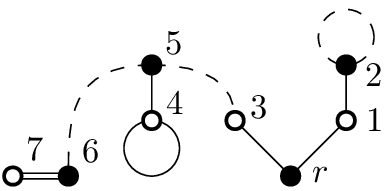}.$$
We say that two directed graphs~$(V^1,E^1,L^1)$ and~$(V^2,E^2,L^2)$ are equivalent if there exists a bijection~$\varphi:V^1\to V^2$ such that
$$\varphi(V_f^1)=V_f^2, \quad \varphi(V_g^1)=V_g^2, \quad (\varphi\times \varphi)(E^1)=E^2, \quad (\varphi\times \varphi)(E_S^1)=E_S^2, \quad (\varphi\times \varphi)(L^1)=L^2.$$
We call exotic aromatic forests the equivalence classes of these directed graphs~$\gamma=(V,E,L)$, and we denote~$\EE\AA\TT$ the set of exotic aromatic forests.
In addition, we need a different set of rooted forests where the root is in~$V_f$ or~$V_g$. We call them exotic aromatic vector fields and gather them together in the set~$\EE\AA\VV$.
The elementary differential associated to an exotic aromatic forest is given by the following definition.
\begin{definition}
%\label{elementary_differential}
Let~$\gamma=(V,E,L)\in \EE\AA\TT$, and let~$f$,~$g\colon \R^d \to \R^d$ and~$\phi:\R^d\to \R$ be smooth functions. We denote~$l_1,\dots ,l_s$ the elements of~$L$,~$v_1,\dots ,v_m$ the elements of~$V\smallsetminus\{r\}$ and~$\delta_{i,j}$ the Kronecker symbol ($\delta_{i,j}=1$ if~$i=j$,~$\delta_{i,j}=0$ else). We use the notation for~$v\in V$,~$I_{\pi(v)}=(i_{q_1},\dots ,i_{q_s})$ where~$\pi(v)=\{q_1,\dots ,q_s\}$ are the predecessors of~$v$, and~$J_{\Gamma(v)}=(j_{l_{x_1}},\dots ,j_{l_{x_t}})$ where~$\Gamma(v)=\{l_{x_1},\dots ,l_{x_t}\}$ are the lianas linked to~$v$.
Then~$F(\gamma)$ is defined as
\begin{align*}
F(\gamma)(f,g,\phi)&=\sigma^{2(\abs{\gamma} -N_f(\gamma))} G^{-N_g(\gamma)/2}\sum_{i_{v_1},\dots ,i_{v_m}=1}^d \sum_{j_{l_1},\dots ,j_{l_s}=1}^d 
\left(\prod_{v\in V_f} \delta_{i_v,i_{S(v)}} \partial_{I_{\pi(v)}} \partial_{J_{\Gamma(v)}} f_{i_v}\right)\\
&\cdot\left(\prod_{v\in V_g} \delta_{i_v,i_{S(v)}} \partial_{I_{\pi(v)}} \partial_{J_{\Gamma(v)}} g_{i_v}\right)
 \partial_{I_{\pi(r)}} \partial_{J_{\Gamma(r)}} \phi.
\end{align*}
\end{definition}

For example, the differential associated to the exotic aromatic forest~$\gamma$ given by~\eqref{equation:example_forest} is
\begin{align*}
F(\gamma)(f,g,\phi)&=\sigma^{8} G^{-2}\sum_{i_{v_1},\dots ,i_{v_7}=1}^d \sum_{j_{l_1},\dots ,j_{l_3}=1}^d
\partial_{j_{l_1}j_{l_1}}f_{i_{v_2}}
\partial_{j_{l_2}j_{l_3}}f_{i_{v_5}}
\delta_{i_{v_6},i_{v_7}} \partial_{j_{l_3}}f_{i_{v_6}}\\
&\cdot \partial_{i_{v_2}}g_{i_{v_1}}
\partial_{j_{l_2}}g_{i_{v_3}}
\partial_{i_{v_4}i_{v_5}}g_{i_{v_4}}
\delta_{i_{v_7},i_{v_6}} g_{i_{v_7}}
\partial_{i_{v_1}i_{v_3}}\phi
.
\end{align*}
We extend the definition of~$F$ on~$\Span(\EE\AA\TT)$ by linearity and write, for the sake of simplicity,~$F(\gamma)(\phi)$ instead of~$F(\gamma)(f,g,\phi)$.
An exotic aromatic B-series is a formal series indexed over~$\EE\AA\TT$ of the form
$$B(a)(\phi)=\sum_{\gamma\in\EE\AA\TT} h^{\abs{\gamma}}a(\gamma)F(\gamma)(\phi).$$

\begin{remark}
As we assumed that the functions~$f$ and~$g$ are gradients, multiple exotic aromatic forests can represent the same differential. We do not detail here the method to identify two such forests  as it is similar to~\cite[Prop.\ts 4.7]{Laurent20eab} in the context of~$\R^d$.
\end{remark}

The following result states that the operators~$\LL^j/j!$ and~$\AA_j$ can be written with exotic aromatic forests.
We omit the proof for the sake of brevity as it is similar to~\cite[Thm.\ts 4.1]{Laurent20eab}.

\begin{proposition}
Take a Runge-Kutta method of the form~\eqref{equation:defRK}, then the expansions~\eqref{equation:dvp_exa} and~\eqref{equation:dvp_num} can be formally written with exotic aromatic B-series, that is, there exists two maps~$e$ and~$a$ over~$\EE\AA\TT$ such that
$$\E[\phi(X(h))|X(0)=x]=B(e)(\phi)(x), \quad \E[\phi(X_1)|X_0=x]=B(a)(\phi)(x),$$
and where the operators are given by
$$\frac{\LL^j}{j!}=F\bigg(\sum_{\gamma\in\EE\AA\TT,\abs{\gamma}=j} e(\gamma)\gamma\bigg), \quad \AA_{j-1}=F\bigg(\sum_{\gamma\in\EE\AA\TT,\abs{\gamma}=j} a(\gamma)\gamma\bigg).$$
If~$e(\gamma)=a(\gamma)$ for all~$\gamma\in\EE\AA\TT$ with~$1\leq\abs{\gamma}\leq p$, then the integrator has at least weak order~$p$.
\end{proposition}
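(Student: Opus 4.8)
The plan is to follow the structure of the $\R^d$ proof in \cite[Thm.\ts 4.1]{Laurent20eab}, adapting it to the two new features of the present formalism: the scalar-product edges of $E_S$ (arising from the projection $\Pi_\MM = I - G^{-1}gg^T$ and from the Lagrange multipliers) and the order contributions they induce. First I would show that the generator $\LL$ of \eqref{equation:generator_L_Langevin} is itself an exotic aromatic B-series of order one. Each summand of \eqref{equation:generator_L_Langevin}, for instance $G^{-1}(g,f)\phi'g$ or $G^{-1}\phi''(g,g)$, is by inspection the elementary differential $F(\gamma)$ of a single forest $\gamma$ with $\abs{\gamma}=1$: the factors $G^{-1}$ and the scalar products against $g$ are encoded by white nodes and by edges of $E_S$, while the Laplacian $\Delta\phi$ is encoded by a liana attached to the root. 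This pins down the coefficients $e(\gamma)$ for $\abs{\gamma}=1$ and gives $\AA_0 = \LL = F(\sum_{\abs{\gamma}=1} e(\gamma)\gamma)$.

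Next I would establish the algebraic closure that makes the formalism work, namely that $\Span(\EE\AA\TT)$ is stable under the elementary operations appearing in $\LL$: directional differentiation along $f$ and $g$, contraction of a pair of indices (creating a liana), and contraction against $g$ (creating an edge of $E_S$). Granting this, the composition $\LL\circ B(\cdot)$ maps a B-series to a B-series of one higher order, and a straightforward induction yields that $\LL^j/j!$ is represented by the forests of order $j$, the factorial being inherited from the Taylor expansion \eqref{equation:dvp_exa}. This defines $e$ on all of $\EE\AA\TT$ and gives the first displayed formula.

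For the numerical expansion I would expand each internal stage of \eqref{equation:defRK} in powers of $\sqrt{h}$. Using Lemma~\ref{lemma:existence_fixed_point}, which guarantees $Y_i = X_n + \OO(\sqrt{h})$ and $\lambda_i = \OO(\sqrt{h})$, the fixed-point equations are solved recursively, and the constraints $\zeta(Y_i)=0$ determine each $\lambda_i$ as a series whose coefficients are the scalar products $(g,\xi)$, $(g,f)$, $(g,g'g)$, and so on, exactly as in the proof of Proposition~\ref{proposition:consistency}. Taylor-expanding $\phi(Y_s)$ around $X_n$ and taking the expectation over the Gaussian vector $\xi$ then produces, via Isserlis' theorem, a sum over all pairings of the copies of $\xi$: each pairing contributes a Kronecker factor $\delta_{\cdot,\cdot}$, which is precisely a liana, while the surviving scalar products against $g$ become the edges of $E_S$. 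This shows $\E[\phi(X_1)|X_0=x] = B(a)(\phi)(x)$ and fixes $a$, yielding the second formula.

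The main obstacle is the closure and grading bookkeeping for the new edge type: one must check that the contractions induced by $\Pi_\MM$ and by the Lagrange multipliers send forests to forests consistently, and that the grading $\abs{\gamma} = N_f(\gamma) + N_l(\gamma) + N_g(\gamma)/2 - \abs{E_S}$ records the correct power of $h$, taking into account that the noise enters at order $\sqrt{h}$ and that the factors $G^{-1}$ and the scalar products against $g$ account for the half-integer and negative corrections. Once this is verified, the final claim is immediate: if $e(\gamma)=a(\gamma)$ for all $\gamma$ with $1\leq\abs{\gamma}\leq p$, then $\AA_{j-1}=\LL^j/j!$ for $j=1,\dots,p$, so the expansions \eqref{equation:dvp_exa} and \eqref{equation:dvp_num} agree up to $\OO(h^{p+1})$, and the integrator has at least weak order $p$.
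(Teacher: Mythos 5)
Your proposal is correct and follows essentially the same route as the paper, which omits the proof and refers to the analogous result in~\cite[Thm.\ts 4.1]{Laurent20eab}: representing~$\LL$ by order-one forests, using closure of~$\Span(\EE\AA\TT)$ under the generator's operations to get~$\LL^j/j!$ inductively, and expanding the stages in powers of~$\sqrt{h}$ with Isserlis pairings producing the lianas and the~$E_S$ edges, exactly as in the paper's own computation of~$\AA_0$ in Proposition~\ref{proposition:consistency}. The bookkeeping you flag for the grading~$\abs{\gamma}=N_f+N_l+N_g/2-\abs{E_S}$ is indeed the only genuinely new verification relative to the~$\R^d$ case, and your sketch handles it correctly.
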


For example, the operator~$\LL$ in~\eqref{equation:generator_L_Langevin} can be rewritten with exotic aromatic forests as
\begin{align*}
\LL\phi
&=\phi'f
-G^{-1}(g,f)\phi'g
-\frac{\sigma^2}{2}G^{-1}\Div(g)\phi'g
+\frac{\sigma^2}{2}G^{-2}(g,g'g)\phi'g
+\frac{\sigma^2}{2}\Delta \phi
-\frac{\sigma^2}{2}G^{-1}\phi''(g,g)\\
&=F\Big(
\eatree2001
-\aroma2101\ \eatree2101
-\frac{1}{2}\ \aroma1101\ \eatree2101
+\frac{1}{2}\ \aroma3301\ \eatree2101
+\frac{1}{2}\ \eatree1011
-\frac{1}{2}\ \eatree3201
\Big)(\phi).
\end{align*}
We present in Table 2 (see Appendix~\ref{section:Tables_B_series}) the decomposition in exotic aromatic forests of the operators~$\LL^2\phi/2=\LL(\LL\phi)/2$ and~$\AA_1\phi$ under the consistency condition~\eqref{equation:consistency_conditions}.

\begin{remark}
If we replace the functions~$g$ and~$\phi$ by~$f$ and fix~$\sigma=G=1$, the newly obtained exotic aromatic B-series satisfy an isometric equivariance property, that is, they stay unchanged when applying an isometric coordinate transformation.
It was proved in~\cite{MuntheKaas16abs} that, under a condition of locality, aromatic B-series are exactly the affine equivariant methods, that is, the maps that stay unchanged when applying an affine coordinate transformation.
Analogously, it would be interesting to make a link between the isometric equivariant maps and the exotic aromatic B-series.
\end{remark}

In the spirit of the Butcher product on trees~\cite[Chap.\ts III]{Hairer06gni}, we introduce a few notations for writing with ease different operations on forests.
\begin{notation*}
Let~$\gamma$ be an exotic aromatic forest/vector field,~$\tau$ be an exotic aromatic vector field and~$v$ a node of~$\gamma$, then we define the following operators on forests.
\begin{enumerate}
\item~$\includegraphics[scale=0.5]{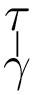}$: sum of all exotic aromatic forests/vector fields obtained by linking the root of~$\tau$ to a node of~$\gamma$ with a new edge in~$E_0$
\item~$\includegraphics[scale=0.5]{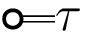}$ (resp.~$\includegraphics[scale=0.5]{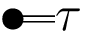}$): aroma obtained by linking the root of~$\tau$ to a white node (resp.\ts a black node) with a new edge in~$E_S$
\item~$\includegraphics[scale=0.5]{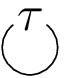}$: sum of all aromas obtained by linking the root of~$\tau$ to a node of~$\tau$ with a new edge in~$E_0$
\item~$\includegraphics[scale=0.5]{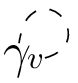}$: sum of all exotic aromatic forests/vector fields obtained by linking the node~$v$ to a node of~$\gamma$ with a new liana
\item~$\includegraphics[scale=0.5]{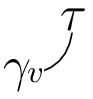}$: forest obtained by linking the root of~$\tau$ to the node~$v$ of~$\gamma$ with a new edge in~$E_0$
\end{enumerate}
For simplicity, we combine multiple operations on a same forest as in~$\includegraphics[scale=0.5]{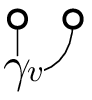}$ and~$\includegraphics[scale=0.5]{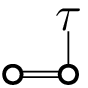}$, where operation 1 is always applied first.
\end{notation*}

For example, let~$\gamma=\eatree3201$,~$\tau=\includegraphics[scale=0.5]{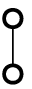}$ and~$v=r$ the root of~$\gamma$, then we get
$$
\includegraphics[scale=0.5]{Special_trees/Deriv}
=\eatree5402+2\ \eatree5403,
\quad
\includegraphics[scale=0.5]{Special_trees/Scal_g}
=\aroma3301,
%\quad
%\includegraphics[scale=0.5]{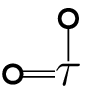}
%=\aroma4401+\aroma4402,
\quad
\includegraphics[scale=0.5]{Special_trees/Div}
=\aroma2201 + \aroma2202,
\quad
\includegraphics[scale=0.5]{Special_trees/Deriv_liana}
=\eatree3211 + 2\ \eatree3212,
\quad
\includegraphics[scale=0.5]{Special_trees/Plug}
=\eatree5402.
$$

The integration by parts~\eqref{equation:IPP_mu_infty} can be rewritten conveniently with exotic aromatic forests.
\begin{lemma}
\label{lemma:IPP_B_series}
Let~$\gamma \in \EE\AA\TT$ and~$\tau \in \EE\AA\VV$, then the process of integration by parts rewrites into
\begin{align}
\label{equation:IPP_B_series_edge}
\int_\MM F\Big(
\includegraphics[scale=0.5]{Special_trees/Deriv}
- \includegraphics[scale=0.5]{Special_trees/Scal_g} \ \includegraphics[scale=0.5]{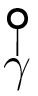} \Big)(\phi) d\mu_\infty
&=\int_\MM F\Big(
\includegraphics[scale=0.5]{Special_trees/Scal_g_deriv_g} \ \includegraphics[scale=0.5]{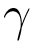}
-(N_g(\gamma)+N_g(\tau)+1) \ \includegraphics[scale=0.5]{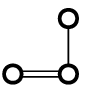} \ \includegraphics[scale=0.5]{Special_trees/Scal_g} \ \includegraphics[scale=0.5]{Special_trees/Id}
-\includegraphics[scale=0.5]{Special_trees/Div} \ \includegraphics[scale=0.5]{Special_trees/Id}\\&
+(N_g(\gamma)+N_g(\tau)) \ \includegraphics[scale=0.5]{Special_trees/Scal_g_deriv} \ \includegraphics[scale=0.5]{Special_trees/Id}
+\includegraphics[scale=0.5]{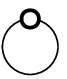} \ \includegraphics[scale=0.5]{Special_trees/Scal_g} \ \includegraphics[scale=0.5]{Special_trees/Id}
+2 \ \includegraphics[scale=0.5]{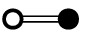} \ \includegraphics[scale=0.5]{Special_trees/Scal_g} \ \includegraphics[scale=0.5]{Special_trees/Id}
-2 \ \includegraphics[scale=0.5]{Special_trees/Scal_f} \ \includegraphics[scale=0.5]{Special_trees/Id} \Big)(\phi) d\mu_\infty, \nonumber \\
\label{equation:IPP_B_series_liana}
\int_\MM F\Big(
\includegraphics[scale=0.5]{Special_trees/Deriv_liana}
- \includegraphics[scale=0.5]{Special_trees/Deriv_g_Plug_g} \Big)(\phi) d\mu_\infty
&=\int_\MM F\Big(
-(N_g(\gamma)+1) \ \includegraphics[scale=0.5]{EATrees/aroma3301} \ \includegraphics[scale=0.5]{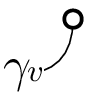}\\
&+N_g(\gamma) \ \includegraphics[scale=0.5]{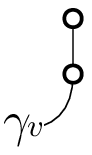}
+\includegraphics[scale=0.5]{EATrees/aroma1101} \ \includegraphics[scale=0.5]{Special_trees/Plug_g}
+2 \ \includegraphics[scale=0.5]{EATrees/aroma2101} \ \includegraphics[scale=0.5]{Special_trees/Plug_g}
-2 \ \includegraphics[scale=0.5]{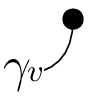} \Big)(\phi) d\mu_\infty. \nonumber
\end{align}
\end{lemma}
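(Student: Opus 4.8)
The plan is to derive both graphical identities as direct transcriptions of the single analytic formula~\eqref{equation:IPP_mu_infty}: for each identity I would read off, from the combinatorial data of~$\gamma$ and~$\tau$, the scalar function~$\psi$, the vector field~$H$ and the exponent~$k$ entering~\eqref{equation:IPP_mu_infty}, apply that formula, and then transcribe each of its seven right-hand terms back into an operation on forests. The mechanism that makes this a bookkeeping exercise rather than a fresh analytic computation is that~$F$ factors as the prefactor~$\sigma^{2(\abs{\gamma}-N_f(\gamma))}G^{-N_g(\gamma)/2}$ times a \emph{bare} differential assembled from derivatives of~$f$,~$g$ and~$\phi$; all the explicit~$G$- and~$\sigma$-powers of~\eqref{equation:IPP_mu_infty} are thus carried by the invariants~$\abs{\gamma}$,~$N_f(\gamma)$ and~$N_g(\gamma)$, while~$\rho_\infty$ together with the curvature of~$\MM$ are already packaged inside~\eqref{equation:IPP_mu_infty}, so nothing measure-dependent needs to be re-derived.

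For the edge identity~\eqref{equation:IPP_B_series_edge} I would take~$H=F(\tau)$, let~$\psi$ be the bare scalar differential of~$\gamma$ carrying a free index at the attachment node, and set~$k=(N_g(\gamma)+N_g(\tau))/2$; this~$k$ is allowed to be a half-integer since~\eqref{equation:IPP_mu_infty} remains valid for any real exponent, $G$ being smooth and positive on~$\MM$. The first step is to identify the graphical left-hand side with the analytic one: operation~1, which attaches the root of~$\tau$ to \emph{every} node of~$\gamma$, realises by the Leibniz rule the full directional derivative~$\psi'H=\sum_i\partial_i\psi\,H_i$, while the second graphical term reproduces~$G^{-1}(g,H)\psi'g$; since~$\psi'H-G^{-1}(g,H)\psi'g=(\nabla_\MM\psi,H)$, the two left-hand sides coincide up to the common prefactor. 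I would then apply~\eqref{equation:IPP_mu_infty} and match its right-hand side term by term. The term~$G^{-(k+1)}(g,H'g)\psi$ comes from differentiating~$\tau$ in the direction~$g$ (operation~1 with a fresh~$g$) and contracting the result against a further~$g$ (operation~2); the term~$-(2k+1)G^{-(k+2)}(g,g'g)(g,H)\psi$ becomes the disjoint aroma~$(g,g'g)$ together with a scalar-product contraction of~$\tau$, its coefficient being exactly~$N_g(\gamma)+N_g(\tau)+1$; the~$\Div(g)$,~$(g,g'H)$ and the two gradient terms are handled likewise. The last two, carrying~$\frac{2}{\sigma^2}$, arise by attaching a black~$f$-vertex through an~$E_S$ edge: this raises~$N_f$ by one while leaving~$\abs{\gamma}$ unchanged, so~$\sigma^{2(\abs{\gamma}-N_f)}$ picks up exactly~$\sigma^{-2}$ and the analytic factor~$\frac{2}{\sigma^2}$ becomes the graphical coefficient~$2$.

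The liana identity~\eqref{equation:IPP_B_series_liana} I would obtain as the specialisation of the same computation in which the contracted vector field is the constant coordinate field~$H=e_i$, summed over~$i$; this is exactly what the new liana of operation~4 encodes, the two slots it joins sharing a single summed index. Because this~$H$ is constant, one has~$H'=0$ and~$\Div(H)=0$, so the analytic terms~$G^{-(k+1)}(g,H'g)\psi$ and~$-G^{-k}\Div(H)\psi$ vanish; the surviving five terms of~\eqref{equation:IPP_mu_infty}, now with~$k=N_g(\gamma)/2$, reproduce the five forests on the right of~\eqref{equation:IPP_B_series_liana}, the coefficients~$N_g(\gamma)+1$ and~$N_g(\gamma)$ coming from~$2k+1$ and~$2k$ and each plug operation filling the slot left by the liana.

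The routine but delicate part, which I expect to be the main obstacle, is the invariant bookkeeping: one must verify that every elementary operation shifts~$(N_f,N_g,N_l,\abs{E_S})$, and hence the order~$\abs{\gamma}$ and the prefactors~$\sigma^{2(\abs{\gamma}-N_f)}$ and~$G^{-N_g/2}$, by precisely the amount the matching analytic term prescribes. The two observations that make the~$G$- and~$\sigma$-powers come out right are that each scalar-product contraction introduces a \emph{fresh} white vertex (supplying one extra~$G^{-1/2}$ per contracted~$g$, so that, for instance,~$(g,g'g)(g,H)$ sits at~$G^{-(k+2)}$ and not~$G^{-(k+3/2)}$), and that an~$f$-contraction through an~$E_S$ edge supplies the~$\sigma^{-2}$ noted above. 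One should also check that these manipulations are consistent with the identifications of forests coming from~$f$ and~$g$ being gradients, but, exactly as in the~$\R^d$ analysis of~\cite{Laurent20eab}, this compatibility does not affect the validity of the formulae.
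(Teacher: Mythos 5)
Your proposal is correct and follows exactly the route the paper intends: the lemma is presented as a direct graphical transcription of the analytic integration-by-parts formula~\eqref{equation:IPP_mu_infty}, with the edge rule obtained by taking $H=F(\tau)$ and $k=(N_g(\gamma)+N_g(\tau))/2$ and the liana rule by taking $H=e_i$ summed over $i$ with $k=N_g(\gamma)/2$, precisely as you do. Your bookkeeping of the prefactors (each scalar-product contraction adding a fresh white node worth $G^{-1/2}$, and the $E_S$-attached black node converting $\tfrac{2}{\sigma^2}$ into the graphical coefficient $2$) is the same accounting that underlies the paper's statement and its worked examples~\eqref{equation:ex_IPP_1}--\eqref{equation:ex_IPP_2}.
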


We write~$\gamma\sim \widetilde{\gamma}$ if it is possible to go from~$\gamma \in \EE\AA\TT$ to~$\widetilde{\gamma}\in \Span(\EE\AA\TT)$ with the processes of integration by parts~\eqref{equation:IPP_B_series_edge} or~\eqref{equation:IPP_B_series_liana}. We extend this relation by linearity on~$\Span(\EE\AA\TT)$ and make it symmetric so that~$\sim$ becomes an equivalence relation on~$\Span(\EE\AA\TT)$.
For example, the integrations by parts~\eqref{equation:ex_IPP_1} and~\eqref{equation:ex_IPP_2} can be rewritten with exotic aromatic B-series by using~\eqref{equation:IPP_B_series_liana} with~$\gamma=\eatree1011$ and~$\gamma=\eatree3201$. It yields
\begin{align*}
\frac{1}{8}\ \eatree1021
-\frac{1}{8}\ \eatree3211
&\sim
-\frac{1}{8}\ \aroma3301\ \eatree2111
+\frac{1}{8}\ \aroma1101\ \eatree2111
+\frac{1}{4}\ \aroma2101\ \eatree2111
-\frac{1}{4}\ \eatree2013,\\
\frac{1}{8}\ \eatree3211
-\frac{1}{8}\ \eatree5401
&\sim
-\frac{1}{4}\ \eatree3212
+\frac{1}{2}\ \eatree5402
-\frac{3}{8}\ \aroma3301\ \eatree4301
+\frac{1}{8}\ \aroma1101\ \eatree4301
+\frac{1}{4}\ \aroma2101\ \eatree4301
-\frac{1}{4}\ \eatree4201.
\end{align*}
For the sake of completeness, we present in Appendix~\ref{section:IPP_details} the integrations by parts for the order~$3$ terms of~$\AA_1\phi$. The computations are similar for the terms of order two in~$\phi$.

\begin{remark}
In the Euclidean case~$\R^d$, that is, for a forest~$\gamma \in\EE\AA\TT$ and a vector field~$\tau \in\EE\AA\VV$ with~$N_g(\gamma)=N_g(\tau)=0$ and~$g=0$, Lemma~\ref{lemma:IPP_B_series} reduces to the two following equations:
$$
\includegraphics[scale=0.5]{Special_trees/Deriv}
\sim
-\includegraphics[scale=0.5]{Special_trees/Div} \ \includegraphics[scale=0.5]{Special_trees/Id}
-2 \ \includegraphics[scale=0.5]{Special_trees/Scal_f} \ \includegraphics[scale=0.5]{Special_trees/Id},
\quad
\includegraphics[scale=0.5]{Special_trees/Deriv_liana}
\sim
-2 \ \includegraphics[scale=0.5]{Special_trees/Plug_f}.
$$
We recover the process of integration by parts described in~\cite[Thm.\ts 4.4]{Laurent20eab} in the context of exotic aromatic B-series in~$\R^d$.
\end{remark}

We can now revisit the statement of Theorem~\ref{theorem:operator_conditions_invariant_measure} in terms of B-series.
\begin{theorem}
\label{theorem:B_series_order_conditions}
Take a consistent ergodic Runge-Kutta method of the form~\eqref{equation:defRK}. We denote~$\AA_{i}=F(\gamma_i)$ with~$\gamma_i\in\EE\AA\TT$. If~$\gamma_i\sim \gamma_i^0$ and~$F(\gamma_i^0)=0$ for~$1\leq i< r$, then the method has at least order~$r$ for the invariant measure.
\end{theorem}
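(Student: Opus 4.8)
The plan is to reduce the statement to the operator criterion of Theorem~\ref{theorem:operator_conditions_invariant_measure}. Since the method is assumed consistent ($\AA_0=\LL$) and ergodic, and Assumptions~\ref{assumption:ellipticity},~\ref{assumption:inv_measure_Poisson} and~\ref{assumption:dvp_num} are in force, that theorem guarantees order~$r$ for the invariant measure as soon as $\AA_i^*\rho_\infty=0$ in $L^2(d\sigma_\MM)$ for $1\leq i<r$. The entire proof therefore amounts to translating the hypothesis ``$\gamma_i\sim\gamma_i^0$ with $F(\gamma_i^0)=0$'' into these adjoint identities.

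First I would record the functional-analytic reformulation of the target. Using $\AA_i=F(\gamma_i)$, the identity $d\mu_\infty=\rho_\infty\,d\sigma_\MM$, and the definition of the adjoint in $L^2(d\sigma_\MM)$ (the required integration by parts on $\MM$ being the Green-theorem identity preceding Lemma~\ref{lemma:IPP_B_series}, with no boundary contribution since $\MM$ is closed), one has for every $\phi\in\CC^\infty(\R^d,\R)$
\begin{equation*}
\int_\MM F(\gamma_i)(\phi)\,d\mu_\infty=\int_\MM (\AA_i\phi)\,\rho_\infty\,d\sigma_\MM=\int_\MM \phi\,(\AA_i^*\rho_\infty)\,d\sigma_\MM .
\end{equation*}
As $\AA_i^*\rho_\infty$ is a smooth function on the compact manifold $\MM$ and $d\sigma_\MM$ is a positive measure of full support, the fundamental lemma of the calculus of variations shows that $\AA_i^*\rho_\infty=0$ is \emph{equivalent} to the vanishing of $\int_\MM F(\gamma_i)(\phi)\,d\mu_\infty$ for all~$\phi$. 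It thus suffices to prove that this integral is zero.

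The bridge is the integral-preservation property of $\sim$. Each elementary step of Lemma~\ref{lemma:IPP_B_series}, namely~\eqref{equation:IPP_B_series_edge} and~\eqref{equation:IPP_B_series_liana}, is by construction an equality $\int_\MM F(\cdot)(\phi)\,d\mu_\infty=\int_\MM F(\cdot)(\phi)\,d\mu_\infty$; extending $\sim$ by linearity on $\Span(\EE\AA\TT)$ and closing it by transitivity, any finite chain of such steps still preserves $\int_\MM F(\cdot)(\phi)\,d\mu_\infty$. Hence the hypothesis $\gamma_i\sim\gamma_i^0$ yields
\begin{equation*}
\int_\MM F(\gamma_i)(\phi)\,d\mu_\infty=\int_\MM F(\gamma_i^0)(\phi)\,d\mu_\infty\qquad\text{for all }\phi ,
\end{equation*}
and the hypothesis $F(\gamma_i^0)=0$ makes the right-hand side vanish identically. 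Combining with the reformulation above gives $\AA_i^*\rho_\infty=0$ for $1\le i<r$, and Theorem~\ref{theorem:operator_conditions_invariant_measure} concludes.

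The genuinely delicate point is not in this assembly but in the claim that the graph operations of Lemma~\ref{lemma:IPP_B_series} faithfully encode the analytic integration by parts~\eqref{equation:IPP_mu_infty}: the combinatorial bookkeeping of which node receives the new edge or liana and the $N_g$-dependent coefficients is exactly the content of Lemma~\ref{lemma:IPP_B_series}, which I would invoke rather than reprove. A minor formal point to verify is that $\AA_i=F(\gamma_i)$ must be read with $\gamma_i\in\Span(\EE\AA\TT)$ (a finite linear combination of forests of order $i+1$, as provided by the B-series representation of the $\AA_j$), so that every manipulation stays within the linear span on which $\sim$ is defined; since each step is linear in $\gamma_i$, this changes nothing.
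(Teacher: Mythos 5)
Your proposal is correct and follows essentially the same route the paper takes (the paper treats this theorem as a direct consequence of Theorem~\ref{theorem:operator_conditions_invariant_measure} together with the integral-preserving relation~$\sim$ from Lemma~\ref{lemma:IPP_B_series}): the chain $\gamma_i\sim\gamma_i^0$ preserves $\int_\MM F(\cdot)(\phi)\,d\mu_\infty$, the hypothesis $F(\gamma_i^0)=0$ then forces $\int_\MM \AA_i\phi\,d\mu_\infty=0$ for all $\phi$, hence $\AA_i^*\rho_\infty=0$, and the operator criterion concludes. Your explicit justification of the equivalence via the density argument and the remark that $\gamma_i$ should be read in $\Span(\EE\AA\TT)$ are sensible fillings-in of details the paper leaves implicit.
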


By applying repeatedly the process of integrations by parts described in Lemma~\ref{lemma:IPP_B_series}, one can simplify the operator~$\AA_{i}=F(\gamma_i)$ into an operator of the form~$\AA_{i}^0=F(\gamma_i^0)$ such that~$\gamma_i\sim \gamma_i^0$.
The complete decomposition of~$\AA_1^0$ into exotic aromatic forests is detailed in Table 3 (see Appendix~\ref{section:Tables_B_series}).
According to Theorem~\ref{theorem:B_series_order_conditions}, choosing the coefficients of the Runge-Kutta method such that~$\gamma_1^0=0$ yields the order two conditions for the invariant measure, as stated in Theorem~\ref{theorem:RK_conditions}.

\begin{remark}
We call~$\EE\AA\TT^0$ the subset of exotic aromatic forests whose root has only one predecessor (that is, the forests associated to an order one operator) or that have a rooted tree of the form~$\eatree3201$,~$\eatree4301$,~$\eatree5401$, \dots
Then, if~$\gamma \in\EE\AA\TT$, there exists~$\gamma^0 \in \EE\AA\TT^0$ such that~$\gamma\sim \gamma^0$.
For instance, for a consistent method of the form~\eqref{equation:defRK}, the operator~$\AA_1^0=F(\gamma_1^0)$ has the form
$$\gamma_1^0=(b^T d-\widehat{b}^T d)\ \aroma3202\ \eatree3201 + \sum_{\underset{\abs{\pi(r)}=1}{\abs{\gamma}=2}} a^0(\gamma)\gamma,$$
so that~$\gamma_1^0 \in \EE\AA\TT^0$, and~$\AA_1^0$ is a differential operator of order one if the condition~$b^T d=\widehat{b}^T d$ holds.
\end{remark}

\section{Numerical experiments}
\label{section:numerical_experiments}

In this section, we perform numerical experiments to confirm the theoretical findings, first on a sphere and a torus in~$\R^3$, and then on the special linear group.

\subsection{Invariant measure approximation on a sphere and a torus}

To check the numerical order two of the Runge-Kutta integrator~\eqref{equation:RK_order_2} presented in Section~\ref{section:example_methods}, we first compare it with the Euler scheme~\eqref{equation:IE} on the unit sphere in~$\R^3$, where the constraint is given by~$\zeta(x)=(x_1^2+x_2^2+x_3^2-1)/2$. We choose the potential~$V(x)=25(1-x_1^2-x_2^2)$, with~$\sigma=\sqrt{2}$,~$\phi(x)=x_3^2$,~$f=-\nabla V$,~$g=\nabla \zeta$,~$M=10^7$ independent trajectories to have a small Monte-Carlo error and a final time~$T=20$.
Observe that for the smaller final time~$T=10$ (not included in the figures for conciseness), the convergence curves reveal nearly identical to the case~$T=20$ considered in Figure~\ref{figure:Plot_sphere}, which suggests that the numerical solutions are already very close to equilibrium at these final times.
Following Remark~\ref{remark:discrete_rv} and Lemma~\ref{lemma:existence_fixed_point}, we use discrete bounded random variables satisfying~\eqref{equation:discrete_rv} in the implementation of the integrators.
For both integrators, we compute the Monte-Carlo estimator~$\widebar{J}=\frac{1}{M}\sum_{m=1}^M \phi(X_N^{(m)}) \simeq \E[\phi(X_N)]$, where~$X_n^{(m)}$ is the~$m$-th realisation of the integrator at time~$t_n=nh$, and~$N$ is an integer satisfying~$Nh=T$.
We compare this approximation with a reference value of~$\int_\MM \phi d\mu_\infty$ computed via a standard quadrature formula, and we plot the error for the invariant measure~\eqref{equation:def_error_inv_measure} versus different timestep~$h$.
We also plot an estimate of the Monte-Carlo error by using the standard error of the mean estimator
$\big(\sum_{m=1}^M (\phi(X_N^{(m)})-\widebar{J})^2\big)^{1/2}/\sqrt{M(M-1)}$.
We observe in all convergence plots that the Monte-Carlo error prevails for small values of the timestep~$h$.
On Figure~\ref{figure:Plot_sphere}, we observe as expected order one for the Euler scheme~\eqref{equation:IE} and order two for the Runge-Kutta scheme~\eqref{equation:RK_order_2}.
\begin{figure}[ht]
	\begin{minipage}[c]{.49\linewidth}
		\includegraphics[scale=0.5]{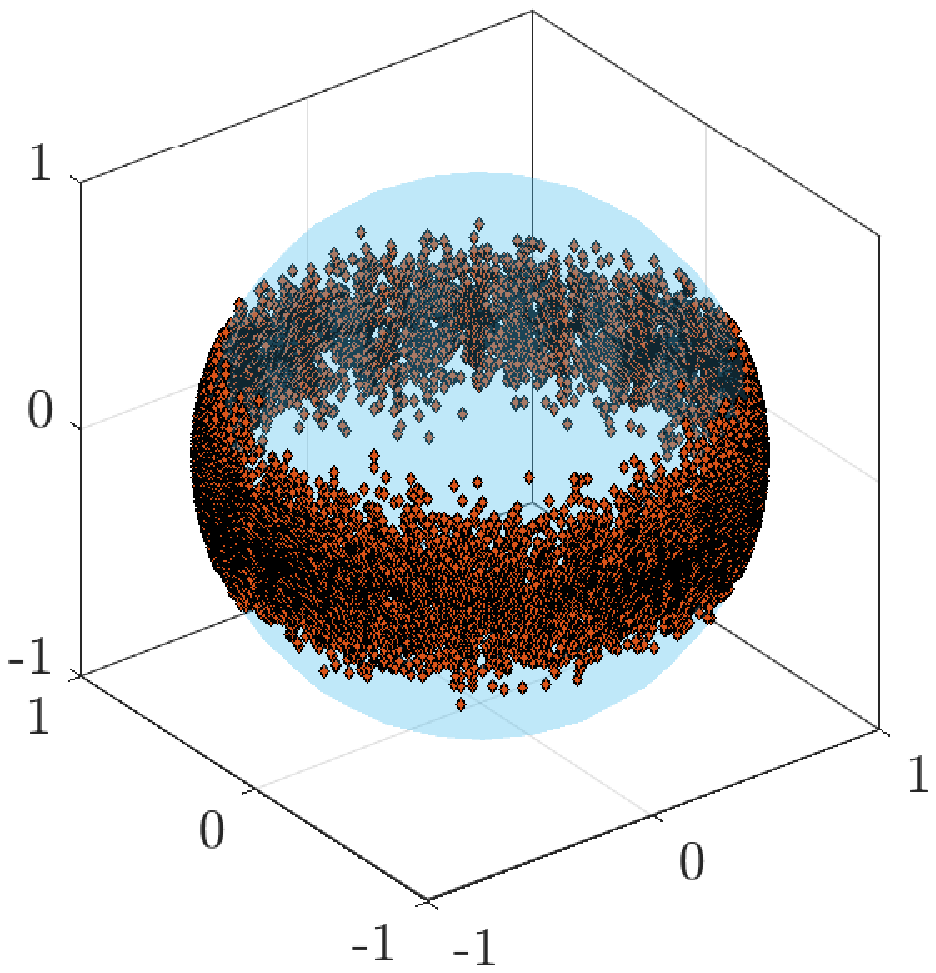}
	\end{minipage} \hfill
	\begin{minipage}[c]{.49\linewidth}
		\includegraphics[scale=0.5]{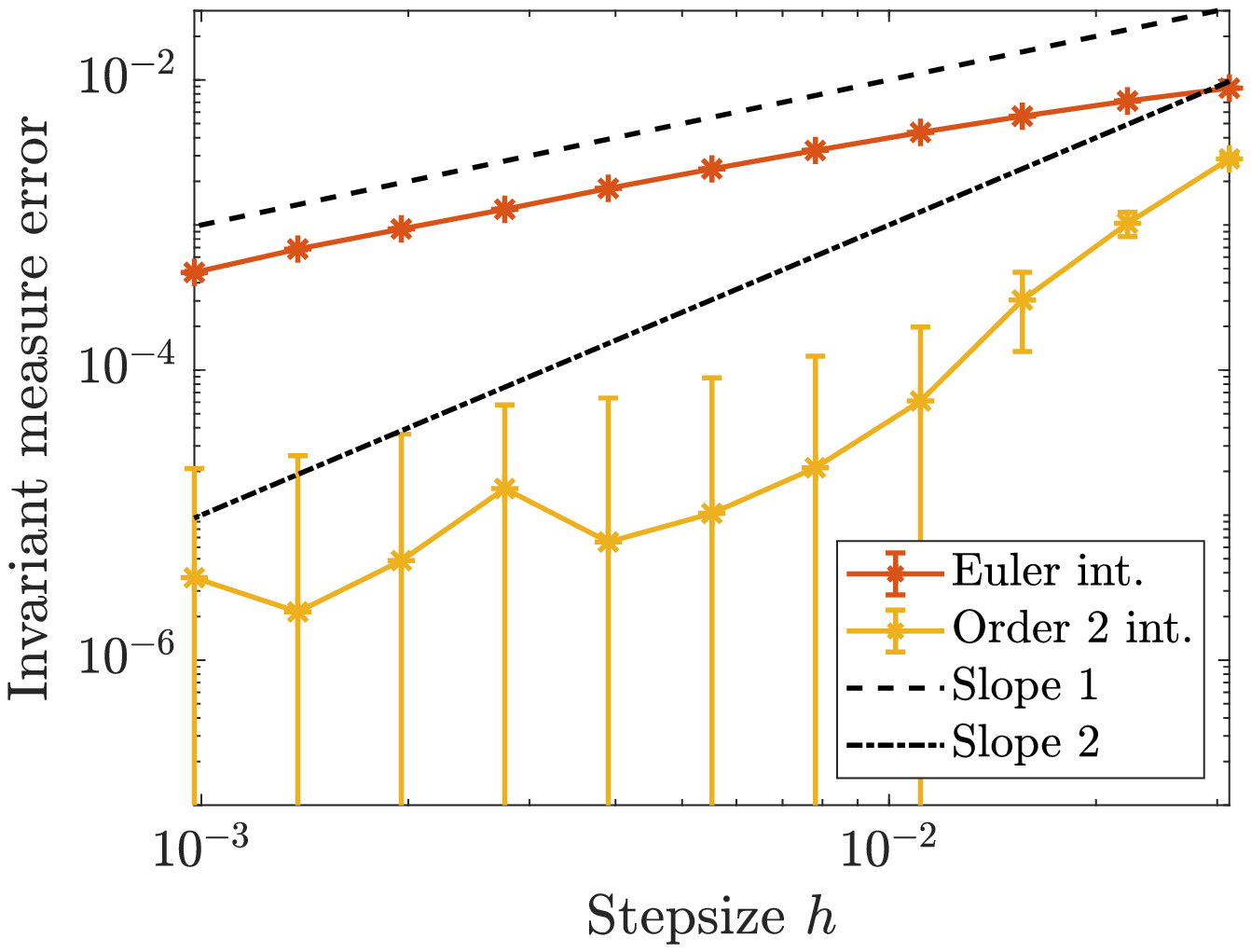}
	\end{minipage}
	\caption{A trajectory of the order two method (left) and the convergence curve for the sphere for the invariant measure (right) with the potential~$V(x)=25(1-x_1^2-x_2^2)$,~$\phi(x)=x_3^2$, a final time~$T=20$ and~$M=10^7$ trajectories.}
	\label{figure:Plot_sphere}
\end{figure}

We then apply the Euler scheme~\eqref{equation:IE} and the Runge-Kutta integrator~\eqref{equation:RK_order_2} on a torus defined by the constraint~$\zeta(x)=(x_1^2+x_2^2+x_3^2+R^2-r^2)^2-4R^2(x_1^2+x_2^2)$ with~$R=3$ and~$r=1$. The potential is~$V(x)=25(x_3-r)^2$ and we choose~$\sigma=\sqrt{2}$,~$\phi(x)=x_3^2$,~$f=-\nabla V$,~$g=\nabla \zeta$, a final time~$T=20$ and~$M=10^7$ independent trajectories.
On Figure~\ref{figure:Plot_torus}, we plot the error for the invariant measure versus the timestep~$h$, by using a reference value for~$\int_\MM \phi d\mu_\infty$ obtained with a standard quadrature formula. As expected, we observe order two for the proposed integrator.
These curves confirm the theoretical findings presented in Section~\ref{section:Runge_Kutta_section}.
In particular, the scheme~\eqref{equation:RK_order_2} has order two of accuracy for the invariant measure on manifolds, according to Theorem~\ref{theorem:RK_conditions}.
Note that if we had chosen a very short final time~$T$, we would have observed the weak order one instead of the order two for the invariant measure as we would not have reached equilibrium.
\begin{figure}[ht]
	\begin{minipage}[c]{.49\linewidth}
		\includegraphics[scale=0.5]{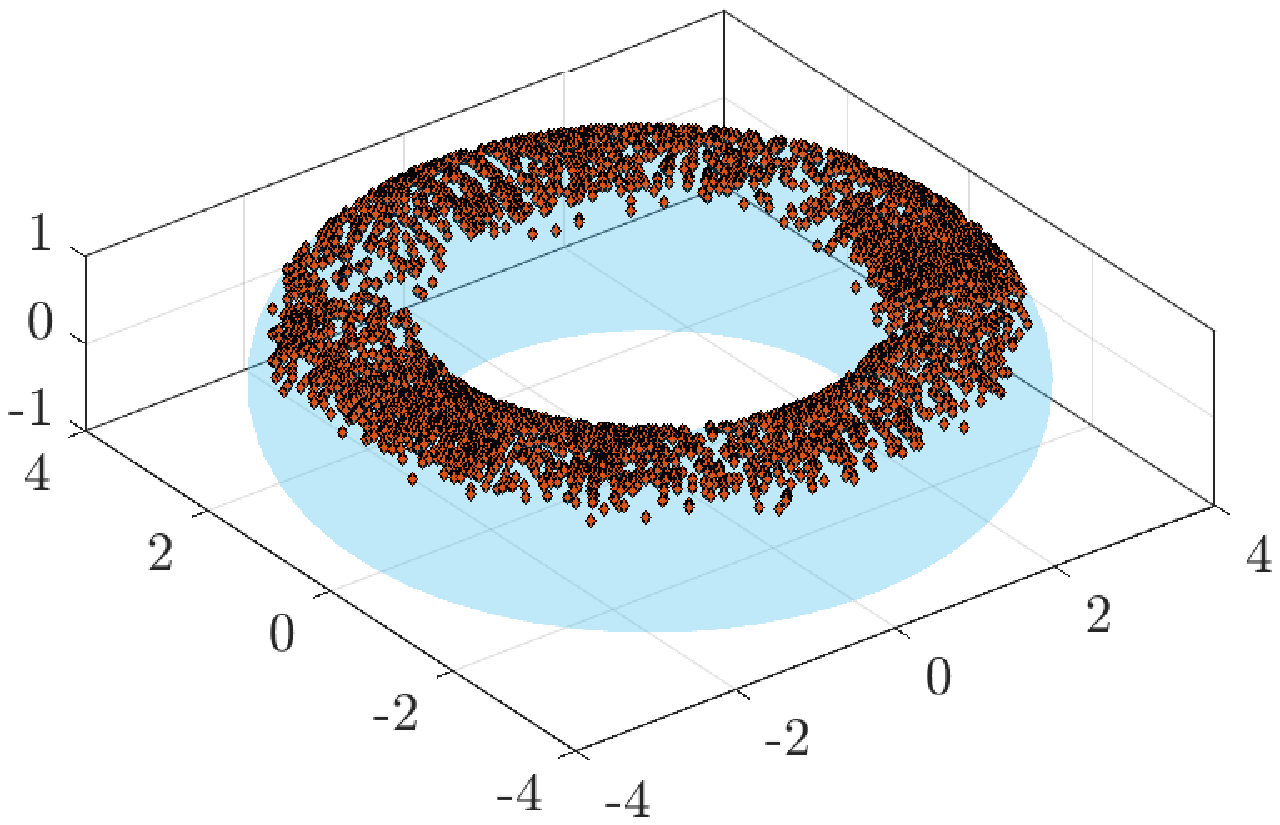}
	\end{minipage} \hfill
	\begin{minipage}[c]{.49\linewidth}
		\includegraphics[scale=0.5]{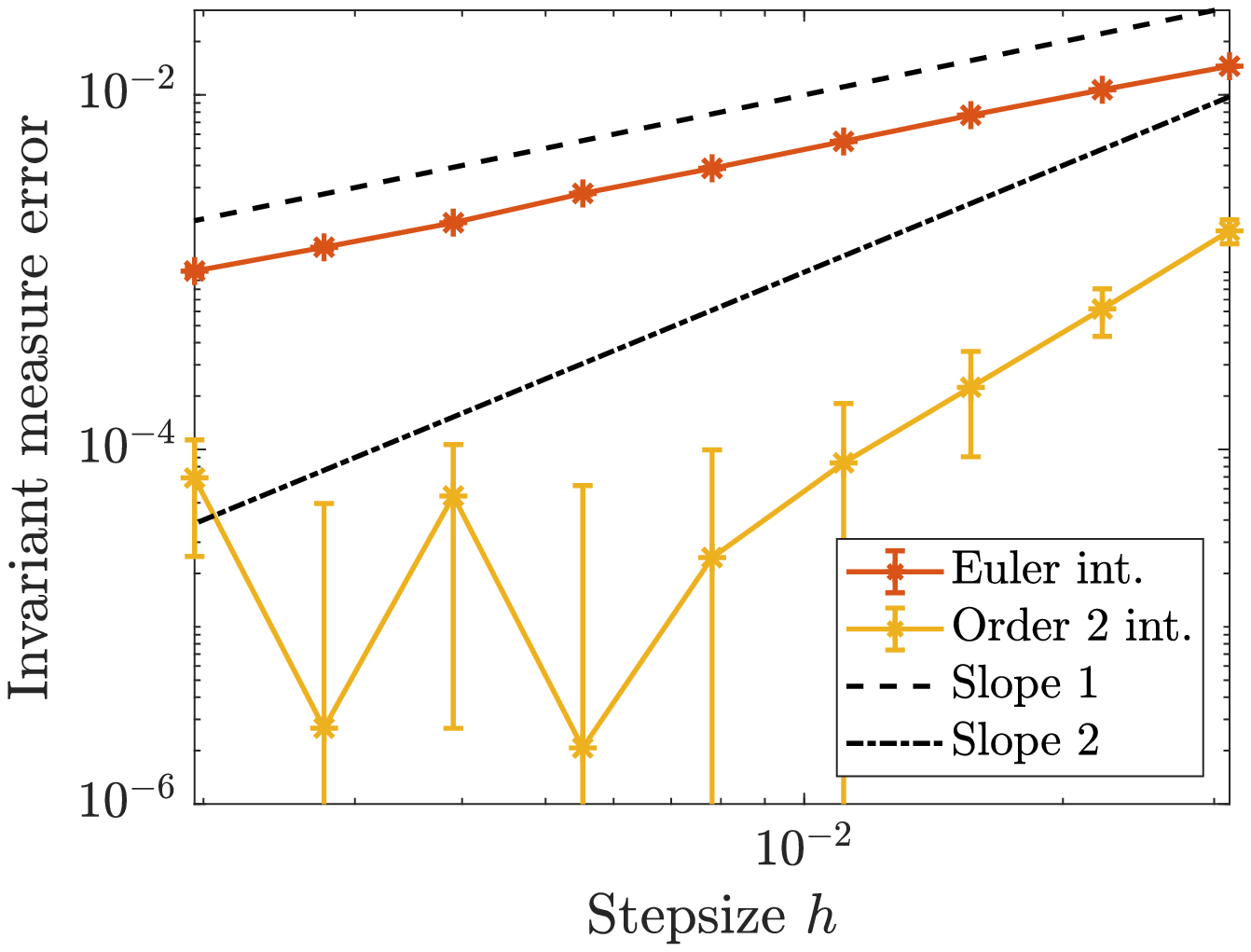}
	\end{minipage}
	\caption{A trajectory of the order two method (left) and the convergence curve for the torus for the invariant measure (right) with the potential~$V(x)=25(x_3-r)^2$,~$\phi(x)=x_3^2$, a final time~$T=20$ and~$M=10^7$ trajectories.}
	\label{figure:Plot_torus}
\end{figure}

\subsection{Invariant measure approximation on the special linear group}

Sampling on a manifold~$\MM$ is especially useful to compute integrals of the form~$\int_\MM \phi(x) d\mu_\infty$ when~$\MM$ is a manifold of high dimension. The class of methods~\eqref{equation:defRK} is convenient as the number of order conditions does not increase with the dimension of the space increasing. We apply Method~\eqref{equation:RK_order_2} on a Lie group (in the spirit of~\cite{Zappa18mco,Zhang19eso}) to see how it performs in high dimension. We choose the special linear group~$\SL(m)=\{M\in\R^{m\times m},\det(M)=1\}$, seen as a submanifold of~$\R^{m^2}$ of codimension~$1$.
As explained in Remark~\ref{remark:weakened_assumptions}, our analysis still applies to~$\SL(m)$ if we choose a potential~$V$ with appropriate growth assumptions, even if it is not a compact manifold.
We compare the Euler scheme~\eqref{equation:IE} and the Runge-Kutta integrator~\eqref{equation:RK_order_2} on~$\MM=\SL(m)$ for different~$m$ (that is, with the constraint~$\zeta(x)=\det(x)-1$), where we use in the implementation discrete random variables satisfying~\eqref{equation:discrete_rv}.
We choose the potential 
\begin{equation}
\label{equation:potential_SL}
V(x)=25\Trace((x-I_{m^2})^T(x-I_{m^2}))
\end{equation}
and the parameters~$\sigma=\sqrt{2}$,~$\phi(x)=\Trace(x)$ and~$M=10^6$ trajectories. Each trajectory is an approximation of the solution of equation~\eqref{equation:projected_Langevin} at time~$T=10$ with a timestep~$h=T/N$ and~$N=2^{12}$ steps.
With this timestep~$h$, the Newton method used in the Euler scheme~\eqref{equation:IE} does not converge for approximately~$0.005\%$ of the trajectories for~$m=4$. We choose to discard these trajectories, which induces a negligible bias in the expectation.
This does not occur for the Runge-Kutta integrator~\eqref{equation:RK_order_2}.
We recall that for a small enough timestep~$h$, the Newton method would always converge (see also Remark~\ref{remark:Newton_method}).
The reference solution for~$J(m)=\int_{\SL(m)} \phi(x) d\mu_\infty(x)$ is computed with the Runge-Kutta method~\eqref{equation:RK_order_2} with~$h_{\text{ref}}=2^{-14}T$.
With the factor~$25$ in the potential~\eqref{equation:potential_SL}, the solution of~\eqref{equation:projected_Langevin} stays close to~$I_{m^2}$, and~$J(m)$ is close to~$\phi(I_{m^2})=m$.
This choice of factor permits to explore a reasonably small area of~$\SL(m)$ with moderate manifold curvature.
We observe numerically that replacing the factor 25 by 1 in~\eqref{equation:potential_SL} induces a severe timestep restriction (results not included for conciseness).
The computation of~$J(m)$ could also be done via the parametrization given by the Iwasawa decomposition for~$\SL(m)$ (see, for instance,~\cite[Chap.\ts 1]{Goldfeld06afa}) and the use of standard quadrature methods, but these methods have prohibitive costs in high dimension.
We put together the numerical results in Table~\ref{Table:numerical_results_SL} and observe that the Runge-Kutta method~\eqref{equation:RK_order_2} performs significantly better than the Euler scheme~\eqref{equation:IE}.

\begin{table}[H]
	\setcellgapes{3pt}
	\centering
	\begin{tabular}{|c||c||c||c|c||c|c|}
	\hline
	$m$ &~$\dim(\SL(m))$ & \multicolumn{1}{c||}{$J(m)$} & \multicolumn{1}{c|}{$\widebar{J}_\text{Euler}$} & \multicolumn{1}{c||}{error for~$\widebar{J}_\text{Euler}$} & \multicolumn{1}{c|}{$\widebar{J}_2$} & \multicolumn{1}{c|}{error for~$\widebar{J}_2$} \\
	\hhline{|=||=||=||=|=||=|=|}
	$2$ & 3 &~$2.00967$ &~$2.01031$ &~$6.4 \cdot 10^{-4}$ &~$2.00962$ &~$4.4 \cdot 10^{-5}$ \\
	\hline
	$3$ & 8 &~$3.01954$ &~$3.02068$ &~$1.1 \cdot 10^{-3}$ &~$3.01934$ &~$2.0\cdot 10^{-4}$ \\
	\hline
	$4$ & 15 &~$4.02930$ &~$4.03095$ &~$1.6 \cdot 10^{-3}$ &~$4.02907$ &~$2.3 \cdot 10^{-4}$  \\
	\hline
	\end{tabular}
	\caption{Numerical approximation of the integral~$J(m)=\int_{\SL(m)} \phi(x) d\mu_\infty$ for~$2\leq m \leq 4$ with the estimator~$\widebar{J}=M^{-1}\sum_{k=1}^M \phi(X_N^{(k)})$ where~$(X_n)$ is given by the Euler scheme~\eqref{equation:IE} for~$\widebar{J}_\text{Euler}$ and by the Runge-Kutta integrator~\eqref{equation:RK_order_2} for~$\widebar{J}_2$, with their respective errors. The average is taken over~$M=10^6$ trajectories, with the potential~\eqref{equation:potential_SL},~$\phi(x)=\Trace(x)$, a final time~$T=10$ and a timestep~$h=2^{-12}T$.}
	\label{Table:numerical_results_SL}
	\setcellgapes{1pt}
\end{table}

%--------------------------------------------------------------
\bigskip

\noindent \textbf{Acknowledgements.}\
This work was partially supported by the Swiss National Science Foundation, grants No. 200020\_184614, No. 200021\_162404 and No. 200020\_178752.
The computations were performed at the University of Geneva on the Baobab cluster using the Julia programming language.

\bibliographystyle{abbrv}
%\nocite{*}
\small
\bibliography{Ma_Bibliographie}
\normalsize

\vskip-1ex
\begin{appendices}

\section{Proof of Theorem~\ref{theorem:operator_conditions_invariant_measure}}
\label{section:proof_order_invariant_measure}

In the spirit of backward error analysis for differential equations (see~\cite{Hairer06gni,Zygalakis11ote,Abdulle12hwo,Debussche12wbe}), we build a modified generator~$\LL^h$ such that~$U(x,h)=\E[\phi(X_1)|X_0=x]$ formally satisfies
\begin{equation}
\label{equation:backward_analysis}
U(x,h)=\sum_{j\geq 0} \frac{h^j}{j!} (\LL^h)^j\phi(x).
\end{equation}
Truncating this formal series yields an estimate of the form
$$
U(x,h)=\phi(x)+\sum_{j=1}^N \frac{h^j}{j!} (\LL^h)^j\phi(x)+ h^{N+1}R_N^h(\phi,x),\quad x\in N_\MM,
$$
where~$N_\MM$ is an open neighbourhood of~$\MM$ in~$\R^d$ and the remainder satisfies~$\abs{R_N^h(\phi,x)}\leq C_N(\phi)$.
For this, we write formally~$\LL^h=\LL+\sum_{n\geq 1} h^n L_n$ and compare the series expression in~\eqref{equation:dvp_num} and~\eqref{equation:backward_analysis}. By formally identifying the powers of~$h$, we deduce the following rigorous definition of the~$L_n$ on an open neighbourhood of~$\MM$ in~$\R^d$, 
\begin{equation}
\label{equation:def_L_n}
L_0=\LL,\quad L_n=\AA_n+\sum_{l=1}^n \frac{B_l}{l!}\sum_{n_1+\dots+n_{l+1}=n-l}L_{n_1}\cdots L_{n_l}\AA_{n_{l+1}},\quad n\geq 1,
\end{equation}
where the~$B_l$ are the Bernoulli numbers (see~\cite{Debussche12wbe,Zygalakis11ote,Abdulle14hon} for similar expansions in~$\T^d$ or~$\R^d$).
Using Assumption~\ref{assumption:inv_measure_Poisson}, we build recursively a sequence of functions~$(\rho_n)$ such that
\begin{equation}
\label{equation:def_mu_n}
\LL^*\rho_n=-\sum_{l=1}^n L_l^*\rho_{n-l} \quad \text{and} \quad \rho_0=\rho_{\infty},
\end{equation}
where~$\int_\MM \rho_n d\sigma_\MM=0$ for~$n\geq 1$.
We denote~$\rho_{r}^h=\sum_{n=0}^r h^n \rho_n$ and~$d\mu_r^h=\rho_{r}^h d\sigma_\MM$ and adapt on the manifold~$\MM$ the following result from~\cite[Thm.\ts 2.1]{Debussche12wbe} in the context of~$\R^d$.
\begin{lemma}
\label{lemma:Debussche_Faou_theorem}
Under Assumptions~\ref{assumption:ellipticity},~\ref{assumption:inv_measure_Poisson} and~\ref{assumption:dvp_num}, for all~$\phi\in \CC^\infty(\R^d,\R)$, for every positive integer~$r$, there exists a constant~$C_r(\phi)$ independent of~$h$ such that, for all~$h$ small enough,
\begin{equation}
\label{equation:distance_mu_h_rho_m_M}
\abs{\int_\MM \phi d\mu^h - \int_\MM \phi d\mu_r^h}\leq C_r(\phi) h^{r+1}.
\end{equation}
\end{lemma}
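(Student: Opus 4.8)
The plan is to combine the backward-error construction \eqref{equation:def_L_n}--\eqref{equation:def_mu_n} with the exact invariance of $\mu^h$ and a duality argument against the discrete resolvent of the scheme. Throughout write $P_h\phi(x)=\E[\phi(X_1)\mid X_0=x]$ for the one-step transition operator, so that Assumption~\ref{assumption:dvp_num} reads $P_h\phi=\phi+\sum_{j\geq1}h^j\AA_{j-1}\phi+\OO(h^{N+1})$. The first and main constructive step is to prove that the truncated density $\rho_r^h$ is \emph{almost invariant} for the scheme, in the sense that for every $\psi\in\CC^\infty(N_\MM,\R)$,
\[
\int_\MM P_h\psi\,d\mu_r^h=\int_\MM\psi\,d\mu_r^h+h^{r+2}\int_\MM\psi\,\theta_r^h\,d\sigma_\MM,
\]
where $\theta_r^h\in\CC^\infty(N_\MM,\R)$ is bounded uniformly for $h$ small. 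This is pure backward error analysis, equivalent to $P_h^*\rho_r^h=\rho_r^h+h^{r+2}\theta_r^h$ in $L^2(d\sigma_\MM)$: by \eqref{equation:backward_analysis}--\eqref{equation:def_L_n} the modified generator $\LL^h=\LL+\sum_{n\geq1}h^nL_n$ satisfies $P_h=\exp(h\LL^h)$ as a formal series, while the $\rho_n$ of \eqref{equation:def_mu_n} are chosen precisely so that the coefficients of $h^0,\dots,h^r$ in the formal series $(\LL^h)^*\rho_r^h$ all vanish. Hence $(\LL^h)^*\rho_r^h=\OO(h^{r+1})$, and applying the exponential costs one further power of $h$, giving the defect $\OO(h^{r+2})$. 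To make this rigorous I would truncate $\LL^h$ at an order $N=N(r)$ large enough, transfer the control of the discarded tail to the remainder bound $\abs{R_N^h(\phi,x)}\leq C_N(\phi)$ of Assumption~\ref{assumption:dvp_num}, and use the smoothness of $f,g$ together with the compactness of $\MM$ to get a bound on $\theta_r^h$ uniform in both $x$ and $h$.

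Next, given $\phi$, let $\psi_h$ solve the discrete Poisson equation $(I-P_h)\psi_h=\phi-\int_\MM\phi\,d\mu^h$, represented by the Neumann series $\psi_h=\sum_{n\geq0}P_h^n\big(\phi-\int_\MM\phi\,d\mu^h\big)$; the centring by $\int_\MM\phi\,d\mu^h$ makes the right-hand side of zero average for $\mu^h$, which by the exact invariance \eqref{equation:ergodicity_num} is exactly what makes the series converge and $\psi_h$ well defined. Inserting this $\psi_h$ into the near-invariance identity of the first step, and using $(I-P_h)\psi_h=\phi-\int_\MM\phi\,d\mu^h$ together with the fact that $\mu_r^h$ has total mass one (so that the additive constant integrates out), one obtains the clean identity
\[
\int_\MM\phi\,d\mu^h-\int_\MM\phi\,d\mu_r^h=h^{r+2}\int_\MM\psi_h\,\theta_r^h\,d\sigma_\MM.
\]
Together with the uniform bound on $\theta_r^h$, the estimate \eqref{equation:distance_mu_h_rho_m_M} follows as soon as $\sup_\MM\abs{\psi_h}=\OO(h^{-1})$.

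The genuinely hard part, and the only place where the geometry of $\MM$ enters in an essential, nonroutine way, is precisely this last bound $\sup_\MM\abs{\psi_h}=\OO(h^{-1})$. Because $\phi-\int_\MM\phi\,d\mu^h$ has zero average against $\mu^h$, one has $P_h^n\big(\phi-\int_\MM\phi\,d\mu^h\big)\to0$, and a geometric rate of the form $\sup_\MM\abs{P_h^n g}\leq C(1-\lambda h)^n\sup_\MM\abs{g}$ for $\mu^h$-centred $g$ would give $\sup_\MM\abs{\psi_h}\leq C(\lambda h)^{-1}\sup_\MM\abs{g}=\OO(h^{-1})$, as needed. Establishing this uniform-in-$h$ geometric ergodicity of the numerical scheme (a spectral gap of size $\sim h$, with constants independent of the initial condition) on a compact manifold is the crux of the matter; I would obtain it by adapting the $\R^d$ and $\T^d$ estimates of \cite{Debussche12wbe} and the manifold ergodicity analysis of \cite{Faou09csd}, relying on the ellipticity-type Assumption~\ref{assumption:ellipticity} and on the compactness of $\MM$. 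The resulting statement is the manifold analogue of \cite[Thm.\ts 2.1]{Debussche12wbe}.
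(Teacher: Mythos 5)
Your proposal is correct and follows essentially the same route as the proof the paper relies on: the paper omits the argument, stating it is identical to \cite[Thm.\ts 2.1]{Debussche12wbe} with $dx$ replaced by $d\sigma_\MM$, and your two steps --- the almost-invariance $P_h^*\rho_r^h=\rho_r^h+\OO(h^{r+2})$ obtained from $(\LL^h)^*\rho_r^h=\OO(h^{r+1})$ via \eqref{equation:def_L_n}--\eqref{equation:def_mu_n}, followed by duality against the numerical dynamics with the uniform-in-$h$ geometric ergodicity supplying the $\OO(h^{-1})$ factor --- are exactly that argument, your Neumann-series/discrete-Poisson formulation being a repackaging of the telescoping sum $\sum_k\int_\MM P_h^k\phi\,(P_h^*-I)\rho_r^h\,d\sigma_\MM$ used there. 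You also correctly isolate the one ingredient that must genuinely be imported and adapted from \cite{Debussche12wbe,Faou09csd}, namely the quantitative exponential mixing of the scheme with constants independent of $h$, which is strictly stronger than the qualitative ergodicity assumed in \eqref{equation:ergodicity_num}.
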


We omit the proof of Lemma~\ref{lemma:Debussche_Faou_theorem} as it is exactly the same as in~\cite[Thm.\ts 2.1]{Debussche12wbe} by replacing~$dx$ by~$d\sigma_\MM$ and~$\T^d$ by~$\MM$. We are now able to prove Theorem~\ref{theorem:operator_conditions_invariant_measure}.
%It is similar to the proof of~\cite[Thm.\ts 3.2]{Abdulle14hon} in the context of manifolds.

\begin{proof}[Proof of Theorem~\ref{theorem:operator_conditions_invariant_measure}]
As~$\AA_j^*\rho_{\infty}=0$ for~$j=1,\dots,r-1$, we deduce recursively from~\eqref{equation:def_L_n} and~\eqref{equation:def_mu_n} that~$\rho_j=0$ for~$j=1,\dots,r-1$, which yields~$\rho_r^h=\rho_\infty+h^r\rho_r$. Using the definition of the error for the invariant measure~\eqref{equation:def_error_inv_measure} and the ergodicity of the integrator~\eqref{equation:ergodicity_num}, equation~\eqref{equation:distance_mu_h_rho_m_M} becomes
$$
\Bigg|e(\phi,h)-h^r\int_\MM  \phi(x) \rho_r(x) d\sigma_\MM(x)\Bigg|\leq  C h^{r+1}.
$$
We are left to prove that~$$\int_\MM  \phi(x) \rho_r(x) d\sigma_\MM(x)=\int_0^\infty \int_\MM u(x,t)\AA_r^* \rho_\infty(x) d\sigma_\MM(x) dt.$$
By the backward Kolmogorov equation and ergodicity,~$u$ satisfies
$$\lim_{T\to\infty} u(x,T)=\phi(x)+\int_0^\infty \LL u(x,t) dt=\int_\MM \phi(y) d\mu_\infty(y).$$
Using~$\LL^* \rho_r= -L_r^* \rho_\infty=-\AA_r^*\rho_\infty$, we deduce
\begin{align*}
\int_\MM  \phi(x) \rho_r(x) d\sigma_\MM(x)&=-\int_0^\infty \int_\MM  \LL u(x,t) \rho_r(x) d\sigma_\MM(x) dt
+\int_\MM \phi(y) d\mu_\infty(y) \int_\MM  \rho_r(x) d\sigma_\MM(x)\\
&=-\int_0^\infty \int_\MM  u(x,t) \LL^* \rho_r(x) d\sigma_\MM(x) dt\\
&=\int_0^\infty \int_\MM  u(x,t) \AA_r^* \rho_\infty(x) d\sigma_\MM(x) dt,
\end{align*}
where we used that~$\int_\MM  \rho_r(x) d\sigma_\MM(x)=0$. This concludes the proof of Theorem~\ref{theorem:operator_conditions_invariant_measure}.
\end{proof}

\section{Integration by parts using the tree formalism}
\label{section:IPP_details}

We provide here the detailed calculations of the integrations by parts of the order three terms, that are needed for the proof of Theorem~\ref{theorem:RK_conditions}.
After applying the operations~\eqref{equation:ex_IPP_1} and~\eqref{equation:ex_IPP_2},~$\int_\MM \AA_1\phi d\mu_\infty$ is transformed into~$\int_\MM \BB\phi d\mu_\infty$ where~$\BB$ is a differential operator of order three given by
\begin{align*}
\BB\phi&=F\Big(
	\frac{1}{4}\ \eatree2013
	-\frac{1}{4}\ \eatree4201
	+\frac{1}{2}\ \eatree5402
	-\frac{1}{4}\ \eatree3212
	+\frac{1}{4}\ \aroma2101\ \eatree4301\\
	&+\frac{1}{8}\ \aroma1101\ \eatree4301
	-\frac{3}{8}\ \aroma3301\ \eatree4301
	-\frac{1}{4}\ \aroma2101\ \eatree2111
	-\frac{1}{8}\ \aroma1101\ \eatree2111
	+\frac{1}{8}\ \aroma3301\ \eatree2111
	\Big)(\phi)+\RR\phi,
\end{align*}
and~$\RR$ is a differential operator of order two.
Using Lemma~\ref{lemma:IPP_B_series} multiple times, we get the following integrations by parts of the order three terms of~$\BB\phi$.
\begin{align*}
\frac{1}{4}\ \eatree2013
&-\frac{1}{4}\ \eatree4201
\sim
-\frac{1}{4}\ \eatree2012
+\frac{1}{4}\ \eatree4203
-\frac{1}{4}\ \aroma3301\ \eatree3101
+\frac{1}{4}\ \aroma1101\ \eatree3101
+\frac{1}{2}\ \aroma2101\ \eatree3101
-\frac{1}{2}\ \eatree3001
\\
%%%
-\frac{1}{4}\ \eatree3212
&+\frac{1}{4}\ \eatree5402
\sim
\frac{1}{4}\ \eatree3217
+\frac{1}{4}\ \eatree3218
-\frac{1}{4}\ \eatree5405
-\frac{1}{4}\ \eatree5404\\&
+\frac{3}{4}\ \aroma3301\ \eatree4302
-\frac{1}{2}\ \eatree5403
-\frac{1}{4}\ \aroma1101\ \eatree4302
-\frac{1}{2}\ \aroma2101\ \eatree4302
+\frac{1}{2}\ \eatree4202
\\
%%%
-\frac{1}{4}\ \aroma2101\ \eatree2111
&+\frac{1}{4}\ \aroma2101\ \eatree4301
\sim
\frac{1}{4}\ \aroma2101\ \eatree2112
+\frac{1}{4}\ \eatree4203
+\frac{1}{4}\ \eatree4202
-\frac{3}{4}\ \aroma2101\ \eatree4302
-\frac{1}{4}\ \aroma3202\ \eatree3201\\&
-\frac{1}{4}\ \aroma3201\ \eatree3201
+\frac{3}{4}\ \aroma2101\ \aroma3301\ \eatree3201
-\frac{1}{4}\ \aroma2101\ \aroma1101\ \eatree3201
-\frac{1}{2}\ \aroma2101\ \aroma2101\ \eatree3201
+\frac{1}{2}\ \aroma2101\ \eatree3101
\\
%%%
-\frac{1}{8}\ \aroma1101\ \eatree2111
&+\frac{1}{8}\ \aroma1101\ \eatree4301
\sim
\frac{1}{8}\ \aroma2101\ \eatree2112
+\frac{1}{8}\ \eatree3217
-\frac{1}{8}\ \aroma2201\ \eatree3201
-\frac{3}{8}\ \aroma1101\ \eatree4302\\&
+\frac{3}{8}\ \aroma1101\ \aroma3301\ \eatree3201
-\frac{1}{8}\ \aroma1101\ \aroma1101\ \eatree3201
-\frac{1}{4}\ \aroma2101\ \aroma1101\ \eatree3201
+\frac{1}{4}\ \aroma1101\ \eatree3101
\\
%%%
\frac{1}{8}\ \aroma3301\ \eatree2111
&-\frac{1}{8}\ \aroma3301\ \eatree4301
\sim
-\frac{1}{8}\ \aroma3301\ \eatree2112
-\frac{1}{4}\ \eatree5403
-\frac{1}{8}\ \eatree5404
+\frac{1}{4}\ \aroma4401\ \eatree3201
+\frac{1}{8}\ \aroma4402\ \eatree3201\\&
+\frac{5}{8}\ \aroma3301\ \eatree4302
-\frac{5}{8}\ \aroma3301\ \aroma3301\ \eatree3201
+\frac{1}{8}\ \aroma1101\ \aroma3301\ \eatree3201
+\frac{1}{4}\ \aroma2101\ \aroma3301\ \eatree3201
-\frac{1}{4}\ \aroma3301\ \eatree3101
\\
%%%
\frac{1}{4}\ \eatree5402
&-\frac{1}{4}\ \aroma3301\ \eatree4301
\sim
-\frac{1}{2}\ \eatree5403
+\frac{1}{2}\ \aroma3301\ \eatree4302
+\frac{5}{4}\ \aroma4401\ \eatree3201
+\frac{1}{4}\ \aroma4402\ \eatree3201\\&
-\frac{5}{4}\ \aroma3301\ \aroma3301\ \eatree3201
-\frac{1}{4}\ \aroma2201\ \eatree3201
-\frac{1}{4}\ \aroma2202\ \eatree3201
+\frac{1}{4}\ \aroma1101\ \aroma3301\ \eatree3201
+\frac{1}{2}\ \aroma2101\ \aroma3301\ \eatree3201
-\frac{1}{2}\ \aroma3201\ \eatree3201
%%%
\end{align*}

\section{Coefficients of the order two Runge-Kutta method}
\label{section:RK_coefficients}

The coefficients of the Runge-Kutta method~\eqref{equation:RK_order_2} used in Section~\ref{section:numerical_experiments} are
\begin{equation*}
\begin{aligned}[c]
c_2&=0.621729189582953540,\\
d_1&=-0.898931652839146019,\\
d_3&=0.318924515019668897,\\
\widehat{a}_{31}&=0.887706593835748395,\\
\widehat{a}_{41}&=0.0547449506054026516,\\
\widehat{a}_{22}&=1-\widehat{a}_{21},\\
\widehat{a}_{43}&=1-\widehat{a}_{41}-\widehat{a}_{42}.
\end{aligned}
\qquad\qquad
\begin{aligned}[c]
c_3&=0.102032386582165330,\\
d_2&=-1.66233102561284629,\\
\widehat{a}_{21}&=0.584372887990673524,\\
\widehat{a}_{32}&=-0.345018694936693742,\\
\widehat{a}_{42}&=-0.0205123070437693053,\\
\widehat{a}_{33}&=1-\widehat{a}_{31}-\widehat{a}_{32},\\
&
\end{aligned}
\end{equation*}

\newpage
\section{Decomposition of the operators in exotic aromatic forests}
\label{section:Tables_B_series}

\begin{table}[!htb]
	\setcellgapes{3pt}
    \centering
    \begin{tabularx}{\textwidth}{|c|c|c|X|}
	\hline
	$\text{Forest }  \gamma$ &~$\text{Differential }  F(\gamma)(\phi)$ & \multicolumn{1}{c|}{Exact~$e(\gamma)$} & \multicolumn{1}{c|}{Numerical approximation~$a(\gamma)$} \\
	\hhline{|=|=|=|=|}
	\multicolumn{4}{|c|}{Terms of order 4 w.r.t.~$\phi$}\\
	\hhline{|=|=|=|=|}
	$ \eatree1021$ &~$\sigma^4\Delta^2\phi$ &~$\frac{1}{8}$ &~$\frac{1}{8}$ \\
	\hline
	$ \eatree3211$ &~$\sigma^4 G^{-1} \Delta\phi''(g,g)$ &~$-\frac{1}{4}$ &~$-\frac{1}{4}$ \\
	\hline
	$ \eatree5401$ &~$\sigma^4 G^{-2} \phi^{(4)}(g,g,g,g)$ &~$\frac{1}{8}$ &~$\frac{1}{8}$ \\
	\hhline{|=|=|=|=|}
	\multicolumn{4}{|c|}{Terms of order 3 w.r.t.~$\phi$}\\
	\hhline{|=|=|=|=|}
	$ \eatree2013$ &~$\sigma^2\Delta\phi'f$ &~$\frac{1}{2}$ &~$\frac{1}{2}$ \\
	\hline
	$ \eatree4201$ &~$\sigma^2 G^{-1} \phi^{(3)}(g,g,f)$ &~$-\frac{1}{2}$ &~$-\frac{1}{2}$ \\
	\hline
	$ \eatree5402$ &~$\sigma^4 G^{-2} \phi^{(3)}(g,g,g'g)$ &~$1$ &~$1$ \\
	\hline
	$ \eatree3212$ &~$\sigma^4 G^{-1} \sum  \phi^{(3)}(g,g'e_i,e_i)$ &~$-\frac{1}{2}$ &~$-\frac{1}{2}$ \\
	\hline
	$ \aroma2101\ \eatree4301$ &~$\sigma^2 G^{-2} (g,f)\phi^{(3)}(g,g,g)$ &~$\frac{1}{2}$ &~$\frac{1}{2}$ \\
	\hline
	$ \aroma1101\ \eatree4301$ &~$\sigma^4 G^{-2} \Div(g)\phi^{(3)}(g,g,g)$ &~$\frac{1}{4}$ &~$\frac{1}{4}$ \\
	\hline
	$ \aroma3301\ \eatree4301$ &~$\sigma^4 G^{-2} (g,g'g)\phi^{(3)}(g,g,g)$ &~$-\frac{3}{4}$ &~$-\frac{3}{4}$ \\
	\hline
	$ \aroma2101\ \eatree2111$ &~$\sigma^2 G^{-1} (g,f)\Delta\phi'(g)$ &~$-\frac{1}{2}$ &~$-\frac{1}{2}$ \\
	\hline
	$ \aroma1101\ \eatree2111$ &~$\sigma^4 G^{-1} \Div(g)\Delta\phi'(g)$ &~$-\frac{1}{4}$ &~$-\frac{1}{4}$ \\
	\hline
	$ \aroma3301\ \eatree2111$ &~$\sigma^4 G^{-2} (g,g'g)\Delta\phi'(g)$ &~$\frac{1}{4}$ &~$\frac{1}{4}$ \\
	\hhline{|=|=|=|=|}
	\multicolumn{4}{|c|}{Terms of order 2 w.r.t.~$\phi$}\\
	\hhline{|=|=|=|=|}
	$\eatree3001$ &~$\phi''(f,f)$ &~$\frac{1}{2}$ &~$\frac{1}{2}$ \\
	\hline
	$ \eatree2012$ &~$\sigma^2\sum  \phi''(f'e_i,e_i)$ &~$\frac{1}{2}$ &~$b^T d$ \\
	\hline
	$ \eatree4202$ &~$\sigma^2 G^{-1} \phi''(g,g'f)$ &~$-1$ &~$-1$ \\
	\hline
	$ \eatree4203$ &~$\sigma^2 G^{-1} \phi''(g,f'g)$ &~$-1$ &~$-b^T d-\widehat{b}^T d$ \\
	\hline
	$ \eatree5403$ &~$\sigma^4 G^{-2} \phi''(g,g'g'g)$ &~$\frac{3}{2}$ &~$-2\widehat{b}^T (d\diam \widehat{A}d)-(\widehat{b}^T d)^2+2\widehat{b}^T d+1$ \\
	\hline
	\end{tabularx}
	\renewcommand\thetable{2 (Part 1/7)}
	\caption{Coefficients in exotic aromatic B-series of the operators~$\LL^2\phi/2=\sum e(\gamma)   F(\gamma)(\phi)$ and~$\AA_1\phi=\sum a(\gamma)   F(\gamma)(\phi)$ for consistent Runge-Kutta methods of the form~\eqref{equation:defRK}.}
%	\label{table:coef_eatrees_1}
	\setcellgapes{1pt}
\end{table}

\begin{table}[!htb]
	\setcellgapes{3pt}
    \centering
    \begin{tabularx}{\textwidth}{|c|c|c|X|}
	\hline
	$  \gamma$ &~$  F(\gamma)(\phi)$ & \multicolumn{1}{c|}{$e(\gamma)$} & \multicolumn{1}{c|}{$a(\gamma)$} \\
	\hhline{|=|=|=|=|}
	$ \eatree5405$ &~$\sigma^2 G^{-2} \phi''(g'g,g'g)$ &~$\frac{1}{4}$ &~$-\widehat{b}^T (d\diam \widehat{A}d)-\frac{1}{2}(\widehat{b}^T d)^2+\widehat{b}^T d$ \\
	\hline
	$ \eatree5404$ &~$\sigma^4 G^{-2} \phi''(g,g''(g,g))$ &~$\frac{1}{2}$ &~$2\widehat{b}^T d^{\diam 2}-2\widehat{b}^T (\delta \diam d^{\diam 2})+\frac{1}{2}$ \\
	\hline
	$ \eatree3217$ &~$\sigma^4 G^{-1} \phi''(g,\Delta g)$ &~$-\frac{1}{2}$ &~$-\frac{1}{2}$ \\
	\hline
	$ \eatree3218$ &~$\sigma^4 G^{-1} \sum  \phi''(g'e_i,g'e_i)$ &~$-\frac{1}{4}$ &~$\widehat{b}^T (d\diam \widehat{A}d)+\frac{1}{2}(\widehat{b}^T d)^2-\widehat{b}^T d$ \\
	\hline
	$ \eatree3213$ &~$\sigma^4 G^{-1} \sum \phi''(g''(g,e_i),e_i)$ &~$0$ &~$\widehat{b}^T (\delta \diam d^{\diam 2})-\widehat{b}^T d^{\diam 2}$ \\
	\hline
	$\aroma2101\ \eatree3101$ &~$ G^{-1} (g,f)\phi''(g,f)$ &~$-1$ &~$-1$ \\
	\hline
	$ \aroma1101\ \eatree3101$ &~$\sigma^2 G^{-1} \Div(g) \phi''(g,f)$ &~$-\frac{1}{2}$ &~$-\frac{1}{2}$ \\
	\hline
	$ \aroma3301\ \eatree3101$ &~$\sigma^2 G^{-2} (g,g'g)\phi''(g,f)$ &~$\frac{1}{2}$ &~$\frac{1}{2}$ \\
	\hline
	$ \aroma2101\ \eatree2112$ &~$\sigma^2 G^{-1} (g,f)\sum \phi''(e_i,g'e_i)$ &~$-\frac{1}{2}$ &~$-\widehat{b}^T d$ \\
	\hline
	$ \aroma1101\ \eatree2112$ &~$\sigma^4 G^{-1} \Div(g)\sum \phi''(e_i,g'e_i)$ &~$-\frac{1}{4}$ &~$-\frac{1}{2}\widehat{b}^T d$ \\
	\hline
	$ \aroma3301\ \eatree2112$ &~$\sigma^4 G^{-2} (g,g'g)\sum \phi''(e_i,g'e_i)$ &~$\frac{1}{4}$ &~$\frac{1}{2}\widehat{b}^T d$ \\
	\hline
	$ \aroma2101\ \eatree4302$ &~$\sigma^2 G^{-2} (g,f)\phi''(g,g'g)$ &~$2$ &~$2\widehat{b}^T d+1$ \\
	\hline
	$ \aroma1101\ \eatree4302$ &~$\sigma^4 G^{-2} \Div(g)\phi''(g,g'g)$ &~$1$ &~$\widehat{b}^T d+\frac{1}{2}$ \\
	\hline
	$ \aroma3301\ \eatree4302$ &~$\sigma^4 G^{-3} (g,g'g)\phi''(g,g'g)$ &~$-\frac{5}{2}$ &~$2\widehat{b}^T (d\diam \widehat{A}d)+(\widehat{b}^T d)^2-3\widehat{b}^T d-\frac{3}{2}$ \\
	\hline
	$ \aroma3201\ \eatree3201$ &~$\sigma^2 G^{-2} (g,g'f) \phi''(g,g)$ &~$1$ &~$1$ \\
	\hline
	$ \aroma3202\ \eatree3201$ &~$\sigma^2 G^{-2} (g,f'g) \phi''(g,g)$ &~$\frac{1}{2}$ &~$b^T d$ \\
	\hline
	$ \aroma2201\ \eatree3201$ &~$\sigma^4 G^{-2} \sum \partial_{ij} g_i g_j \phi''(g,g)$ &~$\frac{1}{2}$ &~$\frac{1}{2}$ \\
	\hline
	$ \aroma2202\ \eatree3201$ &~$\sigma^4 G^{-2} \sum \partial_j g_i \partial_i g_j \phi''(g,g)$ &~$\frac{1}{4}$ &~$\frac{1}{4}$ \\
	\hline
	$ \aroma4401\ \eatree3201$ &~$\sigma^4 G^{-3} (g,g'g'g) \phi''(g,g)$ &~$-\frac{7}{4}$ &~$\widehat{b}^T (d\diam \widehat{A}d)+\frac{1}{2}(\widehat{b}^T d)^2-\widehat{b}^T d-\frac{3}{2}$ \\
	\hline
	$ \aroma4402\ \eatree3201$ &~$\sigma^4 G^{-3} (g,g''(g,g)) \phi''(g,g)$ &~$-\frac{1}{2}$ &~$\widehat{b}^T (\delta \diam d^{\diam 2})-\widehat{b}^T d^{\diam 2}-\frac{1}{2}$ \\
	\hline
	$\aroma2101\ \aroma2101\ \eatree3201$ &~$ G^{-2} (g,f)^2 \phi''(g,g)$ &~$\frac{1}{2}$ &~$\frac{1}{2}$ \\
	\hline
	$ \aroma2101\ \aroma1101\ \eatree3201$ &~$\sigma^2 G^{-2} (g,f) \Div(g)  \phi''(g,g)$ &~$\frac{1}{2}$ &~$\frac{1}{2}$ \\
	\hline
	$ \aroma2101\ \aroma3301\ \eatree3201$ &~$\sigma^2 G^{-3} (g,f) (g,g'g) \phi''(g,g)$ &~$-2$ &~$-\widehat{b}^T d-\frac{3}{2}$ \\
	\hline
	\end{tabularx}
	\renewcommand\thetable{2 (Part 2/7)}
	\caption{Coefficients in exotic aromatic B-series of the operators~$\LL^2\phi/2=\sum e(\gamma)   F(\gamma)(\phi)$ and~$\AA_1\phi=\sum a(\gamma)   F(\gamma)(\phi)$ for consistent Runge-Kutta methods of the form~\eqref{equation:defRK}.}
	\setcellgapes{1pt}
\end{table}

\begin{table}[!htb]
	\setcellgapes{3pt}
	\centering
	\begin{tabularx}{\textwidth}{|c|c|X|}
	\hline
	$  \gamma$ & \multicolumn{1}{c|}{$e(\gamma)$} & \multicolumn{1}{c|}{$a(\gamma)$} \\
	\hhline{|=|=|=|}
	$ \aroma1101\ \aroma1101\ \eatree3201$ &~$\frac{1}{8}$ &~$\frac{1}{8}$ \\
	\hline
	$ \aroma1101\ \aroma3301\ \eatree3201$ &~$-1$ &~$-\frac{1}{2}\widehat{b}^T d-\frac{3}{4}$ \\
	\hline
	$ \aroma3301\ \aroma3301\ \eatree3201$ &~$\frac{19}{8}$ &~$-\widehat{b}^T (d\diam \widehat{A}d)-\frac{1}{2}(\widehat{b}^T d)^2+\frac{3}{2}\widehat{b}^T d+\frac{15}{8}$ \\
	\hhline{|=|=|=|}
	\multicolumn{3}{|c|}{Terms of order 1 w.r.t.~$\phi$}\\
	\hhline{|=|=|=|}
	$\eatree3002$ &~$\frac{1}{2}$ &~$b^T c$ \\
	\hline
	$ \eatree2011$ &~$\frac{1}{4}$ &~$\frac{1}{2} b^T d^{\diam 2}$ \\
	\hline
	$ \eatree4204$ &~$-\frac{1}{2}$ &~$\widehat{b}^T (d \diam \widehat{A}c) -\widehat{b}^T d$ \\
	\hline
	$ \eatree4205$ &~$-\frac{1}{2}$ &~$\widehat{b}^T A((\delta-\ind)\diam d) -\widehat{b}^T db^T d$ \\
	\hline
	$ \eatree4206$ &~$0$ &~$b^T (d \diam \widehat{A}((\delta-\ind)\diam d))$ \\
	\hline
	$ \eatree4207$ &~$-\frac{1}{4}$ &~$-\frac{1}{2}b^T (\delta \diam d^{\diam 2})$ \\
	\hline
	$ \eatree4208$ &~$0$ &~$\widehat{b}^T (\delta \diam c \diam d) -\widehat{b}^T (c \diam d)$ \\
	\hline
	$ \eatree3214$ &~$-\frac{1}{4}$ &~$\frac{1}{2}\widehat{b}^T (d \diam \widehat{A}d^{\diam 2})-\frac{1}{2}\widehat{b}^T d$ \\
	\hline
	 &  &~$~$ \\[-3ex]
	 &  &~$-\widehat{b}^T (d \diam (\widehat{A} d)^{\diam 2})-3 \widehat{b}^T (d \diam \widehat{A} (d \diam \widehat{A} ((\ind-\delta)\diam d)))~$ \\[-5ex]
	$ \eatree5406$ &~$\frac{1}{2}$ &~$-2\widehat{b}^T d\widehat{b}^T (d \diam \widehat{A}d)+(\widehat{b}^T d)^2+\widehat{b}^T d$ \\
	\hline
	$ \eatree5407$ &~$\frac{1}{4}$ &~$\widehat{b}^T (d \diam \widehat{A}d^{\diam 2})-\frac{3}{2}\widehat{b}^T (d \diam \widehat{A}(\delta \diam d^{\diam 2})) +\widehat{b}^T d(\widehat{b}^T d^{\diam 2}-\widehat{b}^T (\delta \diam d^{\diam 2}))+\frac{1}{2}\widehat{b}^T d$ \\
	\hline
	$ \eatree5408$ &~$0$ &~$-2\widehat{b}^T (d^{\diam 2}\diam \widehat{A}d)
	+(\widehat{b}^T d+1)\widehat{b}^T d^{\diam 2}-\widehat{b}^T d \widehat{b}^T (\delta \diam d^{\diam 2})$ \\
	\hline
	$ \eatree3215$ &~$0$ &~$\widehat{b}^T (d^{\diam 2}\diam \widehat{A}d)-\frac{1}{2}\widehat{b}^T d^{\diam 2}$ \\
	\hline
	$ \eatree3216$ &~$0$ &~$\frac{1}{2}\widehat{b}^T (\delta \diam d^{\diam 3})-\frac{1}{2}\widehat{b}^T d^{\diam 3}$ \\
	\hline
	$\aroma2101\ \eatree3102$ &~$-\frac{1}{2}$ &~$-\widehat{b}^T c$ \\
	\hline
	\end{tabularx}
	\renewcommand\thetable{2 (Part 3/7)}
	\caption{Coefficients in exotic aromatic B-series of the operators~$\LL^2\phi/2=\sum e(\gamma)   F(\gamma)(\phi)$ and~$\AA_1\phi=\sum a(\gamma)   F(\gamma)(\phi)$ for consistent Runge-Kutta methods of the form~\eqref{equation:defRK}.}
	\setcellgapes{1pt}
\end{table}

\begin{table}[!htb]
	\setcellgapes{3pt}
	\centering
	\begin{tabularx}{\textwidth}{|c|c|X|}
	\hline
	$  \gamma$ & \multicolumn{1}{c|}{$e(\gamma)$} & \multicolumn{1}{c|}{$a(\gamma)$} \\
	\hhline{|=|=|=|}
	$ \aroma1101\ \eatree3102$ &~$-\frac{1}{4}$ &~$-\frac{1}{2} \widehat{b}^T c$ \\
	\hline
	$ \aroma3301\ \eatree3102$ &~$\frac{1}{4}$ &~$\frac{1}{2} \widehat{b}^T c$ \\
	\hline
	$\aroma2101\ \eatree3103$ &~$-\frac{1}{2}$ &~$-b^T (\delta \diam c)$ \\
	\hline
	$ \aroma1101\ \eatree3103$ &~$-\frac{1}{4}$ &~$-\frac{1}{2} b^T (\delta \diam d^{\diam 2})$ \\
	\hline
	$ \aroma3301\ \eatree3103$ &~$\frac{1}{4}$ &~$b^T (d \diam \widehat{A}((\ind-\delta)\diam d)))+\frac{1}{2}b^T (\delta \diam d^{\diam 2})$ \\
	\hline
	 &  &~$\widehat{b}^T (c \diam \widehat{A}((\ind-\delta)\diam d))+(\widehat{b}^T d)^2+\widehat{b}^T d$ \\[-4ex]
	$\aroma2101\ \eatree4303$ &~$1$ &~$+\widehat{b}^T (d \diam \widehat{A}((\ind-\delta)\diam d))-\widehat{b}^T (d \diam \widehat{A}(\delta \diam c))$ \\
	\hline
	 &  &~$\frac{1}{2}\widehat{b}^T (d \diam \widehat{A}((\ind-\delta) \diam d))+\frac{1}{2}(\widehat{b}^T d)^2+\frac{1}{2}\widehat{b}^T d$ \\[-4ex]
	$ \aroma1101\ \eatree4303$ &~$\frac{1}{2}$ &~$+\frac{1}{2}\widehat{b}^T (d^{\diam 2} \diam \widehat{A}((\ind-\delta) \diam d))-\frac{1}{2}\widehat{b}^T (d \diam \widehat{A}(\delta \diam d^{\diam 2}))$ \\
	\hline
	 &  &~$3 \widehat{b}^T (d \diam \widehat{A} (d \diam \widehat{A} ((\ind-\delta)\diam d)))
	 +(2\widehat{b}^T d-\frac{1}{2})\widehat{b}^T (d \diam \widehat{A}d)$ \\
	 &  &~$-3\widehat{b}^T (d \diam (\widehat{A}((\ind-\delta) \diam d))^{\diam 2})
	+\frac{1}{2}\widehat{b}^T (d \diam \widehat{A}(\delta \diam d))
	+\widehat{b}^T (d \diam (\widehat{A} d)^{\diam 2})$ \\[-4ex]
	$ \aroma3301\ \eatree4303$ &~$-1$ &~$
	+\frac{1}{2}\widehat{b}^T (d^{\diam 2} \diam \widehat{A}((\delta-\ind) \diam d))
	+\frac{1}{2}\widehat{b}^T (d \diam \widehat{A}(\delta \diam d^{\diam 2}))
	-\frac{3}{2}(\widehat{b}^T d)^2
	-\frac{3}{2}\widehat{b}^T d
	$ \\
	\hline
	$ \aroma2101\ \eatree2113$ &~$-\frac{1}{4}$ &~$-\frac{1}{2}\widehat{b}^T d^{\diam 2}$ \\
	\hline
	$ \aroma1101\ \eatree2113$ &~$-\frac{1}{8}$ &~$-\frac{1}{4}\widehat{b}^T d^{\diam 2}$ \\
	\hline
	$ \aroma3301\ \eatree2113$ &~$\frac{1}{8}$ &~$\frac{1}{4}\widehat{b}^T d^{\diam 2}$ \\
	\hline
	$ \aroma2101\ \eatree4304$ &~$\frac{1}{4}$ &~$\frac{1}{2}\widehat{b}^T (\delta \diam d^{\diam 2})$ \\
	\hline
	$ \aroma1101\ \eatree4304$ &~$\frac{1}{8}$ &~$\frac{1}{4}\widehat{b}^T (\delta \diam d^{\diam 2})$ \\
	\hline
	$ \aroma3301\ \eatree4304$ &~$-\frac{1}{8}$ &~$\widehat{b}^T (d^{\diam 2}\diam \widehat{A}d)-(\widehat{b}^T d+\frac{1}{2})\widehat{b}^T d^{\diam 2}+(\widehat{b}^T d-\frac{1}{4})\widehat{b}^T (\delta \diam d^{\diam 2})$ \\
	\hline
	$ \aroma3201\ \eatree3202$ &~$\frac{1}{2}$ &~$-\widehat{b}^T (d \diam \widehat{A}c)+\widehat{b}^T d$ \\
	\hline
	$ \aroma3202\ \eatree3202$ &~$\frac{1}{2}$ &~$\widehat{b}^T (\delta \diam A((\ind-\delta)\diam d))+\widehat{b}^T d b^T d$ \\
	\hline
	$ \aroma2201\ \eatree3202$ &~$\frac{1}{4}$ &~$-\frac{1}{2}\widehat{b}^T (d \diam \widehat{A}d^{\diam 2})+\frac{1}{2}\widehat{b}^T d$ \\
	\hline
	$ \aroma2202\ \eatree3202$ &~$0$ &~$-\widehat{b}^T (d^{\diam 2}\diam \widehat{A}d)+\frac{1}{2}\widehat{b}^T (\delta \diam d^{\diam 2})$ \\
	\hline
	\end{tabularx}
	\renewcommand\thetable{2 (Part 4/7)}
	\caption{Coefficients in exotic aromatic B-series of the operators~$\LL^2\phi/2=\sum e(\gamma)   F(\gamma)(\phi)$ and~$\AA_1\phi=\sum a(\gamma)   F(\gamma)(\phi)$ for consistent Runge-Kutta methods of the form~\eqref{equation:defRK}.}
	\setcellgapes{1pt}
\end{table}

\begin{table}[!htb]
	\setcellgapes{3pt}
	\centering
	\begin{tabularx}{\textwidth}{|c|c|X|}
	\hline
	$  \gamma$ & \multicolumn{1}{c|}{$e(\gamma)$} & \multicolumn{1}{c|}{$a(\gamma)$} \\
	\hhline{|=|=|=|}
	 &  &~$\widehat{b}^T (d \diam (\widehat{A} d)^{\diam 2})+3 \widehat{b}^T (d \diam \widehat{A} (d \diam \widehat{A} ((\ind-\delta)\diam d)))$ \\[-2ex]
	$ \aroma4401\ \eatree3202$ &~$-\frac{1}{2}$ &~$
	+2\widehat{b}^T (d^{\diam 2}\diam \widehat{A}d)
	+2\widehat{b}^T d \widehat{b}^T (d \diam \widehat{A}d)
	-\widehat{b}^T (\delta \diam d^{\diam 2})
	-(\widehat{b}^T d)^2
	-\widehat{b}^T d
	$ \\
	\hline
	$ \aroma4402\ \eatree3202$ &~$-\frac{1}{4}$ &~$\frac{1}{2}\widehat{b}^T (d \diam \widehat{A}((3\delta-2\cdot\ind) \diam d^{\diam 2}))+\widehat{b}^T d(\widehat{b}^T (\delta \diam d^{\diam 2})-\widehat{b}^T d^{\diam 2})-\frac{1}{2}\widehat{b}^T d$ \\
	\hline
	$\aroma2101\ \aroma2101\ \eatree3202$ &~$\frac{1}{2}$ &~$\widehat{b}^T (\delta \diam c)$ \\
	\hline
	$ \aroma2101\ \aroma1101\ \eatree3202$ &~$\frac{1}{2}$ &~$\frac{1}{2}\widehat{b}^T (\delta \diam d^{\diam 2})+\frac{1}{2}\widehat{b}^T (\delta \diam c)$ \\
	\hline
	 &  &~$\widehat{b}^T (c \diam \widehat{A}((\delta-\ind) \diam d))+\widehat{b}^T (d \diam \widehat{A}((\delta-\ind) \diam d))$ \\[-2ex]
	$ \aroma2101\ \aroma3301\ \eatree3202$ &~$-\frac{3}{2}$ &~$+\widehat{b}^T (d \diam \widehat{A}(\delta \diam c))-\frac{1}{2}\widehat{b}^T (\delta \diam d^{\diam 2})-\frac{1}{2}\widehat{b}^T (\delta \diam c)-(\widehat{b}^T d)^2-\widehat{b}^T d$ \\
	\hline
	$ \aroma1101\ \aroma1101\ \eatree3202$ &~$\frac{1}{8}$ &~$\frac{1}{4}\widehat{b}^T (\delta \diam d^{\diam 2})$ \\
	\hline
	 &  &~$\frac{1}{2}\widehat{b}^T (d \diam \widehat{A}((\delta-\ind) \diam d))+\frac{1}{2}\widehat{b}^T (d \diam \widehat{A}(\delta \diam d^{\diam 2}))$ \\[-2ex]
	$ \aroma1101\ \aroma3301\ \eatree3202$ &~$-\frac{3}{4}$ &~$+\frac{1}{2}\widehat{b}^T (d^{\diam 2} \diam \widehat{A}((\delta-\ind) \diam d))-\frac{1}{2}\widehat{b}^T (\delta \diam d^{\diam 2})-\frac{1}{2}(\widehat{b}^T d)^2-\frac{1}{2}\widehat{b}^T d$ \\
	\hline
	 &  &~$3\widehat{b}^T (d \diam (\widehat{A}((\ind-\delta) \diam d))^{\diam 2})
	 -\frac{1}{2}\widehat{b}^T (d^{\diam 2} \diam \widehat{A}((\delta+\ind) \diam d))
	$ \\
	 &  &~$
	 -\widehat{b}^T (d \diam (\widehat{A} d)^{\diam 2})-3 \widehat{b}^T (d \diam \widehat{A} (d \diam \widehat{A} ((\ind-\delta)\diam d)))
	$ \\
	 &  &~$
	 +(\frac{1}{2}-2\widehat{b}^T d)\widehat{b}^T (d \diam \widehat{A}d)
	-\frac{1}{2}\widehat{b}^T (d \diam \widehat{A}(\delta \diam d))
	-\frac{1}{2}\widehat{b}^T (d \diam \widehat{A}(\delta \diam d^{\diam 2}))
	$ \\[-2ex]
	$ \aroma3301\ \aroma3301\ \eatree3202$ &~$\frac{9}{8}$ &~$
	+\frac{9}{2}\widehat{b}^T (\delta \diam d^{\diam 3})
	 -\frac{15}{4}\widehat{b}^T (\delta \diam d^{\diam 2})
	 +\frac{3}{2}(\widehat{b}^T d)^2
	 +\frac{3}{2}\widehat{b}^T d
	$ \\
	\hline
	$\aroma3101\ \eatree2101$ &~$-\frac{1}{2}$ &~$-b^T c~$ \\
	\hline
	$\aroma3102\ \eatree2101$ &~$-\frac{1}{2}$ &~$-\frac{1}{2}~$ \\
	\hline
	$ \aroma2102\ \eatree2101$ &~$-\frac{1}{2}$ &~$-\frac{1}{2}~$ \\
	\hline
	$ \aroma2103\ \eatree2101$ &~$-\frac{1}{4}$ &~$-\frac{1}{2}b^T d^{\diam 2}~$ \\
	\hline
	$ \aroma2104\ \eatree2101$ &~$-\frac{1}{2}$ &~$-b^T d~$ \\
	\hline
	$ \aroma4301\ \eatree2101$ &~$\frac{3}{2}$ &~$-\widehat{b}^T (d \diam \widehat{A}c) + \widehat{b}^T d+1$ \\
	\hline
	$ \aroma4302\ \eatree2101$ &~$\frac{3}{2}$ &~$b^T (d \diam \widehat{A}((\ind-\delta)\diam d))+\widehat{b}^T A((\ind-\delta)\diam d)+(\widehat{b}^T d+2)b^T d$ \\
	\hline
	$ \aroma4303\ \eatree2101$ &~$\frac{1}{2}$ &~$\widehat{b}^T (c \diam d)-\widehat{b}^T (\delta\diam c \diam d)+\frac{1}{2}$ \\
	\hline
	$ \aroma4304\ \eatree2101$ &~$\frac{1}{4}$ &~$\frac{1}{2}b^T (\delta \diam d^{\diam 2})$ \\
	\hline
	$ \aroma3302\ \eatree2101$ &~$\frac{3}{4}$ &~$-\frac{1}{2}\widehat{b}^T (d \diam \widehat{A}d^{\diam 2})+\frac{1}{2}\widehat{b}^T d+\frac{1}{2}$ \\
	\hline
	\end{tabularx}
	\renewcommand\thetable{2 (Part 5/7)}
	\caption{Coefficients in exotic aromatic B-series of the operators~$\LL^2\phi/2=\sum e(\gamma)   F(\gamma)(\phi)$ and~$\AA_1\phi=\sum a(\gamma)   F(\gamma)(\phi)$ for consistent Runge-Kutta methods of the form~\eqref{equation:defRK}.}
	\setcellgapes{1pt}
\end{table}

\begin{table}[!htb]
	\setcellgapes{3pt}
	\centering
	\begin{tabularx}{\textwidth}{|c|c|X|}
	\hline
	$  \gamma$ & \multicolumn{1}{c|}{$e(\gamma)$} & \multicolumn{1}{c|}{$a(\gamma)$} \\
	\hhline{|=|=|=|}
	$ \aroma3303\ \eatree2101$ &~$\frac{1}{4}$ &~$\frac{1}{2}\widehat{b}^T d^{\diam 3}-\frac{1}{2}\widehat{b}^T (\delta \diam d^{\diam 3})+\frac{1}{4}$ \\
	\hline
	$ \aroma3304\ \eatree2101$ &~$\frac{1}{2}$ &~$-\widehat{b}^T (d^{\diam 2} \diam \widehat{A}d)+\frac{3}{2}\widehat{b}^T d^{\diam 2}-\widehat{b}^T (\delta \diam d^{\diam 2})+\frac{1}{2}$ \\
	\hline
	$ \aroma3305\ \eatree2101$ &~$\frac{1}{4}$ &~$-\widehat{b}^T (d \diam \widehat{A}d)-\frac{1}{2}(\widehat{b}^T d)^2+\widehat{b}^T d$ \\
	\hline
	 &  &~$\widehat{b}^T (d \diam (\widehat{A} d)^{\diam 2})+3 \widehat{b}^T (d \diam \widehat{A} (d \diam \widehat{A} ((\ind-\delta)\diam d)))$ \\[-4ex]
	$ \aroma5501\ \eatree2101$ &~$-\frac{9}{4}$ &~$
	+(2\widehat{b}^T d+3)\widehat{b}^T (d \diam \widehat{A}d)
	+\frac{1}{2}(\widehat{b}^T d)^2
	-4\widehat{b}^T d
	-1
	$ \\
	\hline
	 &  &~$\frac{1}{2}\widehat{b}^T (d \diam \widehat{A}((3\delta-2\cdot\ind)\diam d^{\diam 2}))+2\widehat{b}^T (d^{\diam 2}\diam \widehat{A}d)$ \\[-2ex]
	$ \aroma5502\ \eatree2101$ &~$-\frac{7}{4}$ &~$
	-(2\widehat{b}^T d+3)\widehat{b}^T d^{\diam 2}
	+2(\widehat{b}^T d+1)\widehat{b}^T (\delta \diam d^{\diam 2})
	-\frac{1}{2}\widehat{b}^T d
	-\frac{3}{2}
	$ \\
	\hline
	$ \aroma5503\ \eatree2101$ &~$-\frac{1}{8}$ &~$-\frac{1}{8}$ \\
	\hline
	$ \aroma1111\ \eatree2101$ &~$-\frac{1}{8}$ &~$-\frac{1}{8}$ \\
	\hline
	$\aroma2101\ \aroma3201\ \eatree2101$ &~$\frac{3}{2}$ &~$\widehat{b}^T c+1$ \\
	\hline
	$ \aroma1101\ \aroma3201\ \eatree2101$ &~$\frac{3}{4}$ &~$\frac{1}{2}\widehat{b}^T c+\frac{1}{2}$ \\
	\hline
	$ \aroma3301\ \aroma3201\ \eatree2101$ &~$-\frac{9}{4}$ &~$
	\widehat{b}^T (d \diam \widehat{A}c)
	-\frac{1}{2}\widehat{b}^T c
	-\widehat{b}^T d
	-\frac{3}{2}
	$ \\
	\hline
	$\aroma2101\ \aroma3202\ \eatree2101$ &~$\frac{1}{2}$ &~$b^T (\delta \diam c)$ \\
	\hline
	$ \aroma1101\ \aroma3202\ \eatree2101$ &~$\frac{1}{4}$ &~$\frac{1}{2}b^T (\delta \diam d^{\diam 2})$ \\
	\hline
	 &  &~$
	 \widehat{b}^T (\delta \diam A((\delta-\ind) \diam d))
	~$ \\[-0.5ex]
	$ \aroma3301\ \aroma3202\ \eatree2101$ &~$-\frac{5}{4}$ &~$
	 +b^T (d \diam \widehat{A}((\delta-\ind) \diam d))
	-\frac{1}{2}b^T (\delta \diam d^{\diam 2})
	-(\widehat{b}^T d+1)b^T d
	$ \\
	\hline
	$ \aroma2101\ \aroma2201\ \eatree2101$ &~$\frac{3}{4}$ &~$\frac{1}{2}\widehat{b}^T d^{\diam 2}+\frac{1}{2}$ \\
	\hline
	$ \aroma1101\ \aroma2201\ \eatree2101$ &~$\frac{3}{8}$ &~$\frac{1}{4}\widehat{b}^T d^{\diam 2}+\frac{1}{4}$ \\
	\hline
	$ \aroma3301\ \aroma2201\ \eatree2101$ &~$-\frac{9}{8}$ &~$\frac{1}{2}\widehat{b}^T (d \diam \widehat{A}d^{\diam 2})-\frac{1}{4}\widehat{b}^T d^{\diam 2}-\frac{1}{2}\widehat{b}^T d-\frac{3}{4}$ \\
	\hline
	$ \aroma2101\ \aroma2202\ \eatree2101$ &~$\frac{1}{2}$ &~$\widehat{b}^T d$ \\
	\hline
	$ \aroma1101\ \aroma2202\ \eatree2101$ &~$\frac{1}{4}$ &~$\frac{1}{2}\widehat{b}^T d$ \\
	\hline
	$ \aroma3301\ \aroma2202\ \eatree2101$ &~$-\frac{1}{2}$ &~$\widehat{b}^T (d^{\diam 2}\diam \widehat{A}d)-\frac{1}{2}\widehat{b}^T (\delta \diam d^{\diam 2})-\frac{1}{2}\widehat{b}^T d-\frac{1}{4}$ \\
	\hline
	 &  &~$\widehat{b}^T (c \diam \widehat{A}((\delta-\ind) \diam d))+\widehat{b}^T (d \diam \widehat{A}(\delta \diam c))$ \\[-2ex]
	$ \aroma2101\ \aroma4401\ \eatree2101$ &~$-3$ &~$
	+\widehat{b}^T (d \diam \widehat{A}((\delta-\ind) \diam d))
	-(\widehat{b}^T d)^2
	-3\widehat{b}^T d
	-1
	$ \\
	\hline
	\end{tabularx}
	\renewcommand\thetable{2 (Part 6/7)}
	\caption{Coefficients in exotic aromatic B-series of the operators~$\LL^2\phi/2=\sum e(\gamma)   F(\gamma)(\phi)$ and~$\AA_1\phi=\sum a(\gamma)   F(\gamma)(\phi)$ for consistent Runge-Kutta methods of the form~\eqref{equation:defRK}.}
	\setcellgapes{1pt}
\end{table}

\begin{table}[!htb]
	\setcellgapes{3pt}
	\centering
	\begin{tabularx}{\textwidth}{|c|c|X|}
	\hline
	$  \gamma$ & \multicolumn{1}{c|}{$e(\gamma)$} & \multicolumn{1}{c|}{$a(\gamma)$} \\
	\hhline{|=|=|=|}
	 &  &~$\frac{1}{2}\widehat{b}^T (d^{\diam 2} \diam \widehat{A}((\delta-\ind) \diam d))+\frac{1}{2}\widehat{b}^T (d \diam \widehat{A}(\delta\diam d^{\diam 2}))$ \\[-2ex]
	$ \aroma1101\ \aroma4401\ \eatree2101$ &~$-\frac{3}{2}$ &~$
	+\frac{1}{2}\widehat{b}^T (d \diam \widehat{A}((\delta-\ind) \diam d))
	-\frac{1}{2}(\widehat{b}^T d)^2
	-\frac{3}{2}\widehat{b}^T d
	-\frac{1}{2}
	$ \\
	\hline
	 &  &~$3\widehat{b}^T (d \diam (\widehat{A}((\ind-\delta) \diam d))^{\diam 2})
	-\frac{1}{2}\widehat{b}^T (d^{\diam 2} \diam \widehat{A}((3\cdot\ind+\delta) \diam d))
	$ \\
	 &  &~$-2\widehat{b}^T (d \diam (\widehat{A} d)^{\diam 2})-6 \widehat{b}^T (d \diam \widehat{A} (d \diam \widehat{A} ((\ind-\delta)\diam d)))
	$ \\
	 &  &~$
	-\frac{1}{2}\widehat{b}^T (d \diam \widehat{A}(\delta \diam d^{\diam 2}))
	-\frac{1}{2}\widehat{b}^T (d \diam \widehat{A}(\delta \diam d))
	$ \\[-2ex]
	$ \aroma3301\ \aroma4401\ \eatree2101$ &~$\frac{23}{4}$ &~$
	-(4\widehat{b}^T d+\frac{5}{2})\widehat{b}^T (d \diam \widehat{A}d)
	+\widehat{b}^T (\delta \diam d^{\diam 2})
	+(\widehat{b}^T d)^2
	+\frac{13}{2}\widehat{b}^T d
	+3
	$ \\
	\hline
	$ \aroma2101\ \aroma4402\ \eatree2101$ &~$-\frac{3}{4}$ &~$-\frac{1}{2}\widehat{b}^T (\delta \diam d^{\diam 2})-\frac{1}{2}$ \\
	\hline
	$ \aroma1101\ \aroma4402\ \eatree2101$ &~$-\frac{3}{8}$ &~$-\frac{1}{4}\widehat{b}^T (\delta \diam d^{\diam 2})-\frac{1}{4}$ \\
	\hline
	 &  &~$\frac{1}{2}\widehat{b}^T (d \diam \widehat{A}((2\cdot\ind -3 \delta) \diam d^{\diam 2}))
	-\widehat{b}^T (d^{\diam 2}\diam \widehat{A}d)$ \\[-0.5ex]
	$ \aroma3301\ \aroma4402\ \eatree2101$ &~$\frac{13}{8}$ &~$
	+(2\widehat{b}^T d+\frac{3}{2})\widehat{b}^T d^{\diam 2}
	-(2\widehat{b}^T d+\frac{3}{4})\widehat{b}^T (\delta \diam d^{\diam 2})
	+\frac{1}{2}\widehat{b}^T d
	+\frac{5}{4}$ \\
	\hline
	$\aroma2101\ \aroma2101\ \aroma3301\ \eatree2101$ &~$-1$ &~$-\widehat{b}^T (\delta \diam c)-\frac{1}{2}$ \\
	\hline
	$ \aroma2101\ \aroma1101\ \aroma3301\ \eatree2101$ &~$-1$ &~$-\frac{1}{2}\widehat{b}^T (\delta \diam c)-\frac{1}{2}\widehat{b}^T (\delta \diam d^{\diam 2})-\frac{1}{2}$ \\
	\hline
	 &  &~$
	\widehat{b}^T (c \diam \widehat{A}((\ind-\delta)\diam d))
	+\widehat{b}^T (d \diam \widehat{A}((\ind-\delta)\diam d))
	$ \\[-0.5ex]
	$ \aroma2101\ \aroma3301\ \aroma3301\ \eatree2101$ &~$\frac{7}{2}$ &~$
	-\widehat{b}^T (d \diam \widehat{A}(\delta \diam c))
	+\frac{1}{2}\widehat{b}^T (\delta \diam c)
	+\frac{1}{2}\widehat{b}^T (\delta \diam d^{\diam 2})
	+(\widehat{b}^T d)^2
	+2\widehat{b}^T d
	+\frac{3}{2}
	$ \\
	\hline
	$ \aroma1101\ \aroma1101\ \aroma3301\ \eatree2101$ &~$-\frac{1}{4}$ &~$-\frac{1}{4}\widehat{b}^T (\delta \diam d^{\diam 2})-\frac{1}{8}$ \\
	\hline
	 &  &~$\frac{1}{2}\widehat{b}^T (d^{\diam 2} \diam \widehat{A}((\ind - \delta) \diam d))
	-\frac{1}{2}\widehat{b}^T (d \diam \widehat{A}(\delta\diam d^{\diam 2}))$ \\[-0.5ex]
	$ \aroma1101\ \aroma3301\ \aroma3301\ \eatree2101$ &~$\frac{7}{4}$ &~$
	+\frac{1}{2}\widehat{b}^T (d \diam \widehat{A}((\ind-\delta)\diam d))
	+\frac{1}{2}\widehat{b}^T (\delta \diam d^{\diam 2})
	+\frac{1}{2}(\widehat{b}^T d)^2
	+\widehat{b}^T d
	+\frac{3}{4}
	$ \\
	\hline
	 &  &~$\widehat{b}^T (d \diam (\widehat{A} d)^{\diam 2})+3 \widehat{b}^T (d \diam \widehat{A} (d \diam \widehat{A} ((\ind-\delta)\diam d)))
	$ \\
	 &  &~$-3\widehat{b}^T (d \diam (\widehat{A}((\ind-\delta) \diam d))^{\diam 2})
	 +(2\widehat{b}^T d+\frac{1}{2})\widehat{b}^T (d \diam \widehat{A}d)
	$ \\
	 &  &~$
	+\frac{1}{2}\widehat{b}^T (d^{\diam 2} \diam \widehat{A}((\ind + \delta) \diam d))
	+\frac{1}{2}\widehat{b}^T (d \diam \widehat{A}(\delta \diam d^{\diam 2}))
	-\frac{9}{2}\widehat{b}^T (\delta \diam d^{\diam 3})
	$ \\[-0.5ex]
	$ \aroma3301\ \aroma3301\ \aroma3301\ \eatree2101$ &~$-\frac{7}{2}$ &~$
	+\frac{1}{2}\widehat{b}^T (d \diam \widehat{A}(\delta \diam d))
	+\frac{15}{4}\widehat{b}^T (\delta \diam d^{\diam 2})
	-(\widehat{b}^T d)^2
	-3\widehat{b}^T d
	-\frac{15}{8}
	$ \\
	\hline
	\end{tabularx}
	\renewcommand\thetable{2 (Part 7/7)}
	\caption{Coefficients in exotic aromatic B-series of the operators~$\LL^2\phi/2=\sum e(\gamma)   F(\gamma)(\phi)$ and~$\AA_1\phi=\sum a(\gamma)   F(\gamma)(\phi)$ for consistent Runge-Kutta methods of the form~\eqref{equation:defRK}.}
	\setcellgapes{1pt}
\end{table}

\renewcommand\thetable{2}

\begin{table}[!htb]
	\setcellgapes{3pt}
	\centering
	\begin{tabularx}{\textwidth}{|c|X|}
	\hline
	$  \gamma$ & \multicolumn{1}{c|}{$a^0(\gamma)$} \\
	\hhline{|=|=|}
	$ \aroma3202\ \eatree3201$ &~$b^T d-\widehat{b}^T d$ \\
	\hline
	$\eatree3002$ &~$b^T c-2b^T d+\frac{1}{2}$ \\
	\hline
	$ \eatree2011$&~$\frac{1}{2} b^T d^{\diam 2}-b^T d+\frac{1}{4}$ \\
	\hline
	$ \eatree4204$&~$\widehat{b}^T (d \diam \widehat{A}c) -2\widehat{b}^T (d \diam \widehat{A}d) -(\widehat{b}^T d)^2+2\widehat{b}^T d-\frac{1}{2}$ \\
	\hline
	$ \eatree4205$&~$\widehat{b}^T A((\delta-\ind)\diam d)) -\widehat{b}^T d b^T d+2\widehat{b}^T d-\frac{1}{2}$ \\
	\hline
	$ \eatree4206$&~$b^T (d \diam \widehat{A}((\delta-\ind)\diam d))$ \\
	\hline
	$ \eatree4207$&~$-\frac{1}{2}b^T (\delta \diam d^{\diam 2})+b^T d-\frac{1}{4}$ \\
	\hline
	$ \eatree4208$&~$\widehat{b}^T (\delta \diam c \diam d) -\widehat{b}^T (c \diam d)+2\widehat{b}^T d^{\diam 2}-2\widehat{b}^T (\delta \diam d^{\diam 2})$ \\
	\hline
	$ \eatree3214$&~$\frac{1}{2}\widehat{b}^T (d \diam \widehat{A}d^{\diam 2})-\widehat{b}^T (d \diam \widehat{A}d)-\frac{1}{2}(\widehat{b}^T d)^2+\widehat{b}^T d-\frac{1}{4}$ \\
	\hline
	 &~$~$ \\[-3ex]
	 &~$-\widehat{b}^T (d \diam (\widehat{A} d)^{\diam 2})-3 \widehat{b}^T (d \diam \widehat{A} (d \diam \widehat{A} ((\ind-\delta)\diam d)))$ \\[-5.5ex]
	$ \eatree5406$ &~$+(4-2\widehat{b}^T d)\widehat{b}^T (d \diam \widehat{A}d)
	+3(\widehat{b}^T d)^2
	-4\widehat{b}^T d
	+1$ \\
	\hline
	 &~$\frac{1}{2}\widehat{b}^T (d \diam \widehat{A}((2\cdot\ind-3\delta)\diam d^{\diam 2}))
	+\widehat{b}^T (d \diam \widehat{A}d)$ \\[-4ex]
	$ \eatree5407$ &~$
	+(\widehat{b}^T d-1)(\widehat{b}^T d^{\diam 2}	-\widehat{b}^T (\delta \diam d^{\diam 2}))
	+\frac{1}{2}(\widehat{b}^T d)^2
	-\widehat{b}^T d
	+\frac{1}{4}$ \\
	\hline
	 &~$-2\widehat{b}^T (d^{\diam 2}\diam \widehat{A}d)
	+2\widehat{b}^T (d \diam \widehat{A}d)$ \\[-4ex]
	$ \eatree5408$ &~$
	+(\widehat{b}^T d-2)\widehat{b}^T d^{\diam 2}
	+(3-\widehat{b}^T d)\widehat{b}^T (\delta \diam d^{\diam 2})
	+(\widehat{b}^T d)^2
	-2\widehat{b}^T d
	+\frac{1}{2}$ \\
	\hline
	$ \eatree3215$ &~$\widehat{b}^T (d^{\diam 2}\diam \widehat{A}d)
	-\widehat{b}^T (d \diam \widehat{A}d)
	+\frac{1}{2}\widehat{b}^T d^{\diam 2}
	-\widehat{b}^T (\delta \diam d^{\diam 2})
	-\frac{1}{2}(\widehat{b}^T d)^2
	+\widehat{b}^T d
	-\frac{1}{4}
	$ \\
	\hline
	$ \eatree3216$ &~$\frac{1}{2}\widehat{b}^T (\delta \diam d^{\diam 3})
	-\frac{1}{2}\widehat{b}^T d^{\diam 3}
	+\widehat{b}^T d^{\diam 2}
	-\widehat{b}^T (\delta \diam d^{\diam 2})$ \\
	\hline
	$\eatree5409$ &~$-\widehat{b}^T d^{\diam 2}
	+\widehat{b}^T (\delta \diam d^{\diam 2})$ \\
	\hline
	$\aroma2101\ \eatree3102$ &~$-\widehat{b}^T c
	+2\widehat{b}^T d
	-\frac{1}{2}$ \\
	\hline
	$\aroma1101\ \eatree3102$ &~$-\frac{1}{2}\widehat{b}^T c
	+\widehat{b}^T d
	-\frac{1}{4}$ \\
	\hline
	\end{tabularx}
	\renewcommand\thetable{3 (Part 1/5)}
	\caption{Coefficients in exotic aromatic B-series of the operator~$\AA_1^0\phi=\sum a^0(\gamma)   F(\gamma)(\phi)$ for consistent Runge-Kutta methods of the form~\eqref{equation:defRK}.}
	\setcellgapes{1pt}
\end{table}

\begin{table}[!htb]
	\setcellgapes{3pt}
	\centering
	\begin{tabularx}{\textwidth}{|c|X|}
	\hline
	$  \gamma$ & \multicolumn{1}{c|}{$a^0(\gamma)$} \\
	\hhline{|=|=|}
	$\aroma3301\ \eatree3102$ &~$\frac{1}{2}\widehat{b}^T c
	-\widehat{b}^T d
	+\frac{1}{4}$ \\
	\hline
	$\aroma2101\ \eatree3103$ &~$-b^T (\delta \diam c)
	+2b^T d
	-\frac{1}{2}$ \\
	\hline
	$ \aroma1101\ \eatree3103$ &~$-\frac{1}{2}b^T (\delta \diam d^{\diam 2})
	+b^T d
	-\frac{1}{4}$ \\
	\hline
	$ \aroma3301\ \eatree3103$ &~$b^T (d \diam \widehat{A}((\ind-\delta)\diam d))
	+\frac{1}{2}b^T (\delta \diam d^{\diam 2})
	-b^T d
	+\frac{1}{4}$ \\
	\hline
	 &~$
	\widehat{b}^T (d \diam \widehat{A}((3\cdot\ind-\delta) \diam d))
	-\widehat{b}^T (d \diam \widehat{A}(\delta \diam c))
	~$ \\[-4ex]
	$\aroma2101\ \eatree4303$ &~$
	+\widehat{b}^T (c \diam \widehat{A}((\ind-\delta)\diam d))
	+2(\widehat{b}^T d)^2
	-4\widehat{b}^T d
	+1$ \\
	\hline
	 &~$\frac{1}{2}\widehat{b}^T (d \diam \widehat{A}((3\cdot\ind-\delta) \diam d))
	-\frac{1}{2}\widehat{b}^T (d \diam \widehat{A}(\delta \diam d^{\diam 2}))$ \\[-4ex]
	$\aroma1101\ \eatree4303$ &~$
	+\frac{1}{2}\widehat{b}^T (d^{\diam 2} \diam \widehat{A}((\ind-\delta)\diam d))
	+(\widehat{b}^T d)^2
	-2\widehat{b}^T d
	+\frac{1}{2}
	$ \\
	\hline
	 &~$\widehat{b}^T (d \diam (\widehat{A} d)^{\diam 2})+3 \widehat{b}^T (d \diam \widehat{A} (d \diam \widehat{A} ((\ind-\delta)\diam d)))
	 +(2\widehat{b}^T d-\frac{11}{2})\widehat{b}^T (d \diam \widehat{A}d)$ \\
	 &~$-3\widehat{b}^T (d \diam (\widehat{A}((\ind-\delta) \diam d))^{\diam 2})
	-\frac{1}{2}\widehat{b}^T (d^{\diam 2} \diam \widehat{A}((\ind-\delta)\diam d))$ \\[-4ex]
	$\aroma3301\ \eatree4303$ &~$
	+\frac{1}{2}\widehat{b}^T (d \diam \widehat{A}(\delta \diam d))
	+\frac{1}{2}\widehat{b}^T (d \diam \widehat{A}(\delta \diam d^{\diam 2}))
	-4(\widehat{b}^T d)^2
	+6\widehat{b}^T d
	-\frac{3}{2}
	$ \\
	\hline
	$ \aroma2101\ \eatree2113$ &~$-\frac{1}{2}\widehat{b}^T d^{\diam 2}
	+\widehat{b}^T d
	-\frac{1}{4}$ \\
	\hline
	$ \aroma1101\ \eatree2113$ &~$-\frac{1}{4}\widehat{b}^T d^{\diam 2}
	+\frac{1}{2}\widehat{b}^T d
	-\frac{1}{8}$ \\
	\hline
	$ \aroma3301\ \eatree2113$ &~$\frac{1}{4}\widehat{b}^T d^{\diam 2}
	-\frac{1}{2}\widehat{b}^T d
	+\frac{1}{8}$ \\
	\hline
	$ \aroma2101\ \eatree4304$ &~$-2\widehat{b}^T d^{\diam 2}
	+\frac{5}{2}\widehat{b}^T (\delta \diam d^{\diam 2})
	-\widehat{b}^T d
	+\frac{1}{4}$ \\
	\hline
	$\aroma1101\ \eatree4304$ &~$
	\frac{5}{4}\widehat{b}^T (\delta \diam d^{\diam 2})
	-\widehat{b}^T d^{\diam 2}
	-\frac{1}{2}\widehat{b}^T d
	+\frac{1}{8}
	$ \\
	\hline
	 &~$
	(\frac{5}{2}-\widehat{b}^T d)\widehat{b}^T d^{\diam 2}
	+(\widehat{b}^T d-\frac{13}{4})\widehat{b}^T (\delta \diam d^{\diam 2})
	~$ \\[-2ex]
	$\aroma3301\ \eatree4304$ &~$
	+\widehat{b}^T (d^{\diam 2}\diam \widehat{A}d)
	-\widehat{b}^T (d \diam \widehat{A}d)
	-\frac{1}{2}(\widehat{b}^T d)^2
	+\frac{3}{2}\widehat{b}^T d
	-\frac{3}{8}
	$ \\
	\hline
	$ \aroma3201\ \eatree3202$ &~$-\widehat{b}^T (d \diam \widehat{A}c)
	+2\widehat{b}^T (d \diam \widehat{A}d)
	+(\widehat{b}^T d)^2
	-2\widehat{b}^T d
	+\frac{1}{2}$ \\
	\hline
	$ \aroma3202\ \eatree3202$ &~$\widehat{b}^T (\delta \diam A((\ind-\delta)\diam d))
	+\widehat{b}^T d b^T d
	-2\widehat{b}^T d
	+\frac{1}{2}$ \\
	\hline
	$ \aroma2201\ \eatree3202$ &~$-\frac{1}{2}\widehat{b}^T (d \diam \widehat{A}d^{\diam 2})
	+\widehat{b}^T (d \diam \widehat{A}d)
	+\frac{1}{2}(\widehat{b}^T d)^2
	-\widehat{b}^T d
	+\frac{1}{4}$ \\
	\hline
	$ \aroma2202\ \eatree3202$ &~$-\widehat{b}^T (d^{\diam 2}\diam \widehat{A}d)
	+\widehat{b}^T (d \diam \widehat{A}d)
	+\frac{1}{2}\widehat{b}^T (\delta \diam d^{\diam 2})
	+\frac{1}{2}(\widehat{b}^T d)^2
	-\widehat{b}^T d
	+\frac{1}{4}$ \\
	\hline
	\end{tabularx}
	\renewcommand\thetable{3 (Part 2/5)}
	\caption{Coefficients in exotic aromatic B-series of the operator~$\AA_1^0\phi=\sum a^0(\gamma)   F(\gamma)(\phi)$ for consistent Runge-Kutta methods of the form~\eqref{equation:defRK}.}
	\setcellgapes{1pt}
\end{table}

\begin{table}[!htb]
	\setcellgapes{3pt}
	\centering
	\begin{tabularx}{\textwidth}{|c|X|}
	\hline
	$  \gamma$ & \multicolumn{1}{c|}{$a^0(\gamma)$} \\
	\hhline{|=|=|}
	 &~$\widehat{b}^T (d \diam (\widehat{A} d)^{\diam 2})+3 \widehat{b}^T (d \diam \widehat{A} (d \diam \widehat{A} ((\ind-\delta)\diam d)))
	+2\widehat{b}^T (d^{\diam 2}\diam \widehat{A}d)$ \\[-2ex]
	$ \aroma4401\ \eatree3202$ &~$
	+(2\widehat{b}^T d-6) \widehat{b}^T (d \diam \widehat{A}d)
	-\widehat{b}^T (\delta \diam d^{\diam 2})
	-4(\widehat{b}^T d)^2
	+6\widehat{b}^T d
	-\frac{3}{2}$ \\
	\hline
	 &~$\frac{1}{2}\widehat{b}^T (d \diam \widehat{A}((3\delta-2\cdot\ind) \diam d^{\diam 2}))
	-\widehat{b}^T (d \diam \widehat{A}d)$ \\[-2ex]
	$ \aroma4402\ \eatree3202$ &~$
	+(1-\widehat{b}^T d)(\widehat{b}^T d^{\diam 2}-\widehat{b}^T (\delta \diam d^{\diam 2}))
	-\frac{1}{2}(\widehat{b}^T d)^2
	+\widehat{b}^T d
	-\frac{1}{4}$ \\
	\hline
	$\aroma2101\ \aroma2101\ \eatree3202$ &~$\widehat{b}^T (\delta \diam c)
	-2\widehat{b}^T d
	+\frac{1}{2}$ \\
	\hline
	$ \aroma2101\ \aroma1101\ \eatree3202$ &~$
	\frac{1}{2}\widehat{b}^T (\delta \diam d^{\diam 2})
	+\frac{1}{2}\widehat{b}^T (\delta \diam c)
	-2\widehat{b}^T d
	+\frac{1}{2}$ \\
	\hline
	 &~$\widehat{b}^T (c \diam \widehat{A}((\delta-\ind) \diam d))
	 +\widehat{b}^T (d \diam \widehat{A}((\delta -3\cdot\ind) \diam d))$ \\[-2ex]
	$ \aroma2101\ \aroma3301\ \eatree3202$ &~$
	 +\widehat{b}^T (d \diam \widehat{A}(\delta \diam c))
	 -\frac{1}{2}\widehat{b}^T (\delta \diam d^{\diam 2})
	-\frac{1}{2}\widehat{b}^T (\delta \diam c)
	-2(\widehat{b}^T d)^2
	+6\widehat{b}^T d
	-\frac{3}{2}$ \\
	\hline
	$ \aroma1101\ \aroma1101\ \eatree3202$ &~$\frac{1}{4}\widehat{b}^T (\delta \diam d^{\diam 2})
	-\frac{1}{2}\widehat{b}^T d
	+\frac{1}{8}$ \\
	\hline
	 &~$\frac{1}{2}\widehat{b}^T (d \diam \widehat{A}((\delta-3\cdot\ind) \diam d))+\frac{1}{2}\widehat{b}^T (d \diam \widehat{A}(\delta \diam d^{\diam 2}))$ \\[-2ex]
	$ \aroma1101\ \aroma3301\ \eatree3202$ &~$+\frac{1}{2}\widehat{b}^T (d^{\diam 2} \diam \widehat{A}((\delta-\ind) \diam d))-\frac{1}{2}\widehat{b}^T (\delta \diam d^{\diam 2})-(\widehat{b}^T d)^2+3\widehat{b}^T d-\frac{3}{4}$ \\
	\hline
	 &~$3\widehat{b}^T (d \diam (\widehat{A}((\ind-\delta) \diam d))^{\diam 2})
	+(\frac{13}{2}-2\widehat{b}^T d)\widehat{b}^T (d \diam \widehat{A}d)~$ \\
	 &~$
	 -\widehat{b}^T (d \diam (\widehat{A} d)^{\diam 2})-3 \widehat{b}^T (d \diam \widehat{A} (d \diam \widehat{A} ((\ind-\delta)\diam d)))
	~$ \\
	 &~$
	-\frac{1}{2}\widehat{b}^T (d^{\diam 2} \diam \widehat{A}((\delta+\ind) \diam d))
	-\frac{1}{2}\widehat{b}^T (d \diam \widehat{A}(\delta \diam d^{\diam 2}))
	+\frac{9}{2}\widehat{b}^T (\delta \diam d^{\diam 3})$ \\[-2ex]
	$ \aroma3301\ \aroma3301\ \eatree3202$ &~$
	-\frac{1}{2}\widehat{b}^T (d \diam \widehat{A}(\delta \diam d))
	 -\frac{15}{4}\widehat{b}^T (\delta \diam d^{\diam 2})
	 +\frac{9}{2}(\widehat{b}^T d)^2
	 -\frac{15}{2}\widehat{b}^T d
	 +\frac{15}{8}
	$ \\
	\hline
	$\aroma3101\ \eatree2101$ &~$-b^T c
	+2\widehat{b}^T d
	-\frac{1}{2}$ \\
	\hline
	$ \aroma2103\ \eatree2101$ &~$-\frac{1}{2}b^T d^{\diam 2}
	+\widehat{b}^T d
	-\frac{1}{4}$ \\
	\hline
	$ \aroma2104\ \eatree2101$ &~$-b^T d
	+\widehat{b}^T d$ \\
	\hline
	$ \aroma4301\ \eatree2101$ &~$-\widehat{b}^T (d \diam \widehat{A}c)
	+2\widehat{b}^T (d \diam \widehat{A}d)
	+(\widehat{b}^T d)^2
	-2\widehat{b}^T d
	+\frac{1}{2}$ \\
	\hline
	$ \aroma4302\ \eatree2101$ &~$b^T (d \diam \widehat{A}((\ind-\delta)\diam d))
	+\widehat{b}^T A((\ind-\delta)\diam d)
	+(\widehat{b}^T d+2)b^T d
	-4\widehat{b}^T d
	+\frac{1}{2}$ \\
	\hline
	$ \aroma4303\ \eatree2101$ &~$\widehat{b}^T (c \diam d)
	-\widehat{b}^T (\delta\diam c \diam d)
	-2\widehat{b}^T d^{\diam 2}
	+2\widehat{b}^T (\delta \diam d^{\diam 2})$ \\
	\hline
	$ \aroma4304\ \eatree2101$ &~$\frac{1}{2}b^T (\delta \diam d^{\diam 2})
	-\widehat{b}^T d
	+\frac{1}{4}$ \\
	\hline
	$ \aroma3302\ \eatree2101$ &~$-\frac{1}{2}\widehat{b}^T (d \diam \widehat{A}d^{\diam 2})
	+\widehat{b}^T (d \diam \widehat{A}d)
	+\frac{1}{2}(\widehat{b}^T d)^2
	-\widehat{b}^T d
	+\frac{1}{4}$ \\
	\hline
	$ \aroma3303\ \eatree2101$ &~$\frac{1}{2}\widehat{b}^T d^{\diam 3}
	-\frac{1}{2}\widehat{b}^T (\delta \diam d^{\diam 3})
	-\widehat{b}^T d^{\diam 2}
	+\widehat{b}^T (\delta \diam d^{\diam 2})$ \\
	\hline
	\end{tabularx}
	\renewcommand\thetable{3 (Part 3/5)}
	\caption{Coefficients in exotic aromatic B-series of the operator~$\AA_1^0\phi=\sum a^0(\gamma)   F(\gamma)(\phi)$ for consistent Runge-Kutta methods of the form~\eqref{equation:defRK}.}
	\setcellgapes{1pt}
\end{table}

\begin{table}[!htb]
	\setcellgapes{3pt}
	\centering
	\begin{tabularx}{\textwidth}{|c|X|}
	\hline
	$  \gamma$ & \multicolumn{1}{c|}{$a^0(\gamma)$} \\
	\hhline{|=|=|}
	$ \aroma3304\ \eatree2101$ &~$-\widehat{b}^T (d^{\diam 2} \diam \widehat{A}d)
	+\widehat{b}^T (d \diam \widehat{A}d)
	-\frac{1}{2}\widehat{b}^T d^{\diam 2}
	+\widehat{b}^T (\delta \diam d^{\diam 2})
	+\frac{1}{2}(\widehat{b}^T d)^2
	-\widehat{b}^T d
	+\frac{1}{4}$ \\
	\hline
	 &~$\widehat{b}^T (d \diam (\widehat{A} d)^{\diam 2})+3 \widehat{b}^T (d \diam \widehat{A} (d \diam \widehat{A} ((\ind-\delta)\diam d)))
	~$ \\[-4ex]
	$ \aroma5501\ \eatree2101$ &~$
	+(2\widehat{b}^T d-4)\widehat{b}^T (d \diam \widehat{A}d)
	-3(\widehat{b}^T d)^2
	+4\widehat{b}^T d
	-1
	$ \\
	\hline
	 &~$\frac{1}{2}\widehat{b}^T (d \diam \widehat{A}((3\delta-2\cdot\ind)\diam d^{\diam 2}))
	 +2\widehat{b}^T (d^{\diam 2}\diam \widehat{A}d)
	 -3\widehat{b}^T (d \diam \widehat{A}d)$ \\[-2ex]
	$ \aroma5502\ \eatree2101$ &~$
	+(3-2\widehat{b}^T d)\widehat{b}^T d^{\diam 2}
	+(2\widehat{b}^T d-4)\widehat{b}^T (\delta \diam d^{\diam 2})
	-\frac{3}{2}(\widehat{b}^T d)^2
	+3\widehat{b}^T d
	-\frac{3}{4}
	$ \\
	\hline
	$ \aroma5503\ \eatree2101$ &~$\widehat{b}^T d^{\diam 2}
	-\widehat{b}^T (\delta \diam d^{\diam 2})$ \\
	\hline
	$\aroma2101\ \aroma3201\ \eatree2101$ &~$\widehat{b}^T c
	-2\widehat{b}^T d
	+\frac{1}{2}$ \\
	\hline
	$ \aroma1101\ \aroma3201\ \eatree2101$ &~$
	\frac{1}{2}\widehat{b}^T c
	-\widehat{b}^T d
	+\frac{1}{4}
	$ \\
	\hline
	$ \aroma3301\ \aroma3201\ \eatree2101$ &~$
	\widehat{b}^T (d \diam \widehat{A}c)
	-2\widehat{b}^T (d \diam \widehat{A}d)
	-\frac{1}{2}\widehat{b}^T c
	-(\widehat{b}^T d)^2
	+3\widehat{b}^T d
	-\frac{3}{4}
	$ \\
	\hline
	$\aroma2101\ \aroma3202\ \eatree2101$ &~$b^T (\delta \diam c)
	-2\widehat{b}^T d
	+\frac{1}{2}$ \\
	\hline
	$\aroma1101\ \aroma3202\ \eatree2101$ &~$\frac{1}{2} b^T (\delta \diam d^{\diam 2})
	-\widehat{b}^T d
	+\frac{1}{4}$ \\
	\hline
	 &~$
	b^T (d \diam \widehat{A}((\delta-\ind) \diam d))
	+\widehat{b}^T (\delta \diam A((\delta-\ind) \diam d))
	~$ \\[-0.5ex]
	$ \aroma3301\ \aroma3202\ \eatree2101$ &~$
	-\frac{1}{2} b^T (\delta \diam d^{\diam 2})
	-(\widehat{b}^T d+1)b^T d
	+4\widehat{b}^T d
	-\frac{3}{4}
	$ \\
	\hline
	$ \aroma2101\ \aroma2201\ \eatree2101$ &~$\frac{1}{2}\widehat{b}^T d^{\diam 2}
	-\widehat{b}^T d
	+\frac{1}{4}$ \\
	\hline
	$ \aroma1101\ \aroma2201\ \eatree2101$ &~$\frac{1}{4}\widehat{b}^T d^{\diam 2}
	-\frac{1}{2}\widehat{b}^T d
	+\frac{1}{8}$ \\
	\hline
	$ \aroma3301\ \aroma2201\ \eatree2101$ &~$\frac{1}{2}\widehat{b}^T (d \diam \widehat{A}d^{\diam 2})
	-\widehat{b}^T (d \diam \widehat{A}d)	
	-\frac{1}{4}\widehat{b}^T d^{\diam 2}
	-\frac{1}{2}(\widehat{b}^T d)^2
	+\frac{3}{2}\widehat{b}^T d
	-\frac{3}{8}$ \\
	\hline
	$ \aroma3301\ \aroma2202\ \eatree2101$ &~$\widehat{b}^T (d^{\diam 2}\diam \widehat{A}d)
	-\widehat{b}^T (d \diam \widehat{A}d)
	-\frac{1}{2}\widehat{b}^T (\delta \diam d^{\diam 2})
	-\frac{1}{2}(\widehat{b}^T d)^2
	+\widehat{b}^T d
	-\frac{1}{4}$ \\
	\hline
	 &~$\widehat{b}^T (c \diam \widehat{A}((\delta-\ind) \diam d))+\widehat{b}^T (d \diam \widehat{A}(\delta \diam c))$ \\[-2ex]
	$ \aroma2101\ \aroma4401\ \eatree2101$ &~$
	+\widehat{b}^T (d \diam \widehat{A}((\delta -3\cdot\ind) \diam d))
	-2(\widehat{b}^T d)^2
	+4\widehat{b}^T d
	-1
	$ \\
	\hline
	 &~$\frac{1}{2}\widehat{b}^T (d^{\diam 2} \diam \widehat{A}((\delta-\ind) \diam d))
	+\frac{1}{2}\widehat{b}^T (d \diam \widehat{A}(\delta\diam d^{\diam 2}))
	$ \\[-2ex]
	$ \aroma1101\ \aroma4401\ \eatree2101$ &~$+\frac{1}{2}\widehat{b}^T (d \diam \widehat{A}((\delta -3\cdot\ind) \diam d))
	-(\widehat{b}^T d)^2
	+2\widehat{b}^T d
	-\frac{1}{2}
	$ \\
	\hline
	 &~$3\widehat{b}^T (d \diam (\widehat{A}((\ind-\delta) \diam d))^{\diam 2})
	-\frac{1}{2}\widehat{b}^T (d^{\diam 2} \diam \widehat{A}((3\cdot\ind +\delta) \diam d))
	$ \\
	 &~$-2\widehat{b}^T (d \diam (\widehat{A} d)^{\diam 2})-6 \widehat{b}^T (d \diam \widehat{A} (d \diam \widehat{A} ((\ind-\delta)\diam d)))
	+(\frac{23}{2}-4\widehat{b}^T d)\widehat{b}^T (d \diam \widehat{A}d)
	$ \\[-2ex]
	$ \aroma3301\ \aroma4401\ \eatree2101$ &~$
	-\frac{1}{2}\widehat{b}^T (d \diam \widehat{A}(\delta \diam d))
	-\frac{1}{2}\widehat{b}^T (d \diam \widehat{A}(\delta\diam d^{\diam 2}))
	+\widehat{b}^T (\delta \diam d^{\diam 2})
	+8(\widehat{b}^T d)^2
	-12\widehat{b}^T d
	+3
	$ \\
	\hline
	$ \aroma2101\ \aroma4402\ \eatree2101$ &~$
	2\widehat{b}^T d^{\diam 2}
	-\frac{5}{2}\widehat{b}^T (\delta \diam d^{\diam 2})
	+\widehat{b}^T d
	-\frac{1}{4}$ \\
	\hline
	$ \aroma1101\ \aroma4402\ \eatree2101$ &~$
	\widehat{b}^T d^{\diam 2}
	-\frac{5}{4}\widehat{b}^T (\delta \diam d^{\diam 2})
	+\frac{1}{2}\widehat{b}^T d
	-\frac{1}{8}$ \\
	\hline
	\end{tabularx}
	\renewcommand\thetable{3 (Part 4/5)}
	\caption{Coefficients in exotic aromatic B-series of the operator~$\AA_1^0\phi=\sum a^0(\gamma)   F(\gamma)(\phi)$ for consistent Runge-Kutta methods of the form~\eqref{equation:defRK}.}
	\setcellgapes{1pt}
\end{table}

\begin{table}[!htb]
	\setcellgapes{3pt}
	\centering
	\begin{tabularx}{\textwidth}{|c|X|}
	\hline
	$  \gamma$ & \multicolumn{1}{c|}{$a^0(\gamma)$} \\
	\hhline{|=|=|}
	 &~$\frac{1}{2}\widehat{b}^T (d \diam \widehat{A}((2\cdot\ind -3 \delta) \diam d^{\diam 2}))
	-\widehat{b}^T (d^{\diam 2}\diam \widehat{A}d)
	+2\widehat{b}^T (d \diam \widehat{A}d)$ \\[-0.5ex]
	$ \aroma3301\ \aroma4402\ \eatree2101$ &~$
	+(2\widehat{b}^T d-\frac{7}{2})\widehat{b}^T d^{\diam 2}
	+(\frac{17}{4}-2\widehat{b}^T d)\widehat{b}^T (\delta \diam d^{\diam 2})
	+(\widehat{b}^T d)^2
	-\frac{5}{2}\widehat{b}^T d
	+\frac{5}{8}$ \\
	\hline
	$\aroma2101\ \aroma2101\ \aroma3301\ \eatree2101$ &~$-\widehat{b}^T (\delta \diam c)
	+2\widehat{b}^T d
	-\frac{1}{2}$ \\
	\hline
	$ \aroma2101\ \aroma1101\ \aroma3301\ \eatree2101$ &~$
	-\frac{1}{2}\widehat{b}^T (\delta \diam c)
	-\frac{1}{2}\widehat{b}^T (\delta \diam d^{\diam 2})
	+2\widehat{b}^T d
	-\frac{1}{2}
	$ \\
	\hline
	 &~$
	\widehat{b}^T (c \diam \widehat{A}((\ind-\delta)\diam d))
	+\widehat{b}^T (d \diam \widehat{A}((3\cdot\ind-\delta) \diam d))
	$ \\[-0.5ex]
	$ \aroma2101\ \aroma3301\ \aroma3301\ \eatree2101$ &~$
	-\widehat{b}^T (d \diam \widehat{A}(\delta \diam c))
	+\frac{1}{2}\widehat{b}^T (\delta \diam c)
	+\frac{1}{2}\widehat{b}^T (\delta \diam d^{\diam 2})
	+2(\widehat{b}^T d)^2
	-6\widehat{b}^T d
	+\frac{3}{2}
	$ \\
	\hline
	$ \aroma1101\ \aroma1101\ \aroma3301\ \eatree2101$ &~$
	-\frac{1}{4}\widehat{b}^T (\delta \diam d^{\diam 2})
	+\frac{1}{2}\widehat{b}^T d
	-\frac{1}{8}
	$ \\
	\hline
	 &~$\frac{1}{2}\widehat{b}^T (d^{\diam 2} \diam \widehat{A}((\ind - \delta) \diam d))
	-\frac{1}{2}\widehat{b}^T (d \diam \widehat{A}(\delta\diam d^{\diam 2}))
	$ \\[-0.5ex]
	$ \aroma1101\ \aroma3301\ \aroma3301\ \eatree2101$ &~$
	+\frac{1}{2}\widehat{b}^T (d \diam \widehat{A}((3\cdot\ind-\delta) \diam d))
	+\frac{1}{2}\widehat{b}^T (\delta \diam d^{\diam 2})
	+(\widehat{b}^T d)^2
	-3\widehat{b}^T d
	+\frac{3}{4}
	$ \\
	\hline
	 &~$\widehat{b}^T (d \diam (\widehat{A} d)^{\diam 2})+3 \widehat{b}^T (d \diam \widehat{A} (d \diam \widehat{A} ((\ind-\delta)\diam d)))
	$ \\
	 &~$-3\widehat{b}^T (d \diam (\widehat{A}((\ind-\delta) \diam d))^{\diam 2})
	+\frac{1}{2}\widehat{b}^T (d \diam \widehat{A}(\delta \diam d))
	$ \\
	 &~$
	+(2\widehat{b}^T d-\frac{13}{2})\widehat{b}^T (d \diam \widehat{A}d)
	+\frac{1}{2}\widehat{b}^T (d^{\diam 2} \diam \widehat{A}((\ind + \delta) \diam d))
	$ \\[-0.5ex]
	$ \aroma3301\ \aroma3301\ \aroma3301\ \eatree2101$ &~$
	+\frac{1}{2}\widehat{b}^T (d \diam \widehat{A}(\delta \diam d^{\diam 2}))	
	-\frac{9}{2}\widehat{b}^T (\delta \diam d^{\diam 3})
	+\frac{15}{4}\widehat{b}^T (\delta \diam d^{\diam 2})
	-\frac{9}{2}(\widehat{b}^T d)^2
	+\frac{15}{2}\widehat{b}^T d
	-\frac{15}{8}
	$ \\
	\hline
	\end{tabularx}
	\renewcommand\thetable{3 (Part 5/5)}
	\caption{Coefficients in exotic aromatic B-series of the operator~$\AA_1^0\phi=\sum a^0(\gamma)   F(\gamma)(\phi)$ for consistent Runge-Kutta methods of the form~\eqref{equation:defRK}.}
	\setcellgapes{1pt}
\end{table}

\end{appendices}

\end{document}